\documentclass{article}

\usepackage{amsmath}
  \usepackage{paralist}
 \usepackage[colorlinks=true]{hyperref}
\hypersetup{urlcolor=blue, citecolor=red}

  \textheight=8.2 true in
   \textwidth=5.0 true in
    \topmargin 30pt
     \setcounter{page}{1}

\usepackage{amssymb,amsthm,graphicx,stmaryrd,psfrag,pstricks,pslatex,rotating,float}
\usepackage[english]{babel}



\newtheoremstyle{drem}
     {3pt}
     {3pt}
     {\rmfamily}
     {}
     {\itshape}
     {:}
     {.5em}
     {}

\newtheorem{teo}{Theorem}[section]
\newtheorem{lem}[teo]{Lemma}
\newtheorem{prop}[teo]{Proposition}
\newtheorem{cor}[teo]{Corollary}
\newtheorem{defi}[teo]{{\rm Definition}}
\newtheorem{ques}[teo]{{\rm Question}}
\theoremstyle{drem}
\newtheorem{rem}[teo]{Remark}
\newtheorem{ex}[teo]{Example}

\setcounter{equation}{0}

\let\oldmarginpar\marginpar
\renewcommand\marginpar[1]{\-\oldmarginpar[\raggedleft\footnotesize #1]%
{\raggedright\footnotesize #1}}

\def\og{\leavevmode\raise.3ex\hbox{$\scriptscriptstyle\langle\!\langle$~}}
\def\fg{\leavevmode\raise.3ex\hbox{~$\!\scriptscriptstyle\,\rangle\!\rangle$}}

\def\vs{\vrule width 0cm height 0.1in depth 0in}
\newcommand{\fr}[2]{\frac{\displaystyle\vs #1}{\displaystyle\vs #2}}

\newcommand{\ie}[0]{\emph{i.e. }}
\newcommand{\eg}[0]{\emph{e.g. }}
\newcommand{\cf}[0]{\emph{cf. }}


\newcommand{\eps}[0]{\epsilon}

\newcommand{\bv}[0]{\! \big|}

\newcommand{\vide}[0]{\varnothing}
\renewcommand{\setminus}{\smallsetminus}

\newcommand{\un}[0]{1 \!\! \mathrm{l}} 
\newcommand{\del}[0]{\partial \!}

\newcommand{\rr}[0]{\mathbb{R}}
\newcommand{\zz}[0]{\mathbb{Z}}
\newcommand{\nn}[0]{\mathbb{N}}
\newcommand{\kk}[0]{\mathbb{K}}

\newcommand{\srl}[1]{\overline{#1}}


\newcommand{\jo}[1]{\mathcal{#1}}

\newcommand{\limm}[1]{\textrm{\raisebox{.5ex}{\mbox{$\underset{#1}{\lim}$}}} \:}
\newcommand{\lims}[1]{\textrm{\raisebox{.5ex}{\mbox{$\underset{#1}{\limsup}$}}} \:}

\newcommand{\maxx}[1]{\textrm{\raisebox{.5ex}{\mbox{$\underset{#1}{\max}$}}} \:}
\newcommand{\minn}[1]{\textrm{\raisebox{.5ex}{\mbox{$\underset{#1}{\min}$}}} \:}
\newcommand{\supp}[1]{\textrm{\raisebox{.5ex}{\mbox{$\underset{#1}{\sup}$}}} \:}
\newcommand{\inff}[1]{\textrm{\raisebox{.5ex}{\mbox{$\underset{#1}{\inf}$}}} \:}
\newlength{\entrh}   

\newcommand{\somme}[2]{\overset{#2}{\underset{#1}{\sum}}}
\newcommand{\Somme}[2]{\overset{#2}{\underset{#1}{\displaystyle\vs \sum}}}

\newcommand{\union}[2]{\overset{#2}{\underset{#1}{\cup}}}

\newcommand{\nr}[1]{\left\| #1 \right\|}
\newcommand{\pnr}[1]{\| #1 \|}

\newcommand{\gen}[1]{\left\langle #1 \right\rangle}
\newcommand{\pgen}[1]{\langle #1 \rangle}

\newcommand{\ssi}[0]{\Leftrightarrow}
\newcommand{\imp}[0]{\Rightarrow}
\newcommand{\inj}[0]{\hookrightarrow}

\newcommand{\diam}[0]{\mathrm{Diam}\,}
\newcommand{\tr}[0]{\mathrm{Tr}\,}
\newcommand{\img}[0]{\mathrm{Im}\,}
\newcommand{\Id}[0]{\mathrm{Id}}

\newcommand{\homo}[0]{\mathrm{Hom}}

\newcommand{\IN}[0]{\mathrm{int}}
\newcommand{\FE}[0]{\mathrm{clo}}

\newcommand{\wdm}[0]{\mathrm{wdim} \,}
\newcommand{\ldm}[0]{\mathrm{ldim} \,}

\newcommand{\ev}[0]{\, e \! v}

\newcommand{\dlp}[0]{\mathrm{dim}_{\ell^p}}
\newcommand{\dl}[1]{\mathrm{dim}_{\ell^{#1}}}

\newcommand{\ddlp}[0]{\mathrm{dim}^*_{\ell^p}}
\newcommand{\ddl}[1]{\mathrm{dim}^*_{\ell^{#1}}}

\newcommand{\mydim}[0]{\mathrm{dim}_\mathrm{idl}}
\newcommand{\mysp}[0]{\mathfrak{X}}

\newcommand{\pn}[1]{\llparenthesis #1 \rrparenthesis}

\newcommand{\pnc}{\llparenthesis \cdot \rrparenthesis}

\newcommand{\yper}[0]{\Pi}
\newcommand{\yperf}[0]{\overline{\Pi}}

\newcommand{\comp}[0]{\mathsf{c}}

\newcommand{\perpud}[0]{{\, \textrm{\mbox{\raisebox{.7ex}{\:\begin{rotate}{180}\makebox(0,0){$\perp$}\end{rotate}}}} \,}}

\newcommand{\maz}[0]{\mu_\circ}


%
%
%
%


\title{Further properties of $\ell^p$ dimension\footnote{Keywords: Von Neumann dimension, $\ell^p$ dimension, amenable groups, invariant subspaces. Primary: 37A35, 43A65, 70G60; Secondary: 37C45, 41A46, 43A15, 46E30.}}

\author{Antoine Gournay}

\setlength{\hoffset}{-1cm}
\setlength{\oddsidemargin}{1cm}
\setlength{\evensidemargin}{1cm}
\setlength{\textwidth}{17cm}

\begin{document}
\maketitle

\centerline{\scshape Antoine Gournay }
\medskip
{\footnotesize
 \centerline{ Universit\'e de Neuch\^atel,}
   \centerline{Rue \'E.-Argand, 11,}
   \centerline{2000 Neuch\^atel, Switzerland}
} 

\medskip



\begin{abstract}
This article establishes more properties of the $\ell^p$ dimension introduced in previous article. That is given an amenable group $\Gamma$ acting by translation on $\ell^p(\Gamma)$, a number satisfying dimension-like properties is associated to the (usually infinite dimensional) subspaces $Y$ of $\ell^p(G)$ which are invariant under the action of $G$. As a consequence, for $p\in [1,2]$, if $Y$ is a closed non-trivial $\Gamma$-invariant subspace of $\ell^p(\Gamma;V)$ and let $Y_n$ is an increasing sequence of closed $\Gamma$-invariant subspace such that $\srl{\cup Y_n} = \ell^p(\Gamma;V)$, then there exist a $k$ such that $Y_k \cap Y \neq \{0\}$.
\end{abstract}

\section{Introduction}\label{sintro}

\par This article continues previous work where an attempt was made to produce an $\ell^p$ version of the von Neumann dimension. To present things in their simplest incarnations, let $V$ be a finite dimensional normed vector space (over a field $\kk$, typically $\mathbb{C}$ or $\rr$) and $\Gamma$ a countable (infinite) amenable group (\eg any Abelian group, such as $\zz$). Let $\ell^p(\Gamma;V)$ be the Banach space of maps $f:\Gamma \to V$ such that $\sum_{\gamma \in \Gamma} \|f(\gamma)\|^p$ is finite (\eg if $\Gamma = \zz$, then these are double-ended $p$-summable sequences). There are also two other so-called classical Banach spaces of interest here: $\ell^\infty(\Gamma;V)$, the space of bounded functions, and $c_0(\Gamma;V)$ the space of bounded functions decreasing to $0$ at infinity (more precisely, $f \in c_0(\Gamma;V)$ if for any increasing sequence of finite sets $F_i$ with $\cup F_i = \Gamma$, the associated sequence of real numbers $\supp{\gamma \notin F_i} \| f(\gamma)\|$ tends to $0$).

\par There is a natural of $\Gamma$ action on $\ell^p(\Gamma;V)$ given by translation: let $f \in \ell^p(\Gamma;V)$ then $(\gamma \cdot f)(\gamma') = f(\gamma^{-1} \gamma')$. The subject matter of this article are the linear subspaces $X$ of $\ell^p(\Gamma;V)$ such that $X$ is invariant under the action of $\Gamma$: $\forall \gamma \in \Gamma, \gamma X = X$. The goal is to show that it is possible to associate to such linear subspaces a dimension, that is, a positive  real number (most of the time not an integer) which behaves nicely under common operations of vector spaces (see the list of properties in section \ref{sprop}). Though not crucial to the work, it will also be assumed that $\Gamma$ is finitely generated.

\par This work is motivated by problems in the $\ell^p$ cohomology of groups, and, in particular, the following question (due to D.~Gaboriau):

\begin{ques}\label{laquestion}
Let $Y$ be a closed non-trivial $\Gamma$-invariant subspace of $\ell^p(\Gamma;V)$ and let $Y_n$ be an increasing sequence of closed $\Gamma$-invariant subspace such that $\srl{\cup Y_n} = \ell^p(\Gamma;V)$. Does there exist a $k$ such that $Y_k \cap Y \neq \{0\}$?
\end{ques}

\par It is quite easy to solve this question positively (\ie such a $k$ does exist) if $p=2$. The arguments known to the author however involve consciously or not the use of von Neumann dimension, a Hilbertian concept. As a simple consequence to the properties of $\dlp$, this question admits a positive answer for $p \in [1,2]$.

\begin{teo}\label{lareponse}
Let $p \in [1,2]$. Let $Y$ be a closed non-trivial $\Gamma$-invariant subspace of $\ell^p(\Gamma;V)$ and let $Y_n$ be an increasing sequence of closed  $\Gamma$-invariant subspace such that $\srl{\cup Y_n} = \ell^p(\Gamma;V)$. There exist $k \in \nn$ such that $Y_k \cap Y$ is a non-trivial $\Gamma$-invariant linear subspace (of positive $\ell^p$-dimension and infinite usual dimension).
\end{teo}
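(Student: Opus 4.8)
The plan is to transport to every $p \in [1,2]$ the Hilbertian argument available at $p=2$, replacing von Neumann dimension by $\dlp$ and invoking only the formal properties gathered in Section \ref{sprop}. I would argue by contradiction: assume $Y_k \cap Y = \{0\}$ for all $k \in \nn$ and derive that $Y$ must be trivial.

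The quantitative inputs are three. By normalization $\dlp \ell^p(\Gamma;V) = \dim V < \infty$; since $Y_n$ increases with $\srl{\cup Y_n} = \ell^p(\Gamma;V)$, continuity of $\dlp$ from below gives $\dlp Y_n \to \dim V$; and faithfulness gives $\dlp Y > 0$ because $Y$ is non-trivial, closed and $\Gamma$-invariant. Under the contradiction hypothesis $\dlp(Y_k \cap Y) = 0$, so the inclusion--exclusion inequality for $\dlp$ yields
$$\dlp Y_k + \dlp Y \le \dlp\srl{Y_k + Y} + \dlp(Y_k \cap Y) = \dlp\srl{Y_k+Y} \le \dim V ,$$
the final step by monotonicity since $\srl{Y_k+Y} \subseteq \ell^p(\Gamma;V)$. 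Letting $k \to \infty$ gives $\dim V + \dlp Y \le \dim V$, hence $\dlp Y \le 0$, a contradiction. So some $Y_k \cap Y \neq \{0\}$.

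This $Y_k \cap Y$, as an intersection of closed $\Gamma$-invariant subspaces, is again closed and $\Gamma$-invariant, and being non-trivial it has positive $\dlp$ by faithfulness. That it has infinite usual dimension follows from $\Gamma$ being infinite: the translation representation on $\ell^p(\Gamma;V)$ has no nonzero finite-dimensional invariant subspace (a nonzero element would obey a finite linear recurrence among its translates, incompatible with $p$-summability over an infinite group), so a non-trivial closed invariant subspace cannot be finite dimensional.

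The assembly above is routine; the real weight sits in the two properties it uses, namely continuity of $\dlp$ from below along increasing sequences with dense union and the inclusion--exclusion inequality $\dlp A + \dlp B \le \dlp\srl{A+B} + \dlp(A\cap B)$. These are precisely where $p \in [1,2]$ should be indispensable, entering through the norm-decreasing inclusion $\ell^p(\Gamma;V) \hookrightarrow \ell^2(\Gamma;V)$ (valid for $p \le 2$) that allows one to compare $\dlp$ with the $\ell^2$ theory. I expect the main obstacle to be establishing these structural facts --- in particular the supermodular direction of inclusion--exclusion and the closedness issues hidden in passing to $\srl{Y_k+Y}$ --- after which Theorem \ref{lareponse} follows as stated.
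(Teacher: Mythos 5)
Your reduction has the right overall shape and your use of P1, P7 and P9' matches the paper's proof, but the argument hinges on an inclusion--exclusion inequality $\dlp A + \dlp B \leq \dlp \srl{A+B} + \dlp (A \cap B)$ that appears nowhere among the properties P1--P10 established in the paper, and which you do not prove --- you explicitly defer it as an expected structural fact. That deferral is precisely the gap: no supermodularity of $\dlp$ is available, and in the generality you state it, it is not clear how to get it. What saves the day is that you only ever need the special case $Y_k \cap Y = \{0\}$, and in that case the inequality $\dlp Y_k + \dlp Y \leq \dim_\kk V$ follows from the established properties by a different mechanism, which is exactly the paper's proof: form the \emph{external} direct sum $Y_k \oplus Y \subset \ell^p(\Gamma; V \oplus V)$, so that additivity P6 (theorem \ref{tadd}) gives $\dlp (Y_k \oplus Y) = \dlp Y_k + \dlp Y$; then the sum map $(x,y) \mapsto x+y$ into $\ell^p(\Gamma;V)$ is $\Gamma$-equivariant, of finite type, continuous (since $Y_k$ and $Y$ are closed) and injective \emph{precisely because} $Y_k \cap Y = \{0\}$, whence invariance P2 and normalization P1 yield $\dlp Y_k + \dlp Y \leq \dim_\kk V$. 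Choosing $k$ with $\dlp Y_k > \dim_\kk V - \dlp Y/2$ via P9' gives the contradiction, and the internal sum $\srl{Y_k + Y}$ --- with all the closedness issues you flag --- never needs to be mentioned. So your skeleton is repairable, but the step you label as ``the real weight'' is not an open structural lemma to be established later; it is replaced, in the only case needed, by P6 combined with P2.

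Two smaller corrections. First, you misplace where $p \in [1,2]$ enters: the direct-sum-plus-invariance surrogate for inclusion--exclusion holds for \emph{all} $p \in [1,\infty[$ (see theorem \ref{propvrai}); the restriction $p \leq 2$ is forced by positivity P7 (theorem \ref{tpos}), which you invoke as ``faithfulness'' and which provably \emph{fails} for $p>2$ (remark \ref{paspos}), and by left-continuity P9' (proposition \ref{tcontlp}). The comparison with the $\ell^2$ theory does occur, but inside the proofs of P7 and P9' (via $\|\cdot\|_{\ell^2} \leq \|\cdot\|_{\ell^p}$ for $p \leq 2$, as in lemma \ref{tbornker}), not in the inclusion--exclusion step. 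Second, your ``finite linear recurrence'' argument for infinite usual dimension is really specific to $\Gamma = \zz$; for a general infinite $\Gamma$ argue instead that in a finite-dimensional invariant subspace the translates $\gamma y$ of a nonzero $y$ all lie on the compact sphere of radius $\|y\|_{\ell^p}$, so some sequence $\gamma_n y$ with $\gamma_n \to \infty$ converges in norm to a limit of norm $\|y\| > 0$, while norm convergence implies pointwise convergence and $y(\gamma_n^{-1}\gamma) \to 0$ for each fixed $\gamma$ since $y \in c_0$, forcing the limit to be $0$, a contradiction.
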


\par This result should make a proof of the vanishing of the reduced $\ell^p$ cohomology, $p \in ]1,2[$ for amenable groups more tractable, keeping in mind  Gaboriau's proof in the $\ell^2$ setting, \cite{Gab}. Remember that the reduced $\ell^1$ cohomology is non-trivial, even for $\Gamma = \zz$, see \cite[Example 3 in \S{}4]{MV}. However, the question \ref{laquestion} admits a very straightforward proof for any discrete group if $p=1$, see remark \ref{repl1}. 
%

\par This article starts by presenting the desired properties of an $\ell^p$ dimension and how those established yield theorem \ref{lareponse}. Preliminary results on amenable groups and the definition of the $\ell^p$ dimension are given in section \ref{sprel}. Section \ref{sdual} describes the dual $\ell^p$ dimension and how its properties can be inferred from any pre-existing notion of $\ell^p$ dimension (\ie another definition of $\ell^p$ dimension could very well be used in this section). The remaining sections concentrate on the proofs of the various properties. 
Section \ref{spos} gives more explicit lower bounds for positivity, given that $p \in [1,2]$, which allow, in section \ref{scont}, to establish one of the continuity properties. Completion and maximality are also discussed in section \ref{scont}. Section \ref{sadd} establishes additivity, reduction and reciprocity for any $p$. Section \ref{sinv} presents some results on invariance. 
Section \ref{sfin} gives some concluding remarks and questions. Out of concern for completion, some of the proofs are presented again here (though they offer either different point of view, corrections or sometimes refinement of the previous results). 

\par As the author was preparing this work, G.\'Elek mentioned to him the work of B.Hayes \cite{Hay} which also defined a $\ell^p$ dimension for a (much) more general class of groups known as sofic groups and shows more properties than in \cite{moi-lp}. However, neither the properties presented in the present text nor the methods seem to be completely redundant, even for amenable groups. It might not be completely trivial to check that the two definitions are equal (see paragraph after question \ref{q:egal}). There is also an article of D.~Voiculescu \cite{Voi} which gives an entropy-like approach (but only in the Hilbertian case).


\emph{Apology:} There is a mistake in \cite[Corollary 3.9]{moi-lp}. Two ingredients are used in this the proof which are not mentioned in the statement: the image has to be closed (so that the inverse is bounded), and $\ell^\infty$ should be excluded (so that functions of finite support are norm-dense). In the said the paper, invariance is done for maps of finite type and closed image and the ambient space should differ from $\ell^\infty$. See example \ref{cexinv} for a counter-example in $\ell^\infty$ and theorem \ref{tinv-new} for a proof of P2 which works under less restrictive hypothesis.

\emph{Acknowledgements:} Conversations with G.~\'Elek, D.~Gaboriau, B.~Hayes, B.~Nica, and N.~Raymond were benefical to this work and they are accordingly warmly thanked.

\section{Properties} \label{sprop}

\par All properties listed in this section should be read as tentative. They are a wish-list for the properties of an ideal dimension (following the work of Cheeger and Gromov \cite{CG}, and keeping in mind question \ref{laquestion}). The current state of these properties is summarized in theorem \ref{propvrai}. In the following statements $\mysp$ is a (unspecified among the classical Banach spaces) space of functions from $\Gamma$ to $V$ endowed with a ($\Gamma$-invariant) norm and $\mydim$ is a map from $\Gamma$-invariant subspaces of $\mysp$ in the positive real numbers.
\begin{enumerate}\renewcommand{\labelenumi}{{\normalfont P\arabic{enumi}}}
\item (Normalization) $\mydim \mysp(\Gamma;V) =\dim_\kk V$;
\item (Invariance) If $f:Y_1 \to Y_2$ is an injective $\Gamma$-equivariant linear (continuous) map of finite type, then $\mydim Y_1 \leq \mydim Y_2$;
\item (Completion) If $\srl{Y}$ is the norm completion of $Y$ in $\mysp(\Gamma;V)$, then $\mydim Y =\mydim \srl{Y}$;
\item (Reduction) If $\Gamma_1 \subset \Gamma_2$ is a subgroup of finite index, and if $Y \subset \mysp(\Gamma_2;V)$ is seen by restriction as a subspace of $\mysp(\Gamma_1;V^{[\Gamma_2:\Gamma_1]})$ then $[\Gamma_2:\Gamma_1] \mydim (Y,\Gamma_2) = \mydim (Y,\Gamma_1)$;
\item (Reciprocity) If $\Gamma_1 \subset \Gamma_2$ is a subgroup of finite index and if $Y_2 \subset \mysp(\Gamma_2;V)$ is the subspace induced by $Y_1 \subset \mysp(\Gamma_1;V)$ then $\mydim (Y_2,\Gamma_2) = \mydim (Y_1,\Gamma_1)$.
\item (Additivity) $\mydim Y_1 \oplus Y_2 = \mydim Y_1 + \mydim Y_2$;
\item (Positivity) $Y \subset \mysp$ is trivial if and only if $\mydim Y =0$.
\item (Right-continuity) If $\{Y_i\}$ is a decreasing sequence of closed linear subspaces then $\mydim (\cap Y_i) = \limm{i \to \infty} \mydim Y_i$;
\item (Left-continuity) Let $Y_n$ be an increasing sequence of subspaces and let $Y=\overline{\cup Y_n}$ then $\limm{n \to \infty} \mydim Y_n = \mydim Y$.
\item (Maximality) If $Y\subset \mysp(\Gamma;V)$ is a closed subspace and $\mydim Y = \dim_\kk V$ then $Y$ is of finite codimension in $\mysp(\Gamma;V)$.
\end{enumerate}
\par The interest of the last property comes from the dual $\ell^p$ dimension, see section \ref{sdual}. Here, finite codimension means that the vector space $\mysp / Y$ is of finite dimension. Maps of finite type (in P2) are defined in definition \ref{dtype}, and the induced subspace (in P5) is defined in  definition \ref{dinduc}. Note that the fininiteness of codimension is superfluous in \emph{P10} when $p\in ]1,\infty[$ or in $c_0$. Indeed, is a closed space has finite codimension then $Y^\perp$ is finite dimensional (which is impossible unless it is trivial).

\par In view of Question \ref{laquestion}, it seems that P8 (right-continuity) is a stronger type of continuity than what is required. If one thinks of spaces ordered by inclusion the smallest being on the ``left'', then P8 amounts to for continuity on the right. However, the right type of continuity to answer question \ref{laquestion} is left-continuity. Actually, weaker versions of continuity will be sufficient, namely

\begin{enumerate}\renewcommand{\labelenumi}{{\normalfont P\arabic{enumi}'}}\setcounter{enumi}{7}
\item (Right-continuity at $\{0\}$)  Let $Y_n$ be an decreasing sequence of closed subspaces such that $\cap Y_n = \{0\}$ then $\limm{n \to \infty} \mydim Y_n = 0$. 

\item (Left-continuity at $\mysp$) Let $Y_n$ be an increasing sequence of subspaces such that $\overline{\cup Y_n} = \mysp(\Gamma;V)$ then $\limm{n \to \infty} \mydim Y_n = \dim_\kk V$.

\item (Weak$^*$-maximality) If $Y\subset \mysp(\Gamma;V)$ is a weak$^*$-closed subspace and $\mydim Y = \dim_\kk V$ then $Y$ is $\mysp(\Gamma;V)$.
\end{enumerate}

\par The last property will come out naturally when looking at the dual version of a dimension. Note that it is superfluous in the case $p\in ]1,\infty[$ or $c_0$. Indeed, is a closed space has finite codimension then so does its dual, but $(\ell^p /Y)^* = Y^\perp$ which cannot be finite dimensional unless it is trivial. It is quite standard that all these properties hold for $\mysp = \ell^2$ and $\mydim = \dl{2}$ (as it is equal to the von Neumann dimension, see \cite[Corollary A.2]{moi-lp}). Here are the main statements of this article.

\begin{teo}\label{propvrai}
In the statements above take $\mydim$ to be $\dlp$ and $\mysp$ to be $\ell^p$. 

\par Properties \emph{P1-P6} hold for any $p \in  [1,\infty[$. Properties \emph{P7} and \emph{P9'} are true for $p \in [1,2]$. \emph{P7} cannot be true if $p>2$. If $p \in ]2,\infty[$ then Property P10 also holds. 

\par If  $p=1$, then  \emph{P8} and \emph{P8'} fail. If $p=\infty$ then \emph{P1}, \emph{P3-P6} hold but \emph{P2}, \emph{P7}, \emph{P9} and \emph{P10} fail.

However, if $\mydim$ is $\dl{\infty}$ and $\mysp$ is $c_0$ then \emph{P1-P6} and \emph{P10} hold.
\end{teo}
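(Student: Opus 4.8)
The plan is to read this theorem as a bookkeeping statement that collects, case by case, the work done in Sections \ref{spos}--\ref{sinv}, so I would prove it property by property and dispatch each to the mechanism that establishes it. I would first isolate the purely structural properties P1--P6, which I expect to hold for every $p \in [1,\infty[$ because they only record how averaging over a F\o lner sequence interacts with the vector-space operations. Normalization P1 is immediate from the definition of $\dlp$ on the full space; Completion P3 holds because the defining quantity is insensitive to passing to the norm closure; and Reduction P4, Reciprocity P5 and Additivity P6 are the content of Section \ref{sadd}, where one checks that restricting to a finite-index subgroup scales the F\o lner ratio by the index and that the averaging is additive on internal direct sums. Invariance P2 is the subtle one: the naive version is false (see the apology and Example \ref{cexinv}), and the corrected statement, for injective equivariant maps of finite type without any closedness hypothesis, is Theorem \ref{tinv-new} of Section \ref{sinv}; its proof uses that finitely supported functions are norm-dense, which is why P2 survives in $\ell^p$ ($p<\infty$) and in $c_0$ but not in $\ell^\infty$.

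The properties that single out $p\le 2$ all descend from a single quantitative input, and this is where I expect the real work to lie. In Section \ref{spos} one must produce an explicit lower bound for $\dlp Y$ in terms of one nonzero vector $y\in Y$, valid precisely when $p\in[1,2]$. The natural source of the restriction is the inclusion $\ell^p(\Gamma)\subseteq\ell^2(\Gamma)$, available only for $p\le 2$: a nonzero $\ell^p$ vector is then a nonzero $\ell^2$ vector, the $\Gamma$-translates of which span an $\ell^2$-subspace of positive von Neumann dimension, and one transfers this Hilbertian positivity back to a lower bound on $\dlp Y$. Granting this bound, Positivity P7 for $p\in[1,2]$ is immediate in both directions, and Left-continuity at $\mysp$ (P9') follows in Section \ref{scont}, since an increasing family with dense union must eventually carry enough mass for the lower bound to force $\dlp Y_n \to \dim_\kk V$. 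That P7 fails for $p>2$ I would show by exhibiting a nontrivial closed invariant subspace of zero $\ell^p$ dimension, which is exactly the phenomenon that the $\ell^2$-comparison no longer excludes once $p>2$.

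For Maximality P10 when $p \in\, ]2,\infty[$ I would argue through duality rather than directly, using Section \ref{sdual}. The dual dimension $\ddlp$ measures the annihilator $Y^\perp\subseteq\ell^{p'}$ and satisfies a complementarity relation with $\dlp$, so that $\dlp Y = \dim_\kk V$ forces the dual contribution to vanish; since then $p'<2$, Positivity P7 in the range $[1,2]$ applied in $\ell^{p'}$ forces $Y^\perp=\{0\}$, whence $Y$ is dense and, being closed, is all of $\ell^p$ (finiteness of codimension being superfluous here, as noted after the list of properties). The $c_0$ case is handled identically, using $c_0^*=\ell^1$ and P7 for $p=1$ to supply the required positivity, which also yields Weak$^*$-maximality; and P1--P6 go through for $c_0$ exactly as for finite $p$, since finitely supported functions remain norm-dense. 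This duality also explains the contrast with $\ell^\infty$, whose dual is not $\ell^1$, so that no positivity is available to drive the argument.

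It remains to certify the failures. For $p=1$ the breakdown of P8 and P8' should be read off from the non-vanishing of reduced $\ell^1$ cohomology already for $\Gamma=\zz$ (the example cited after Theorem \ref{lareponse}): one builds a monotone family of invariant subspaces along which the dimension does not converge to the expected limit. For $p=\infty$ the structural P1, P3--P6 survive, but P2 fails by Example \ref{cexinv}, and P7, P9 and P10 fail because the density and positivity mechanisms all rely on norm-density of finitely supported functions, which is false in $\ell^\infty$. Finally, Theorem \ref{lareponse} is recovered by combining P7 with P9' and the submodularity coming from P6: if $\dlp Y=\delta>0$ and $\dlp Y_n\to\dim_\kk V$, then $\dlp(Y_n\cap Y)\ge \dlp Y_n+\delta-\dim_\kk V$ is eventually positive, so by the converse half of P7 some $Y_k\cap Y$ is nontrivial. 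The main obstacle throughout is the lower bound of Section \ref{spos}: every $p\le 2$ assertion, and, through duality, every $p>2$ (and $c_0$) maximality assertion, is a corollary of it, so the whole theorem stands or falls with that single estimate.
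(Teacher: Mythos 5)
Your mapping of P1--P6 to sections \ref{sadd} and \ref{sinv} matches the paper, but your two central mechanisms fail. For P7 with $p\in[1,2]$, the proposed transfer through the inclusion $\ell^p(\Gamma)\subset\ell^2(\Gamma)$ does not go through: since $\|x\|_{\ell^2}\leq\|x\|_{\ell^p}$, the ball $B^{Y}_1$ taken in $\ell^p$ is \emph{smaller} than the $\ell^2$ ball while the pseudo-norm $\ev_{\ell^p(\Omega)}$ is \emph{larger} than $\ev_{\ell^2(\Omega)}$, so the two monotonicities of $\ldm_\eps$ pull in opposite directions and no inequality of the form $\dlp Y\geq c\,\dl{2}\,\overline{Y}^{\ell^2}$ follows; nothing of this kind is established anywhere in the paper. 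The paper's theorem \ref{tpos} instead argues directly and finite-dimensionally: choose $y\in Y$ with $\|y\|_{\ell^p(F^\comp)}\leq\eps_0$, pack disjoint translates $\{\gamma F\}_{\gamma\in G_i}$ into $\Omega_i$ via lemma \ref{tcardtil}, form $Q=\pi\circ I$, and run Hilbert--Schmidt estimates (lemmas \ref{tpertub}, \ref{ttrace}, \ref{tbornker}), where $p\leq 2$ enters only through the elementary inequality $\|v\|_{\ell^2(N)}\leq\|v\|_{\ell^p(N)}$ on $\kk^N$. This matters beyond P7: proposition \ref{tcontlp} proves P9' by applying the \emph{quantitative} bound $\dlp Y\geq(1-2\eps_1^2)|F|^{-2}$ with $F=\{e\}$ and $y_n\to\delta_e$, using $\beta(\Omega_i;\{e\})=|\Omega_i|$ so that the bound tends to $1$ (plus the $\Gamma\times\zz_d$ trick when $\dim_\kk V>1$). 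You correctly anticipate that an explicit lower bound is needed, but the von Neumann transfer you sketch would not produce one, and qualitative positivity alone cannot force $\dlp Y_n\to\dim_\kk V$.

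Your duality route to P10 for $p>2$ has a second genuine gap: it requires the implication $\dlp Y=\dim_\kk V\Rightarrow\dl{p'}Y^\perp=0$, i.e.\ a complementarity identifying $\dlp$ with $\ddlp$. The paper never proves such a relation --- the duality propositions of section \ref{sdual} transfer P7 for $\dl{p'}$ into P10 for $\ddlp$, not for $\dlp$, and whether the two dimensions coincide is effectively left open (cf.\ question \ref{q:egal}). Instead, proposition \ref{maxlp+2} proves P10 (in the stronger form $Y=\ell^p$) directly for $p\in[2,\infty[$ and $c_0$: from $\dlp Y=1$ one extracts a subspace $L$ of dimension $\geq\alpha|\Omega|$ approximated by $Y$ on $\Omega$, compares nearest-point distances with the $\ell^2$ orthogonal projection via $\|\cdot\|_{\ell^p}\leq\|\cdot\|_{\ell^2}$ (the inequality for $p\geq2$, opposite to the one used in P7), bounds $\tr P_{L^\perp}\leq(1-\alpha)|\Omega|$, and then uses weak limits together with Mazur's lemma to place an element of the closed space $Y$ arbitrarily near $\delta_e$. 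Two smaller misattributions: the $p=1$ failure of P8/P8' is the explicit counterexample $Y^{(1)}_n=\ker\pi_n$ of \cite[Example 4.2]{moi-lp}, not a consequence of the non-vanishing of reduced $\ell^1$-cohomology; and the impossibility of P7 for $p>2$ is obtained indirectly through the analytic zero-divisor conjecture (remark \ref{paspos}, quoting \cite{Puls-zd1} for $\zz^k$), not by exhibiting a concrete nontrivial invariant subspace of zero $\ell^p$-dimension.
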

\begin{tabular}{l|cccccccccc}
  & P1& P2& P3& P4& P5& P6& P7&P8'&P9'&P10\\
\hline
$\dl{1}$ in $\ell^1$ 
  & T & T & T & T & T & T & T & F & T & ? \\
$\dlp$ in $\ell^p$ for $p \in ]1,2[$ 
  & T & T & T & T & T & T & T & ? & T & ?\\
$\dlp$ in $\ell^p$ for $p \in ]2,\infty[$
  & T & T & T & T & T & T & F & ? & ? & T\\
$\dl{\infty}$ in $c_0$  
  & T & T & T & T & T & T & F & ? & ? & T\\
$\dl{\infty}$ in $\ell^\infty$
  & T & F & T & T & T & T & F & ? & F & F
\end{tabular}
\begin{teo}\label{propvraidual}
Let $\mydim$ be $\ddlp$ and $\mysp$ to be $\ell^p$. 

\par Properties \emph{P1-P6} hold for any $p \in ]1,\infty[$. Properties \emph{P8'} and \emph{P10} are true for $p \in [2,\infty[$. \emph{P10} cannot be true for $p\in]1,2[$. If $p=1$, then only \emph{P1} and \emph{P3-P6} are known to hold. \emph{P7} holds if $p\in]1,2[$. 

Furthermore, if one takes $\mydim$ to be $\ddl{\infty}$ and $\mysp$ to be $c_0$, then \emph{P1-P6}, \emph{P8'} and \emph{P10} hold.
\end{teo}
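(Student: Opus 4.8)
The plan is to deduce every property of $\ddlp$ from the corresponding property of the ordinary dimension $\dl{q}$, with $q$ the conjugate exponent ($1/p+1/q=1$), via the duality $Y\mapsto Y^\perp$. Recall from Section~\ref{sdual} that for a closed $\Gamma$-invariant $Y\subseteq\ell^p(\Gamma;V)$ one sets $\ddlp Y=\dim_\kk V-\dl{q}(Y^\perp)$, where the annihilator $Y^\perp$ is a closed $\Gamma$-invariant subspace of $(\ell^p(\Gamma;V))^*\cong\ell^q(\Gamma;V^*)$ and $\dim_\kk V^*=\dim_\kk V$. The correspondence $Y\mapsto Y^\perp$ reverses inclusions, exchanges $\{0\}$ with the whole space, and---in the reflexive range $p\in\,]1,\infty[$---is an involution ($Y^{\perp\perp}=Y$) sending intersections to closed sums and conversely. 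Each property for $\ddlp$ is therefore the $Y^\perp$-translate of a property for $\dl{q}$, and the exponent ranges asserted in the theorem are exactly the images under $p\leftrightarrow q$ of the ranges of Theorem~\ref{propvrai}.

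First I would dispose of the functorial properties P1 and P3--P6, which transfer for every admissible $p$. Normalization P1 is immediate from $(\ell^p)^\perp=\{0\}$ and $\dl{q}\{0\}=0$; Completion P3 from $Y^\perp=(\srl{Y})^\perp$; Reduction P4 and Reciprocity P5 because the annihilator commutes, under the canonical identifications, with restriction to and induction from a finite-index subgroup, reducing them to P4--P5 for $\dl{q}$; Additivity P6 from $(Y_1\oplus Y_2)^\perp=Y_1^\perp\oplus Y_2^\perp$ together with additivity of $\dl{q}$. Invariance P2 is the first delicate point: from an injective $\Gamma$-equivariant map of finite type $f:Y_1\to Y_2$ (with the closed-image hypothesis) one passes to the transpose, which should again be an injective $\Gamma$-equivariant map of finite type $Y_2^\perp\to Y_1^\perp$; P2 for $\dl{q}$ then gives $\dl{q}Y_2^\perp\le\dl{q}Y_1^\perp$, i.e. $\ddlp Y_1\le\ddlp Y_2$. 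Since $q\in\,]1,\infty[$ precisely when $p\in\,]1,\infty[$, this explains the stated range; at $p=1$ the transpose lives in $\ell^\infty$, where P2 fails, so no such claim is made.

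Next I would handle the continuity--positivity--maximality cluster, where the duality performs the characteristic exchanges. A decreasing sequence $Y_n$ with $\cap Y_n=\{0\}$ dualizes to an increasing sequence $Y_n^\perp$ with $\srl{\cup Y_n^\perp}=\ell^q$, so P8' for $\ddlp$ is exactly P9' (left-continuity at $\mysp$) for $\dl{q}$; as P9' holds for $q\in[1,2]$ this yields P8' for $p\in[2,\infty[$. Maximality P10 for $\ddlp$ unfolds as $\dl{q}Y^\perp=0\Rightarrow Y^\perp=\{0\}\Rightarrow Y=\ell^p$, i.e. Positivity P7 for $\dl{q}$ together with reflexivity; P7 holds for $q\in[1,2]$, giving P10 for $p\in[2,\infty[$, while its failure for $q\in\,]2,\infty[$ (a nonzero invariant $Y^\perp$ with $\dl{q}Y^\perp=0$) produces a closed $Y\neq\ell^p$ with $\ddlp Y=\dim_\kk V$, forcing P10 to fail for $p\in\,]1,2[$. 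Dually, Positivity P7 for $\ddlp$ reads $\dl{q}Y^\perp=\dim_\kk V\Rightarrow Y=\{0\}$: Maximality P10 for $\dl{q}$ (valid for $q\in\,]2,\infty[$, i.e. $p\in\,]1,2[$) makes $Y^\perp$ of finite codimension, so $Y$ is finite dimensional (being, by reflexivity, the dual of the finite-dimensional quotient $\ell^q/Y^\perp$); but a finite-dimensional $\Gamma$-invariant subspace of $\ell^p(\Gamma;V)$ with $\Gamma$ infinite must be $\{0\}$, since any nonzero vector has infinitely many independent translates. This gives P7 exactly for $p\in\,]1,2[$.

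Finally the endpoint cases. For $p=1$ the conjugate exponent is $\infty$ and the target is $\dl{\infty}$ on $\ell^\infty$, where P2, P7 and P9' all fail (Theorem~\ref{propvrai}); hence only P1 and P3--P6 transfer, as claimed. For $\mysp=c_0$ one uses instead the predual identification $(c_0)^*=\ell^1$ and sets $\ddl{\infty}Y=\dim_\kk V-\dl{1}(Y^\perp)$ with $Y^\perp\subseteq\ell^1$; the functorial properties P1--P6 transfer as before, and P8' and P10 follow from P9' and P7 for $\dl{1}$, both valid since $1\in[1,2]$. The main obstacle throughout is not the exponent bookkeeping but the two places where the naive duality must be repaired: the transpose argument for P2 (exactly the closed-range and finite-type subtlety flagged in the Apology), and the identities $Y^{\perp\perp}=Y$ and $(\cap Y_n)^\perp=\srl{\cup Y_n^\perp}$, which are transparent for $p\in\,]1,\infty[$ but demand care with weak$^*$-closures and the bipolar theorem at $p=1$ and on $c_0$.
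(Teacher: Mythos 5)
Your architecture is exactly the paper's: the annihilator identities give P1 and P3--P6 for every admissible $p$, the exchanges P7 $\leftrightarrow$ P10 and P8' $\leftrightarrow$ P9' between $\dl{p'}$ and $\ddlp$ are the paper's proposition in section \ref{sdual}, the counterexample to P10 for $p\in\,]1,2[$ is obtained, as in the paper, from the failure of P7 for $\dl{p'}$ with $p'>2$, and the $c_0$ case is handled through the predual pairing with $\ell^1$ in the same way. So on the global level there is nothing to compare: it is the same proof.

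There is, however, one genuine gap, sitting precisely at the step you yourself flag as delicate: P2. You pass from an injective $f:Y_1\to Y_2$ to its transpose and assert it ``should again be injective'' as a map $Y_2^\perp\to Y_1^\perp$. This is backwards: by Hahn--Banach, injectivity of $f^*$ is dual to \emph{density of the range} of $f$ on the full space, not to injectivity of $f$. Concretely, take $f:\ell^p(\Gamma;\kk)\to\ell^p(\Gamma;\kk^2)$, $y\mapsto(y,0)$, an injective map of finite type with closed image, and $Y_2=f(Y_1)$: then $Y_2^\perp=Y_1^\perp\oplus\ell^{p'}(\Gamma;\kk)$ and the transpose $(a,b)\mapsto a$ annihilates the entire second summand, so it is far from injective on $Y_2^\perp$; your closed-image proviso does not repair this. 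The paper closes the gap in two moves. First it reduces to \emph{bijective} $f:Y\to Y'$, since inclusions are handled directly by annihilator reversal ($Y\subset Y'\Rightarrow Y'^\perp\subset Y^\perp$). Then, if the full-space map $f:\ell^p(\Gamma;V)\to\ell^p(\Gamma;V')$ is not surjective, it replaces $Y$ by $Z=Y\oplus\{0\}\subset\ell^p(\Gamma;V\oplus V')$ and extends $f$ to a surjection of the full spaces --- harmless because $Z^\perp=Y^\perp\oplus\ell^{p'}(\Gamma;V')$ and P6 give $\ddlp Z=\ddlp Y$. It is surjectivity (not injectivity) of this extension that makes the adjoint injective, and the adjoint is of finite type and carries $Y'^\perp$ into $Z^\perp$, so P2 for $\dl{p'}$ applies. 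A second, minor point: for P7 in $]1,2[$ you justify triviality of the finite-dimensional space by ``infinitely many independent translates'' of any nonzero vector, which is stronger than needed and not obvious for general amenable $\Gamma$; it suffices, as the paper tersely asserts, that a nonzero finite-dimensional $\Gamma$-invariant subspace of $\ell^p(\Gamma;V)$, $p<\infty$ (or $c_0$), cannot exist --- e.g.\ because translates of a nonzero $y$ tend weakly to $0$ while keeping constant norm, which is impossible in finite dimension. With these two repairs your argument coincides with the paper's proof.
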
 

\begin{tabular}{l|cccccccccc}
  & P1& P2& P3& P4& P5& P6& P7&P8'&P9'&P10\\
\hline
$\ddl{1}$ in $\ell^1$ 
  & T & ? & T & T & T & T & T & F & ? & F \\
$\ddlp$ in $\ell^p$ for $p \in ]1,2[$ 
  & T & T & T & T & T & T & T & ? & ? & F\\
$\ddlp$ in $\ell^p$ for $p \in ]2,\infty[$
  & T & T & T & T & T & T & ? & T & ? & T\\
$\ddl{\infty}$ in $c_0$  
  & T & T & T & T & T & T & ? & T & ? & T
\end{tabular}\\[.2cm]

\par Regarding $\dlp$, many of the above statements are found in \cite{moi-lp}: properties P1-P4 are established there, a counterexample \cite[Example]{moi-lp} is given to P8 (right-continuity) and P8' if $p=1$, while P7 (positivity) obviously fails for $p=\infty$ (look at $Y$ generated by a periodic function and its translates). The author apologizes as the proof of P2 given there only works for maps of finite type and closed image, and does not work in $\ell^\infty$ (see example \ref{cexinv}).

\par In the present text, P6 for $\dlp$ is theorem \ref{tadd}, proposition \ref{tredu} covers P4, P5 is corollary \ref{trec}, P7 is theorem \ref{tpos}, the counterexample to P9 and P10 for $p= \infty$ is example \ref{cexlcont}, 
P9' is the content of theorem \ref{tcontlp}, the counterexample to P2 in $\ell^\infty$ is example \ref{cexinv}, a proof of P2 in the other cases is theorem \ref{tinv-new} (see also proposition \ref{tinvtypl1} for other results), and proposition \ref{tcomp} covers P3. Maximality P10 is done in propositions \ref{maxlp+2}.

\par The dual version, $\ddlp$, is defined in section \ref{sdual} and its properties are also proved there (they are basically consequences of the properties of $\dlp$).

\par If one would be interested in adapting the arguments of \cite{CG}, the actual properties needed are P1-P8 (P5 is actually stated there for infinite index subgroups). Whereas, as shall be seen shortly, a positive answer to question \ref{laquestion} requires only P1, P2, P6, P7 and P9'.

\begin{proof}[Proof of Theorem \ref{lareponse}] (Recall that $\Gamma$ amenable and $p \in [1,2]$.)
Let $d = \dlp H$ then by P7, $d>0$. Since $\cup H_n = \ell^p(\Gamma;V)$ then, by P9', there exists a $k$ with $\dlp H_k > \dim_\kk V - d/2$. Using P6, $\dlp H_k \oplus H > \dim_\kk V$. If $H_k \cap H = \{0\}$ then there is a linear injective map (of finite type, continuous since $H_k$ and $H$ are closed) $H_k \oplus H \to \ell^p(\Gamma;V)$. Thanks to P1 and P2,  $\dlp H_k \oplus H \leq \dim_\kk V$, a contradiction. Thus $H_k \cap H \neq \{0\}$. 
\end{proof}

\section{Definitions and countable amenable groups} \label{sprel}

Perhaps the first notational commodity which should be mentioned is that $p'$ is the conjugate exponent of $p$, \ie $p' = \tfrac{p}{p-1}$. Also, when $\Omega \subset \Gamma$, $\Omega^\comp = \Gamma \setminus \Omega$ is the (set)-complement of $\Omega$. In order to make the other definitions, some basic notions on (finitely generated) countable amenable groups will be made.
\begin{defi}\label{dmoy}
For $\Gamma$ a finitely generated (countable) group, let $F, \Omega \subset \Gamma$ then the $F$\emph{-boundary of} $\Omega$ is
\[
\del_F   \Omega
   = \{\gamma \in \Gamma | \gamma F \cap \Omega \neq \vide \textrm{ and } \gamma F \cap \Omega^c \neq \vide\}
\]
When these sets are finite, the {\em relative amenability function} is $\alpha(\Omega;F) = \frac{|\del_F\Omega|}{|\Omega|}$. The group $\Gamma$ is {\em amenable} if there exists a sequence $\{ \Omega_i \}_{i\geq 1}$ of finite subsets of $\Gamma$ such that $\forall F \subset \Gamma$ finite, $\limm{i \to \infty} \alpha(\Omega_i;F) = 0$. Such a sequence $\{ \Omega_i \}$ is called a F{\o}lner sequence for $\Gamma$.
\end{defi}

\par The definition of $\dlp$ is based on a notion of ``thick'' dimension of some spaces. A pseudo-norm $\pnc$ is a norm except that $ \pn{x} =0 \nRightarrow x=0$. For $X \subset V$ the notation $\diam X = \supp{x,x' \in X} \pn{x-x'}$ will be maintained (though obviously a set of diameter $0$ can be bigger than a point). That said, the following definition is essentially a reversion of quantifier in the definition of Kolmogorov's width. 

\begin{defi}\label{dldm}
Let $V$ be a vector space with a pseudo-norm $\pnc$ and let $X \subset V$ be a subset. Define $\ldm_\eps (X, \pnc)$ as the smallest dimension of a vector space $V'$ such that there exists a linear map $f:V \to V'$ with $\diam (\ker f \cap X) \leq \eps$. 
\end{defi}

\par Remark that the above definition is equivalent to looking for the smallest codimension $k$ of a linear subspace $L^{-k}$ of $V$ such that $\diam (L^{-k} \cap X) \leq \eps$. When $X$ is convex and centrally symmetric (around the origin) this is nothing but looking for a $L^{-k}$ such that the biggest pseudo-norm of an element $x$ in $L^{-k} \cap X$ is less than $\tfrac{\eps}{2}$.

\begin{ex}\label{ebase}
Let $X$ be a normed vector space with the pseudo-norm being actually the norm $\nr{x-x'}$. Let $A = B^X_1$ be its unit ball. Then $\ldm_\eps (A,\pnr{\cdot}) = \dim_\kk X$ if $\eps < 2$ (if the map has a non-trivial kernel then it will contain two opposite points on the sphere) and  $\ldm_\eps (A,\pnr{\cdot}) = 0$ if $\eps \geq 2$ (consider the map which sends all of $A$ to one point). 

\par As a consequence, one can easily see that $\ldm_\eps (B^{\ell^p(\Gamma;V)}, \ev_{\ell^p(\Omega)}) = |\Omega| \dim_\kk V$. This is actually a proof of P1 for $\dlp$.
\end{ex}

\par If one considers throughout \cite{moi-lp} the category subsets in vector spaces endowed with a pseudo-norm and linear maps (rather than a topology, a pseudo-distance, and continuous maps) then the same results (upon restricting to the said category) can be obtained by replacing Urysohn's widths (\ie $\wdm$) with Kolmogorov's widths (\ie $\ldm$).

\par Trivially, $\wdm_\eps X \leq \ldm_\eps X$, but, more importantly, Donoho points out in \cite[Section III.D]{Dono} the following inequality: $\ldm_\eps X \leq \wdm_{\eps /2^{1+1/p}} X$. So the definition below is equivalent to the definition in \cite{moi-lp}.

\begin{defi} \label{defdlp}
Let $V$ be a finite-dimensional normed vector space. Let $Y \subset \ell^\infty(\Gamma;V)$ be a subspace invariant by the natural left action of $\Gamma$, an amenable countable group. Let $\{ \Omega_i \}_{i\geq 1}$ be a F{\o}lner sequence for $\Gamma$. Then, for $p\in [1,\infty]$, the $\ell^p$ dimension of $Y$ is defined by
\[
\dlp (Y, \{\Omega_i\}) = \limm{\eps \to 0} \lims{i \to \infty} \frac{\ldm_\eps (B^{Y}_1, \ev_{\ell^p(\Omega_i)})} {|\Omega_i|}
\]
where $B^{Y}_1 = Y \cap B^{\ell^p(\Gamma;V)}_1$, 
and $\ev_{\ell^p(\Omega_i)}(y,y') = \| y-y' \|_{\ell^p(\Omega_i)} = \Big( \somme{\gamma \in \Omega_i}{} \|y(\gamma)-y'(\gamma) \|_V^p \Big)^{1/p}$ if $p\neq \infty$, while $\ev_{\ell^\infty(\Omega_i)}(y,y') = \| y-y' \|_{\ell^\infty(\Omega_i)} = \supp{\gamma \in \Omega_i} \|y(\gamma)-y'(\gamma) \|_V$. 
\end{defi}

\par It is important that $B^{Y}_1$ is not the ball for the pseudo-norm $\ev_{\ell^p(\Omega)}$; it is the intersection of $Y$ with the ball of radius $1$ in $\ell^p(\Gamma)$ (which is the same thing as the unit ball of $Y$, when $Y \subset \ell^p(\Gamma;V)$). 

\par The above definition depends on a F{\o}lner sequence. It is known that $\dlp (Y, \{\Omega_i\})$ is actually independent of the choice of F{\o}lner sequence $\{\Omega_i\}$ (see \cite[Corollary 5.2]{moi-lp}). Furthermore, if $Y \subset \ell^2(\Gamma;V)$, $\dl{2} Y$ coincides with the von Neumann dimension (see \cite[Corollary A.2]{moi-lp}). Lastly, remark that if $\Gamma$ were to be finite, then $\dlp Y$ would simplify to $\frac{\dim Y}{|\Gamma|}$.

\par Among the results of amenable groups, a lower bound on the number of translates of a finite set $F$ in the sets of the F{\o}lner sequence will be used. Let $\Omega$ be another finite set. Define $\beta(\Omega;F)$ to be the maximal number of translates of $F$ that can be packed (without intersection) in $\Omega$.

\par To estimate this quantity, and also for other results of this paper, a small number of lemmas and definitions must be made. As our focus is countable groups, the (Haar) measure is always the counting measure, denoted $|\cdot|$. Many things remain true for non-countable groups, the curious reader is referred to the paper of Ornstein and Weiss \cite{OW}.

\begin{defi}\label{dqt}
Let $\eps \in]0,1[$. Finite subsets $\{F_i\}_{1 \leq i \leq n}$ of $\Gamma$ will be said $\eps$-disjoint if there exists $F'_i \subset F_i$ which are disjoint and such that $|F_i'| \geq (1-\eps)|F_i|$ and $\cup F_i' = \cup F_i$.
\par A finite subset $\Omega$ will be said to admit an $\eps$-quasi-tiling by the subsets $\{F_i\}_{1 \leq i \leq n}$ if
\begin{enumerate}\renewcommand{\labelenumi}{{\normalfont (\alph{enumi})}}
\item $F_i \subset \Omega$,
\item the $F_i$ are $\eps$-disjoint,
\end{enumerate}
\end{defi}

\par The proof of the following lemmas can be found in \cite{OW} (in their original form), \cite{Kri} or \cite[Section 5]{moi-lp}.

\begin{lem}\label{epac}
Let $\Gamma$ be a countable group. Let $\Omega\subset \Gamma$ and $e_\Gamma \in F \subset \Gamma$ both finite sets and such that $\alpha(\Omega;F) < 1$. 
Let $G$ be a maximal (with respect to inclusion) finite subset of $\Gamma$ such that the $\{\gamma F\}_{\gamma \in G}$ form an $\eps$-quasi-tiling of $\Omega$. Let $U_F = \union{\gamma \in \Gamma}{} \gamma F$, then

\[
\fr{|U_F|}{|\Omega|} \geq \eps (1-\alpha(\Omega;F)).
\]
\end{lem}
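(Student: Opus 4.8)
The plan is to extract from the maximality of $G$ a uniform covering estimate --- namely that every translate of $F$ sitting inside $\Omega$ overlaps $U_F$ in more than an $\eps$-fraction of its mass --- and then to turn this into a lower bound on $|U_F|$ by double counting, with the boundary term $\alpha(\Omega;F)$ entering through a count of the translates $\gamma F$ contained in $\Omega$. (Throughout, $U_F$ is to be read as $\union{\gamma \in G}{}\gamma F$, the union of the translates forming the quasi-tiling.)

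The heart of the argument is the maximality step. I would show that for every $\gamma_0 \in \Gamma$ with $\gamma_0 F \subset \Omega$ one has $|\gamma_0 F \setminus U_F| < (1-\eps)|F|$, equivalently $|\gamma_0 F \cap U_F| > \eps|F|$. Suppose not, and let $(\gamma F)' \subset \gamma F$, for $\gamma \in G$, be the disjoint witnesses to $\eps$-disjointness, so they are pairwise disjoint, satisfy $|(\gamma F)'| \geq (1-\eps)|F|$, and have union $U_F$. Adjoining $\gamma_0$ and setting $(\gamma_0 F)' := \gamma_0 F \setminus U_F$ gives a family witnessing that $\{\gamma F\}_{\gamma \in G \cup \{\gamma_0\}}$ is still an $\eps$-quasi-tiling: the new piece is disjoint from $U_F$ hence from all the old pieces, has size $\geq (1-\eps)|F|$ by the negated assumption, and the union becomes $U_F \cup \gamma_0 F = \union{\gamma \in G\cup\{\gamma_0\}}{}\gamma F$, so the clause $\cup F_i' = \cup F_i$ is preserved; clause (a) holds since $\gamma_0 F \subset \Omega$. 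This contradicts the maximality of $G$. Observe that the estimate holds trivially for $\gamma_0 \in G$ as well, since then $\gamma_0 F \subset U_F$.

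For the boundary count, write $\Omega_{-F} = \{\gamma : \gamma F \subset \Omega\}$. Since $e_\Gamma \in F$, every $\omega \in \Omega$ satisfies $\omega \in \omega F \cap \Omega$, so $\Omega \subset \{\gamma : \gamma F \cap \Omega \neq \vide\}$; and by the very definition of the $F$-boundary this latter set partitions as $\Omega_{-F} \sqcup \del_F \Omega$. Hence $|\Omega_{-F}| \geq |\Omega| - |\del_F\Omega| = (1-\alpha(\Omega;F))|\Omega|$, which is positive because $\alpha(\Omega;F) < 1$. Now I would count pairs $(\gamma,u)$ with $\gamma \in \Omega_{-F}$ and $u \in \gamma F \cap U_F$. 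Summing over $\gamma$ and using the covering estimate gives a total exceeding $\eps|F|\cdot|\Omega_{-F}| \geq \eps(1-\alpha(\Omega;F))|\Omega|\,|F|$; summing over $u \in U_F$ instead, each $u$ lies in $\gamma F$ for at most $|uF^{-1}| = |F|$ values of $\gamma$, so the total is at most $|U_F|\,|F|$. Comparing and dividing by $|F|$ yields
\[
\frac{|U_F|}{|\Omega|} > \eps\,(1-\alpha(\Omega;F)),
\]
which is the claim (in fact with strict inequality).

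The only genuinely delicate point is the maximality step: one must phrase "$G$ cannot be enlarged" correctly and verify that the enlarged family meets \emph{all} clauses of $\eps$-disjointness --- in particular the constraint $\cup F_i' = \cup F_i$, which is precisely what dictates the choice $(\gamma_0 F)' = \gamma_0 F \setminus U_F$ rather than any other subset of $\gamma_0 F$. Once that translation is made, the boundary count and the double counting are routine.
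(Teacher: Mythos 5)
Your proof is correct and follows essentially the same route as the paper's (which defers the proof to \cite{OW}, \cite{Kri} and \cite[Section 5]{moi-lp}): maximality of $G$ forces $|\gamma_0 F \cap U_F| > \eps |F|$ for every $\gamma_0$ with $\gamma_0 F \subset \Omega$, and the double count over $\Omega_{-F} = \{\gamma : \gamma F \subset \Omega\}$, whose size is at least $(1-\alpha(\Omega;F))|\Omega|$ thanks to $e_\Gamma \in F$, yields the bound. Your reading of $U_F$ as $\bigcup_{\gamma \in G} \gamma F$ (correcting the statement's evident typo $\gamma \in \Gamma$) and your care with the clause $\cup F_i' = \cup F_i$ in the maximality step, which dictates the witness $(\gamma_0 F)' = \gamma_0 F \setminus U_F$, are both accurate.
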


\begin{lem}\label{paciter}
Let $\{F_i\}_{i \in \zz_{>0}}$ be a F{\o}lner sequence for $\Gamma$, let $\delta \in ]0,1/2[$. Then there exists an integer $n(\delta)$, a \emph{finite} subsequence (which depends on $\delta$) $\{F_{i_k}\}_{1 \leq k \leq n}$,  and finite subsets $G_k \subset \Gamma$ such that for all set $\Omega$ containing a translate of $F_{i_n}$ and satisfying $\alpha(\Omega; F_{i_n}) \leq 2\delta^{2n}$ there exists a  family $\jo{G}$ of $\delta$-disjoint sets such that $|\union{F \in \jo{G}}{} F | \geq (1-\delta)|\Omega|$ and $\jo{G}$ consists in $|G_k|$ translates of each $F_{i_k}$.
\end{lem}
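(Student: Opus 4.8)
The plan is to prove this quasi-tiling lemma by iterating the single-tile covering estimate of Lemma \ref{epac}, packing tiles from the largest index down to the smallest. I would fix once and for all $\eps = \delta$ as the disjointness parameter for the $\eps$-quasi-tilings at every stage, and choose the length $n = n(\delta)$ of the subsequence so that $(1-\tfrac{\delta}{2})^n < \delta$; this is the number of covering steps needed to push the uncovered fraction below $\delta$, since each step will be arranged to cover at least a fraction $\tfrac{\delta}{2}$ of the currently uncovered region. The subsequence $F_{i_1} \subset \cdots \subset F_{i_n}$ — taken nested and containing $e_\Gamma$, which is harmless once one is free to pass to a subsequence of a F{\o}lner sequence — is selected recursively so that each tile is far more invariant than its predecessor: given $F_{i_k}$, pick $i_{k+1}$ so large that $\alpha(F_{i_{k+1}}; F_{i_k})$ lies below a threshold $\tau_k$ to be fixed by the bookkeeping below. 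This is possible precisely because $\{F_i\}$ is F{\o}lner, so $\alpha(F_i; F) \to 0$ for every fixed finite $F$. Nestedness will be used only through the monotonicity $\del_F \Omega \subseteq \del_{F'}\Omega$ for $F \subseteq F'$, so that the single hypothesis $\alpha(\Omega; F_{i_n}) \leq 2\delta^{2n}$ controls $\alpha(\Omega; F_{i_k})$ for every $k$.

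Next comes the iteration itself. Set $R_0 = \Omega$. Having produced an uncovered region $R_j$ after packing (maximal $\eps$-disjoint) translates of $F_{i_n}, \dots, F_{i_{n-j+1}}$, I would apply Lemma \ref{epac} to $R_j$ with tile $F_{i_{n-j}}$: a maximal $\eps$-disjoint family $\{\gamma F_{i_{n-j}}\}_{\gamma \in G_{n-j}}$ of translates inside $R_j$ covers a set $U_{j+1}$ with $|U_{j+1}| \geq \delta(1-\alpha(R_j; F_{i_{n-j}}))|R_j|$, and I set $R_{j+1} = R_j \setminus U_{j+1}$. Provided $\alpha(R_j; F_{i_{n-j}}) \leq \tfrac12$ throughout (the content of the last paragraph), each step covers at least $\tfrac{\delta}{2}|R_j|$, so after $n$ steps $|R_n| \leq (1-\tfrac{\delta}{2})^n|\Omega| < \delta|\Omega|$ and the total covered fraction exceeds $1-\delta$. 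That the resulting family $\jo{G} = \bigcup_{k} \{\gamma F_{i_k}\}_{\gamma \in G_k}$ is $\delta$-disjoint in the sense of Definition \ref{dqt} follows by listing the tiles in their order of placement and setting $T' = T \setminus (\text{union of all tiles placed before } T)$: each tile when placed overlaps the prior union by at most $\eps = \delta$ of its size, so $|T'| \geq (1-\delta)|T|$, the $T'$ are pairwise disjoint, and their union equals that of the $T$. This simultaneously exhibits $\jo{G}$ as $|G_k|$ translates of each $F_{i_k}$.

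The main obstacle is the boundary bookkeeping that keeps $\alpha(R_j; F_{i_{n-j}})$ below $\tfrac12$ across all $n$ stages. The $F$-boundary of $R_{j+1}$ is contained in $\del_F R_j$ together with the boundaries $\del_F(\gamma F_{i_{n-j}})$ of the newly placed tiles; since there are at most $|R_j|/((1-\delta)|F_{i_{n-j}}|)$ such tiles, each contributing $\alpha(F_{i_{n-j}}; F)\,|F_{i_{n-j}}|$, one gets for $F = F_{i_{n-j-1}}$ the recursion $|\del_F R_{j+1}| \leq |\del_F R_j| + \tfrac{1}{1-\delta}\alpha(F_{i_{n-j}}; F)\,|\Omega|$. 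Summing over stages and bounding every $\alpha(\Omega; F_{i_k})$ by the hypothesis via monotonicity yields $|\del_{F_{i_{n-j}}} R_j| \lesssim \delta^{2n}|\Omega| + \sum_{l} \tau_l\,|\Omega|$; since the procedure runs only while $|R_j| \geq \delta|\Omega|$, dividing gives $\alpha(R_j; F_{i_{n-j}}) \lesssim \delta^{-1}\big(\delta^{2n} + \sum_l \tau_l\big)$. It is here that the thresholds $\tau_k$ and the exponent $2n$ in the hypothesis must be pinned down — of order a small multiple of $\delta^{2n}$ — so that this remains below $\tfrac12$ at each of the $n$ levels. Making these constants mutually consistent across all levels at once, so that repeatedly dividing by the shrinking $|R_j|$ never destroys the estimate, is the delicate quantitative heart of the argument, and the powers of $\delta$ appearing in the statement are exactly what this induction forces.
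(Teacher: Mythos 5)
Two preliminary remarks: the paper itself does not prove Lemma \ref{paciter} --- it defers to \cite{OW}, \cite{Kri} and \cite[Section 5]{moi-lp} --- and your overall architecture (pack maximal $\delta$-disjoint families of the largest tile first via Lemma \ref{epac}, remove, iterate with the next tile, stop once the remainder drops below $\delta|\Omega|$, with $n$ fixed by $(1-\delta/2)^n<\delta$) is exactly the standard Ornstein--Weiss argument behind those citations. Your treatment of $\delta$-disjointness is also fine: within a stage the family is $\delta$-disjoint by construction and across stages the tiles are genuinely disjoint since each stage packs inside the previous remainder, and your inclusion $\del_F R_{j+1} \subseteq \del_F R_j \cup \bigcup_\gamma \del_F(\gamma F_{i_{n-j}})$ is correct.

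There are, however, two genuine gaps. The first is the nestedness reduction, which you declare ``harmless once one is free to pass to a subsequence of a F{\o}lner sequence.'' That is false: a F{\o}lner sequence need not contain \emph{any} nested pair --- take $F_i=\{i,i+1,\dots,2i\}$ in $\zz$, which is F{\o}lner while $F_i \not\subseteq F_j$ for all $i\neq j$. This matters because your only route from the single hypothesis $\alpha(\Omega;F_{i_n})\leq 2\delta^{2n}$ to control of $|\del_{F_{i_k}}\Omega|$ for $k<n$ is the monotonicity $\del_{F}\Omega\subseteq\del_{F'}\Omega$ for $F\subseteq F'$; without nesting, invariance under a large set does \emph{not} imply invariance under a smaller one (in $\zz^2$, a thin rectangle $\Omega$ can be very invariant under a long horizontal segment yet have $\alpha(\Omega;F)>1$ for a short vertical segment $F$). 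The gap is repairable --- if $e_\Gamma\in F$ and $\alpha(K;F)<1$ then some translate of $F$ lies inside $K$ (any $k\in K$ with $kF\not\subseteq K$ belongs to $\del_F K$), so one can recursively arrange $t_kF_{i_k}\subseteq t_{k+1}F_{i_{k+1}}$ for suitable translates, and since $|\del_{\gamma F}\Omega|=|\del_F\Omega|$ and the conclusion anyway only involves translates of the $F_{i_k}$, nestedness up to translation suffices --- but this step is missing from your argument, not merely elided.

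The second gap is that you explicitly defer ``the delicate quantitative heart'': pinning down the thresholds $\tau_k$ and verifying that $\alpha(R_j;F_{i_{n-j}})\leq \tfrac12$ survives all $n$ stages despite dividing by $|R_j|\geq\delta|\Omega|$. But this bookkeeping \emph{is} the lemma --- it is exactly where the constant $2\delta^{2n}$ in the hypothesis comes from --- so as written the proposal is a correct plan rather than a proof. For the record it does close: choosing $i_{l}$ so that $\alpha(F_{i_l};F_{i_k})\leq\delta^{2l}$ for all $k<l$ makes the accumulated tile-boundary sums geometric, $\somme{l>k}{}\delta^{2l}\leq \delta^{2(k+1)}/(1-\delta^2)$, and then $\alpha(R_j;F_{i_k}) \leq \delta^{-1}\big(2\delta^{2n} + \tfrac{1}{1-\delta}\cdot\tfrac{\delta^{2(k+1)}}{1-\delta^2}\big)$ stays below $\tfrac12$ for all $\delta\in\,]0,1/2[$ after a short computation. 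Writing out that induction (together with the translation-nesting fix above) would turn your sketch into the proof that the paper's references \cite{OW} and \cite{Kri} carry out.
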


\par The next lemma is the promised estimate on the ``packing'' of finite sets. Lemma \ref{epac} and \ref{paciter} are the only two necessary ingredients.

\begin{lem}\label{tcardtil}
Let $F$ be a finite subset of $\Gamma$, and let $\Omega_i$ be a F{\o}lner sequence. There exists $i_F \in \nn$ such that for all $i \geq i_F$ and for any finite set $G \subset \Gamma$ maximal (under inclusion) among those who satisfy that $\{\gamma F\}_{\gamma \in G}$ is a disjoint collection of set inside $\Omega_i$. Furthermore, 
\[
\frac{|\Omega_i|}{|F|} \geq |G_i| 
\geq \frac{ \big( 1 - \alpha(\Omega_i;F) \big) |\Omega_i|}{|F|^2}
\]
In particular, $|F|^{-1} \geq \lim_i \beta(\Omega_i;F)/|\Omega_i| \geq |F|^{-2}$ and the limit is independent of the choice of sequence.
\end{lem}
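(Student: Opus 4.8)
The plan is to establish the two displayed inequalities directly from the definition of $G$, and then read off the ``in particular'' clause by letting $i\to\infty$. The upper bound needs no maximality: the translates $\{\gamma F\}_{\gamma\in G}$ are pairwise disjoint, each of cardinality $|F|$, and all contained in $\Omega_i$, so $|G|\,|F|=\big|\bigcup_{\gamma\in G}\gamma F\big|\leq|\Omega_i|$, whence $|G|\leq|\Omega_i|/|F|$.

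The lower bound is the heart of the matter and rests on a covering estimate coming from maximality. Write $U=\bigcup_{\gamma\in G}\gamma F$, a disjoint union, so $|U|=|G|\,|F|$. If some $\gamma$ satisfied $\gamma F\subseteq\Omega_i$ and $\gamma F\cap U=\emptyset$, then $G\cup\{\gamma\}$ would still index a disjoint collection inside $\Omega_i$, contradicting inclusion-maximality of $G$. Hence $\{\gamma:\gamma F\subseteq\Omega_i\}\subseteq\{\gamma:\gamma F\cap U\neq\emptyset\}=UF^{-1}$, and the latter has cardinality at most $|U|\,|F|=|G|\,|F|^2$. I would then bound the left-hand set from below: since $\{\gamma:\gamma F\cap\Omega_i\neq\emptyset\}=\Omega_iF^{-1}$ contains $\Omega_i f^{-1}$ for any fixed $f\in F$ it has at least $|\Omega_i|$ elements, and discarding those $\gamma$ whose translate also meets $\Omega_i^\comp$ removes only the elements of $\del_F\Omega_i$; thus $|\{\gamma:\gamma F\subseteq\Omega_i\}|\geq|\Omega_i|-|\del_F\Omega_i|=(1-\alpha(\Omega_i;F))|\Omega_i|$. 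Combining gives $(1-\alpha(\Omega_i;F))|\Omega_i|\leq|G|\,|F|^2$, the claimed lower bound. This is exactly the disjoint-packing specialization of the Ornstein--Weiss argument behind Lemma \ref{epac}.

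For the ``in particular'' statement, observe that a packing realizing $\beta(\Omega_i;F)$ (one of maximum cardinality) is in particular inclusion-maximal, so both inequalities apply to it and yield
\[
\frac{1-\alpha(\Omega_i;F)}{|F|^2}\leq\frac{\beta(\Omega_i;F)}{|\Omega_i|}\leq\frac{1}{|F|}.
\]
I would choose $i_F$ so that $\alpha(\Omega_i;F)<1$ for $i\geq i_F$, which is possible because $\{\Omega_i\}$ is F{\o}lner and makes the lower bound positive, so that $G\neq\emptyset$. Letting $i\to\infty$ and using $\alpha(\Omega_i;F)\to0$ traps both $\liminf$ and $\limsup$ of $\beta(\Omega_i;F)/|\Omega_i|$ inside $[|F|^{-2},|F|^{-1}]$; the existence of the limit and its independence of the F{\o}lner sequence then follow from the standard Ornstein--Weiss convergence lemma for such tiling densities (as in \cite{OW}, \cite{Kri}, or \cite[Section 5]{moi-lp}).

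The two cardinality counts are routine; the points needing care are the identity $\{\gamma:\gamma F\cap U\neq\emptyset\}=UF^{-1}$ together with the bound $|\Omega_iF^{-1}|\geq|\Omega_i|$, and the inclusion-versus-maximum distinction in the final step (the inequalities are proved for an arbitrary inclusion-maximal $G$, and $\beta$ is recovered by applying them to a maximum-cardinality packing). The only genuine external input, and the sole real obstacle, is the existence and F{\o}lner-independence of the limit, which I would not reprove but quote from the Ornstein--Weiss machinery already invoked for Lemmas \ref{epac} and \ref{paciter}.
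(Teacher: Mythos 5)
Your proof is correct, and its key step takes a genuinely different route from the paper's. For the lower bound, the paper invokes Lemma \ref{epac}: choosing $\eps = \frac{1}{(1+\eps_2)|F|}$ there, $\eps$-disjointness of translates of $F$ forces actual disjointness (a subset of $\gamma F$ with at least $(1-\eps)|F| > |F|-1$ elements is all of $\gamma F$), so a maximal $\eps$-quasi-tiling is a genuine disjoint packing with $|U_F| = |G|\,|F|$, and the covering estimate $|U_F| \geq \eps\big(1-\alpha(\Omega;F)\big)|\Omega|$ yields the claimed bound after letting $\eps_2 \to 0$. You instead run the maximality argument directly on exact packings: every $\gamma$ with $\gamma F \subseteq \Omega_i$ must satisfy $\gamma F \cap U \neq \vide$, so $\Omega_i F^{-1} \setminus \del_F \Omega_i \subseteq U F^{-1}$, and the two elementary counts $|\Omega_i F^{-1} \setminus \del_F \Omega_i| \geq \big(1-\alpha(\Omega_i;F)\big)|\Omega_i|$ and $|U F^{-1}| \leq |G|\,|F|^2$ give the inequality in one stroke. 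As you observe, this is the disjoint-packing specialization of the Ornstein--Weiss maximality idea underlying Lemma \ref{epac}; it buys a self-contained argument that avoids the quasi-tiling detour and the $\eps_2$-bookkeeping the paper glosses over, whereas the paper's version buys reuse of a lemma it needs anyway (for Lemma \ref{paciter}). Everything else matches the paper: the upper bound is the same obvious cardinality count, your choice of $i_F$ (so that $\alpha(\Omega_i;F)<1$, which also forces a maximal $G$ to be nonempty) fills in a point the paper leaves implicit, your inclusion-maximal versus maximum-cardinality distinction correctly recovers the bounds for $\beta(\Omega_i;F)$, and the existence and F{\o}lner-independence of the limit is quoted, exactly as in the paper, from the Ornstein--Weiss lemma applied to $\Omega \mapsto \beta(\Omega;F)$ (\cite{OW}, \cite{Kri}, or \cite[Theorem 5.1]{moi-lp}).
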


\begin{proof}
This is a standard result for discrete amenable groups. The independence on the choice of sequence for the limit is a consequence of the Ornstein-Weiss lemma applied to the function $\Omega \mapsto \beta(\Omega;F)$ (see \cite{OW}, \cite{Kri} or \cite[Theorem 5.1]{moi-lp}). The upper bound $|G|  \leq |\Omega|/|F|$ is an obvious cardinality argument. 
\par The lower bound comes from the existence of $\eps$-quasi-tilings (see definition \ref{dqt} and lemma \ref{epac} above or \cite[Lemma 5.5]{moi-lp}). Pick $\eps = \frac{1}{(1+\eps_2)|F|}$ (for some arbitrarily small $\eps_2 >0$) in said lemma to produce a disjoint quasi-tiling. Remark then that 
$|U_F| = |G| \, |F|$ to get $|G| \, |F| \geq \big(1-\alpha(\Omega;F)\big) |\Omega| / |F|$.
\end{proof}

Before closing this introduction, we make a small parenthesis about $\Gamma$-equivariant maps. Though there is definitively still a good measure of freedom left (see Derighetti's book \cite[\S{}1.2]{Der}), most of the maps are of a peculiar type. 

\begin{lem}\label{ttypl1}
Let $\mysp$ be $\ell^p$ for $1 \leq p < \infty$ or $c_0$ and let $q \in [1,\infty]$. 
If $f: \mysp(\Gamma;V_1) \to \ell^q(\Gamma;V_2)$ is $\Gamma$-equivariant, then $f$ admits a formal expression as a (right-)convolution. 
\end{lem}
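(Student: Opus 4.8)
The plan is to exploit that a linear map commuting with left translation is rigidly determined by its values on the Dirac masses, which directly exposes the convolution kernel (the lemma tacitly concerns linear maps, since only these can be convolutions). Write $\delta_\gamma \otimes v$ for the element of $\mysp(\Gamma;V_1)$ equal to $v \in V_1$ at $\gamma$ and $0$ elsewhere, and recall the action $(\gamma \cdot h)(\gamma') = h(\gamma^{-1}\gamma')$. The first observation is simply that $\delta_\gamma \otimes v = \gamma \cdot (\delta_e \otimes v)$, so that by $\Gamma$-equivariance the value of $f$ on \emph{every} Dirac mass is dictated by its value on the masses concentrated at the identity $e$.

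Accordingly, I would set $\mu_v := f(\delta_e \otimes v) \in \ell^q(\Gamma;V_2)$ for $v \in V_1$; since $f$ is linear, $v \mapsto \mu_v$ is linear, and it is natural to package it as a kernel $K \colon \Gamma \to \homo(V_1,V_2)$ defined by $K(\eta)v = \mu_v(\eta)$. Equivariance then gives $f(\delta_\gamma \otimes v) = \gamma \cdot \mu_v$, and evaluating at $\gamma'$ yields $f(\delta_\gamma \otimes v)(\gamma') = \mu_v(\gamma^{-1}\gamma') = K(\gamma^{-1}\gamma')v$.

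The next step is to extend this from single Dirac masses to all finitely supported functions by linearity. For $x$ of finite support one has $x = \sum_\gamma \delta_\gamma \otimes x(\gamma)$, whence
\[
f(x)(\gamma') = \sum_\gamma K(\gamma^{-1}\gamma')\, x(\gamma),
\]
which is exactly right-convolution of $x$ by the operator-valued kernel $K$. (A quick sanity check — that $x \mapsto x * K$ indeed commutes with left translation, via the change of variables $\gamma \mapsto \eta\gamma$ — confirms the correct \emph{right}-handed convention.) Since $\mysp$ is $\ell^p$ with $p<\infty$ or $c_0$, the finitely supported functions are norm-dense, so $f$ is pinned down by the formula above on a dense subspace, and the symbol $K = f(\delta_e)$ is a genuine element of $\ell^q$; note that the hypothesis on $q$ plays no role, as only density in the source matters.

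The step deserving the most care is the meaning of the word \emph{formal}: I do not expect the convolution sum $\sum_\gamma K(\gamma^{-1}\gamma')x(\gamma)$ to converge (pointwise or in norm) for every $x \in \mysp$, nor is $f$ assumed continuous, so I would not assert the identity $f(x) = x*K$ literally for all inputs. What the argument establishes is the kernel $K$ together with the genuine identity on the dense subspace of finitely supported functions. It is worth remarking that this is precisely where the exclusion of $\ell^\infty$ as the source enters: in $\ell^\infty(\Gamma;V_1)$ the finitely supported functions are not dense, the value $f(\delta_e)$ no longer determines $f$, and the convolution description genuinely fails there.
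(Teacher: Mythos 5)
Your proposal is correct and follows essentially the same route as the paper: both determine $f$ by its values $f(\delta_{e,v})$ on Dirac masses at the identity (via $\Gamma$-equivariance and linearity), package these as an operator-valued kernel $h \in \ell^q(\Gamma;\homo(V_1,V_2))$, and extend by linearity over the finitely supported functions, whose density is exactly why $\ell^\infty$ is excluded as the source. The only nuance is that the paper tacitly treats $f$ as continuous and passes the convolution formula to all of $\mysp$ as a limit of partial sums, whereas you decline to assume continuity and keep the literal identity only on the dense subspace --- both readings are consistent with the lemma's claim of a merely \emph{formal} convolution expression.
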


\begin{proof}
If $f$ is defined on the whole of $\mysp(\Gamma;V_1)$ and since any element of $\mysp$ is the limit of sums of Dirac functions, it is actually determined by $f(\delta_{e,v})$ where $\delta_{e,v}$ is the Dirac function at the identity with value $v \in V_1$. Let $h \in \mysp(\Gamma,\homo(V_1,V_2))$ be defined by $h(\gamma)(v) = f(\delta_{e,v})(\gamma)$ for $v \in V_1$. Now $\mysp(\Gamma;V) \ni x =  \sum_\gamma \delta_{\gamma,x(\gamma)}$ and $x$ is a limit of the partial sums, hence, by linearity, continuity and $\Gamma$-equivariance of $f$,
\[
f(x)(\eta) = f\big( \sum_\gamma \delta_{\gamma,x(\gamma)} \big)(\eta) = \sum_\gamma \gamma f \big( \delta_{e,v_\gamma} \big)(\eta) = \sum_\gamma \gamma h(\eta) \big( x(\gamma) \big) = \sum_\gamma h(\gamma^{-1} \eta) x(\gamma).
\]
This gives the convolution on the right. 
\end{proof}
\par The above argument fails if $\mysp = \ell^\infty$ as its elements are not all limits of the partial sums of Dirac elements. See example \ref{cexinv} for a $\Gamma$-invariant map which cannot be written in this way.

\begin{defi}\label{dtype}
Let $Y_i \subset \ell^\infty(\Gamma;V_i)$ (where $i=1,2$). 
A $\Gamma$-equivariant map $f: Y_1 \to Y_2$ is said to be of $\mysp$ type if there exists an element $h \in \mysp(\Gamma; \homo(V_1,V_2) )$ such that 
\[
f(y)(\eta) = \somme{\gamma \in \Gamma}{} h(\gamma^{-1} \eta) y(\gamma) = \somme{\gamma \in \Gamma}{} h(\gamma) y(\eta \gamma^{-1})  
\]
If there exists a $h$ with finite support, then $f$ is said of finite type. If $f:Y_1 \to Y_2$ is in the closure (in the operator norm) of the maps of finite type then $f$ will be called a pseudo-function (the closure depends on $Y_1$ and the norm in $Y_2$). 
\end{defi}

The standard terminology is $p$-pseudo-function for a pseudo-function from $\ell^p \to \ell^p$. Perhaps even more common is the name reduced $C*$-algebra (and notation $C^*_r(\Gamma)$) for a pseudo-functions from $\ell^2(\Gamma) \to \ell^2(\Gamma)$. Note that maps of $\ell^1$ type are always pseudo-functions thanks to Young's inequality.

In other words, $f$ is of $\mysp$ type if it is a convolution by an element of $\mysp$. Note all convolutions (of the above form) are $\Gamma$-equivariant maps. 

\section{Dual $\ell^p$ dimension.}\label{sdual}

\par Given a subspace $Y$ of a Banach space $X$, its dual is not actually a subspace of $X^*$, the dual of $X$, but a quotient of that space. More precisely, let $Y^\perp = \{ x^* \in X^* \mid \forall y \in Y, \pgen{x^*,y} =0\}$ be the annihilator of $Y$ (see among many possible choices \cite[Section 4.6]{Rud}), then $Y^* = X^*/Y^\perp$. 

\par Consequently, the dual of a linear subspace is quite awkward to work with as in general the norm on the quotient is not so nice. But $Y^\perp$ remains a reasonably tractable space. In particular, if $Y \subset \ell^p(\Gamma;V)$ is a $\Gamma$-invariant linear subspace, then $Y^\perp$ is a closed (actually, weak$^*$-closed) $\Gamma$-invariant linear subspace of $\ell^{p'}(\Gamma;V)$.

\begin{defi}
Let $V$ be a finite-dimensional normed vector space. Let $p\in[1,\infty]$ and $Y \subset \ell^p(\Gamma;V)$ (or $c_0(\Gamma;V)$ if $p=\infty$) be a subspace invariant by the natural action of $\Gamma$, an amenable countable group. Then, the dual $\ell^p$ dimension of $Y$ is defined by
\[
\ddlp Y = \dim_\kk V - \dl{p'}Y^\perp 
\]
\end{defi}

\par In order to avoid some typical problems related to the spaces $\ell^1$ and $\ell^\infty$, a convention will be made in throughout this section. When $p=\infty$, $\ell^p$ should be read as $c_0$. However, when $p=1$, the space $\ell^{p'}$ should still be understood as $\ell^{\infty}$.

\par In order to speak of the properties of $\ddlp$, it might be worth to make a rapid tour of some basic properties of $Y^\perp$. But before, define, for $Z \subset X^*$, the weak annihilator as $Z^\perpud = \{ x \in X \mid \forall x^* \in Z, \pgen{x^*,x} =0 \}$. Let  $Y \subset \ell^p(\Gamma;V)$ and $Y^\perp \subset \ell^{p'}(\Gamma;V)$ be its annihilator, let $Z \subset \ell^{p'}(\Gamma;V)$ and $Z^\perpud \subset \ell^{p}(\Gamma;V)$ be the weak annihilator, then

\begin{enumerate}\renewcommand{\labelenumi}{{\normalfont D\arabic{enumi} - }}
\item $Y = \{0\}  \ssi Y^\perp = \ell^{p'}(\Gamma;V)$ and $Y = \ell^{p}(\Gamma;V) \ssi Y^\perp = \{0\}$. 
\item $Y^\perp = (\overline{Y})^\perp$.
\item $Y_1 \subset Y_2 \imp Y_2^\perp \subset Y_1^\perp$ and $ Z_1 \subset Z_2 \imp Z_2^\perpud \subset Z_1^\perpud$.
\item $(Y_1 \oplus Y_2)^\perp = Y_1^\perp \oplus Y_2^\perp$.
\item $Z^\perpud = \{0\} \imp \srl{Z} = \ell^{p'}(\Gamma;V)$.
\item $(Y^\perp)^\perpud = \srl{Y}$.
\end{enumerate}

\par P1 is obtained quickly as a consequences of these properties (and that $\dlp \{0\} =0$). The same can be said for properties P3-P6. Invariance requires a bit of work.

\begin{prop}
P2 for $\dlp$ implies P2 for $\ddlp$. That is P2 holds for $\ddlp$ (if $p \in ]1,\infty[$) in $\ell^p$ and for $\ddl{\infty}$ in $c_0$.
\end{prop}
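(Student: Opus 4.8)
The plan is to move the statement across the annihilator correspondence $Y \mapsto Y^\perp$ and feed it into P2 for the \emph{ordinary} dimension $\dl{p'}$ in $\ell^{p'}$, which is available as a black box together with the easy properties P1 and P6 for $\dl{p'}$. Unwinding the definition, the claim $\ddlp Y_1 \le \ddlp Y_2$ reads
\[
\dim_\kk V_1 - \dl{p'} Y_1^\perp \;\le\; \dim_\kk V_2 - \dl{p'} Y_2^\perp .
\]
First I would remove the bookkeeping of the fibre dimensions: embedding $V_1$ and $V_2$ into a common $\kk^N$ (with $N$ large) realizes both $Y_1$ and $Y_2$ inside one space $\ell^p(\Gamma;\kk^N)$, and filling in the extra coordinates adds a full summand to both the ambient space and each annihilator, so by additivity (D4) and normalization for $\dl{p'}$ it changes neither $\ddlp Y_1$ nor $\ddlp Y_2$. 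After this reduction $\dim_\kk V_1 = \dim_\kk V_2 = N$ and the target is simply $\dl{p'} Y_2^\perp \le \dl{p'} Y_1^\perp$.

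Next I would produce an injective map between the annihilators. By Lemma \ref{ttypl1} and Definition \ref{dtype}, $f$ is right-convolution by a finitely supported $h \in \ell^p(\Gamma;\homo(\kk^N,\kk^N))$. Using the injectivity of $f$ and the room available in $\kk^N$, I would extend $h$ to a convolution operator $F$ on all of $\ell^p(\Gamma;\kk^N)$ which is invertible (or at least has dense image) and still satisfies $F|_{Y_1} = f$; such an $F$ is a pseudo-function. Its adjoint $F^{*}$ on $\ell^{p'}(\Gamma;\kk^{N*})$ is again a pseudo-function, is injective (by invertibility, resp. dense image of $F$), and a one-line pairing computation using $f(Y_1)\subseteq Y_2$ gives $F^{*}(Y_2^\perp)\subseteq Y_1^\perp$. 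Restricting, $F^{*}$ yields an injective $\Gamma$-equivariant finite-type (limit) map $Y_2^\perp \to Y_1^\perp$; applying P2 for $\dl{p'}$ then delivers $\dl{p'} Y_2^\perp \le \dl{p'} Y_1^\perp$, which by the first paragraph is the desired inequality.

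The main obstacle is the extension step. The naive candidate — the block adjoint coming from $h$ alone — can fail to be injective on $Y_2^\perp$ (its restriction can even be zero), so the injectivity of $f$ must be used in an essential way to build a genuinely invertible, or dense-image, extension $F$; this is where closed image of $f$ (making the inverse bounded) re-enters, exactly the hypothesis flagged in the author's apology, and one must also check that P2 for $\dl{p'}$ survives the passage from finite-type maps to their operator-norm closure (pseudo-functions). The endpoint conventions — reading $\ell^\infty$ as $c_0$ and excluding $p\in\{1,\infty\}$ — are needed precisely so that the annihilators obey the reflexive duality $(Y^\perp)^{\perpud}=\srl{Y}$ (D6) and finite-support functions stay dense, keeping the adjoint and pseudo-function machinery valid.
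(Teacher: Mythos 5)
Your skeleton --- dualize via annihilators, take adjoints, and quote P2 for $\dl{p'}$ --- is exactly the paper's, and your opening normalization to a common fibre $\kk^N$ is sound (it is the same stabilization device the paper uses, with the bookkeeping via D4, P1 and P6 done correctly). But there is a genuine gap at the step you yourself call the main obstacle: the invertible, or dense-image, finite-type extension $F$ with $F|_{Y_1}=f$ and $F(Y_1\oplus\{0\})\subseteq Y_2\oplus\{0\}$ is asserted, not constructed, and nothing in the hypotheses supplies it. Injectivity of $f$ on $Y_1$ says nothing about the ambient convolution by $h$, which can have a large kernel and a non-dense closed image (convolution by an idempotent of $\kk[\Gamma]$ for a group with torsion, restricted to its image, is the standard example); completing such an $h$ to an invertible or dense-image matrix over the convolution algebra, while keeping the stabilized $Y_1$ inside the stabilized $Y_2$, is a stable-range/$K$-theoretic type problem, not a one-line use of ``the room available in $\kk^N$''. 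Until that $F$ exists, the adjoint argument has nothing to act on, so the proof is incomplete.

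The paper's proof avoids this entirely: it never asks for invertibility, only ambient surjectivity, and it obtains surjectivity for free by enlarging the \emph{source} rather than repairing $f$. It first disposes of inclusions via D3 (so one may assume $f(Y)=Y'$), and, when the ambient map is not surjective, it sets $Z=Y\oplus\{0\}\subset\ell^p(\Gamma;V\oplus V')$ and extends $f$ by $(x,x')\mapsto f(x)+x'$, a finite-type map that is surjective onto $\ell^p(\Gamma;V')$, restricts to $f$ on $Z$, and satisfies $\ddlp Z=\ddlp Y$ by D4. The adjoint is then injective and \emph{again of finite type} --- the adjoint of convolution by a finitely supported kernel is convolution by a finitely supported kernel --- so P2 for $\dl{p'}$ applies directly to finite-type maps, and your worry about whether P2 survives the operator-norm closure (pseudo-functions) is moot. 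Two of your side remarks also import hypotheses the paper does not use here: the closed-image condition from the author's apology concerns P2 for $\dlp$ itself (in $\ell^\infty$), not this dual proposition, and for $p\in]1,\infty[$ (or $c_0$) a finite-type map is bounded on the whole space with no closedness of $f(Y_1)$ needed anywhere. One point in your favour: your equal-fibre normalization makes the final accounting transparent, since the adjoint-plus-P2 step only yields $\ddlp Y_2-\ddlp Y_1\geq\dim_\kk V_2-\dim_\kk V_1$, and the paper's sketch of the non-surjective case is terser on this fibre bookkeeping than your first paragraph; but that bookkeeping is the easy half, and the unconstructed extension is the hard half.
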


\begin{proof}
Let us proceed in three steps. First, if $Y \subset Y'$ then $Y^\perp \supset Y'^\perp$, so $\dl{p'} Y^\perp \geq \dl{p'} Y'^\perp$. This obviously implies that $\ddlp Y^\perp \leq \ddlp Y'^\perp$. So P2 holds for inclusion maps, and it is possible to assume that $f$ is actually bijective.

\par Let $f:Y \to Y'$ be bijective and of finite type. Suppose further that $f(\ell^p(\Gamma;V)) =\ell^p(\Gamma;V')$ ($f$ being of finite type, it is defined on the whole of $\ell^p$). Let $f^*: \ell^{p'}(\Gamma;V') \to \ell^p(\Gamma;V)$ be the adjoint of $f$, defined by $\pgen{f^* (y'^*) , y} = \pgen{ y'^*, f(y)}$. It is also of finite type, and, since $f(\ell^p(\Gamma;V)) = f(\ell^{p}(\Gamma;V'))$, it is injective. Thus $f^*:Y'^\perp \to Y^\perp$ is injective and of finite type so $\dl{p'} Y'^\perp \leq \dl{p'} Y^\perp$.

\par Finally, if $f(\ell^p(\Gamma;V)) \subsetneq \ell^{p}(\Gamma;V')$ then look at $Z = Y \oplus \{0\} \subset \ell^p(\Gamma;V \oplus V')$. A simple extension of $f$ to $\ell^p(\Gamma; V \oplus V')$ makes it surjective as a whole, while $f(Z) = Y'$ and $\ddlp Z = \ddlp Y$. 
\end{proof}

\par The rest of the properties are in some sort of dual relation. 

\begin{prop}
Property P7 (resp. P8', P9', P10') for $\dl{p'}$ implies property P10 (resp. P9', P8', P7) for $\ddlp$. 

Consequently, $\ddlp$ has property P8' and P10 for $p \in [2,\infty]$, property P9' for $p \in ]1,\infty[$. 
\end{prop}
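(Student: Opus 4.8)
The plan is to treat the four implications one at a time, in each case translating the statement about $\ddlp$ back into a statement about $\dl{p'}$ via the defining identity $\ddlp Y = \dim_\kk V - \dl{p'} Y^\perp$ together with the duality dictionary D1--D6. The mechanism that makes this work uniformly is that passing to the annihilator reverses inclusions (D3), commutes with closure on the appropriate side (D2), exchanges $\{0\}$ with the full space (D1), and recovers a closed subspace as $(Y^\perp)^\perpud$ (D6). A key structural point I will exploit throughout is that an annihilator $Y^\perp$ is always weak*-closed, so the hypotheses required by the maximality properties are automatically available on the dual side.

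For the implication P7 $\Rightarrow$ P10, I would start from a closed $Y$ with $\ddlp Y = \dim_\kk V$, which is exactly $\dl{p'} Y^\perp = 0$; positivity (P7) for $\dl{p'}$ then forces $Y^\perp = \{0\}$, and D6 gives $Y = (Y^\perp)^\perpud = \ell^p(\Gamma;V)$, so $Y$ has codimension $0$, which is more than P10 demands. For the dual implication P10' $\Rightarrow$ P7, from $\ddlp Y = 0$ I read off $\dl{p'} Y^\perp = \dim_\kk V$; since $Y^\perp$ is weak*-closed, weak*-maximality (P10') for $\dl{p'}$ yields $Y^\perp = \ell^{p'}(\Gamma;V)$, hence $Y = \{0\}$ by D1. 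The reverse direction of P7 is immediate from normalization (P1).

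The two continuity implications are mirror images. For P8' $\Rightarrow$ P9', given an increasing $Y_n$ with $\overline{\cup Y_n} = \ell^p(\Gamma;V)$, the sequence $Y_n^\perp$ decreases through (weak*-)closed subspaces and $\cap Y_n^\perp = (\cup Y_n)^\perp = (\overline{\cup Y_n})^\perp = \{0\}$ by D2 and D1; right-continuity at $\{0\}$ (P8') for $\dl{p'}$ then gives $\dl{p'} Y_n^\perp \to 0$, i.e. $\ddlp Y_n \to \dim_\kk V$. For P9' $\Rightarrow$ P8', given a decreasing sequence of closed $Y_n$ with $\cap Y_n = \{0\}$, the $Y_n^\perp$ increase and $(\cup Y_n^\perp)^\perpud = \cap (Y_n^\perp)^\perpud = \cap Y_n = \{0\}$ using D6 and closedness, so D5 forces $\overline{\cup Y_n^\perp} = \ell^{p'}(\Gamma;V)$; left-continuity at the full space (P9') for $\dl{p'}$ then gives $\dl{p'} Y_n^\perp \to \dim_\kk V$, i.e. $\ddlp Y_n \to 0$.

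The stated consequences then follow by substituting the known behaviour of $\dl{p'}$ from Theorem \ref{propvrai}: for $p \in [2,\infty]$ one has $p' \in [1,2]$, where P7 and P9' hold for $\dl{p'}$, yielding P10 and P8' for $\ddlp$; and P9' for $\ddlp$ on $p \in ]1,\infty[$ is obtained from P8' for $\dl{p'}$ on $p' \in ]1,\infty[$. I do not expect any single deduction to be difficult; the genuine care lies in the bookkeeping of the conventions (in particular the reading in which $\ell^\infty$ is replaced by $c_0$ on the relevant side when $p \in \{1,\infty\}$) and in verifying that each topological hypothesis---\emph{closed} versus \emph{weak*-closed}---lands on the side of the duality where the corresponding maximality or continuity property of $\dl{p'}$ can legitimately be invoked.
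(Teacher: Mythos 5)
Your proof is correct and follows essentially the same route as the paper: all four implications are obtained by passing to annihilators and invoking the dictionary D1--D6, exactly as in the paper's own proof. The one genuine divergence is in P10' $\Rightarrow$ P7, and it is in your favour: you apply weak$^*$-maximality \emph{P10'} directly (legitimately, since $Y^\perp$ is always weak$^*$-closed), concluding $Y^\perp = \ell^{p'}(\Gamma;V)$ and hence $Y=\{0\}$ by D1, whereas the paper's text actually invokes the finite-codimension form \emph{P10} and then rules out $0 \neq Y$ finite dimensional by using that a non-trivial $\Gamma$-invariant subspace of $\ell^p$ (or $c_0$) must be infinite dimensional; your version is shorter and matches the hypothesis as stated in the proposition, while the paper's buys applicability when only the finite-codimension form is available. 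One caution on your last paragraph: your derivation of \emph{P9'} for $\ddlp$ on $p \in \, ]1,\infty[$ requires \emph{P8'} for $\dl{p'}$ on $p' \in \, ]1,\infty[$, but Theorem \ref{propvrai} does \emph{not} establish P8' there (it is marked open in the table, and P8' is false at $p'=1$), so this clause does not follow from ``the known behaviour of $\dl{p'}$'' as you claim; the discrepancy originates in the proposition's own ``Consequently'' sentence, whose proof in the paper never derives that case either, but you should flag it as resting on an unproven property rather than presenting it as a substitution from Theorem \ref{propvrai}.
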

\begin{proof}
(from P7 to P10) Suppose that $\ddlp Y = \dim_\kk V$ or, equivalently, that $\dl{p'} Y^\perp = 0$. Then, as P7 is assumed to hold for $\dl{p'}$, $Y^\perp =\{0\}$, which in turns implies that $\srl{Y} = \ell^p(\Gamma;V)$. 

\par (from P10' to P7) Suppose that $\ddlp Y = 0$ or, equivalently, that $\dl{p'} Y^\perp = \dim_\kk V$. Given that P10 holds for $\dl{p'}$, $Y^\perp$ is of finite codimension in $\ell^{p'}(\Gamma,V)$, which in turns implies that $Y$ is finite dimensional (because $Y^* = X^*/Y^\perp$ is finite dimensional). Since $Y$ is $\Gamma$-invariant and $Y \subset c_0(\Gamma,V)$, $Y$ is infinite dimensional, a contradiction. 

\par (from P8' to P9') Given an increasing sequence $Y_n$ with $\srl{\cup Y_n} = \ell^p(\Gamma;V)$, then the $Y_n^\perp$ form a (closed) decreasing sequence with $\cap Y_n^\perp = \{0\}$. Indeed, if $0 \neq y^\perp \in \cap Y_n^\perp$ then $\exists x \in \ell^p(\Gamma;V)$ with $\pgen{y^\perp, x} =1$. But there is also a sequence $y_n \in Y_n$ with $y_n \to x$, which would contradict the continuity of $y^\perp$. 

\par (from P9' to P8') Recall $(Y^\perp)^\perpud = \srl{Y}$. Given a decreasing sequence of closed spaces $Y_n$ with $\cap Y_n = \{0\}$, then $Y_n^\perp$ form an increasing sequence. Let $Z = \srl{\cup Y_n^\perp}$. Then $Z^\perpud = (\cup Y_n^\perp)^\perpud = \cap \big( Y_n^{\perp^\perpud} \big) =  \cap \srl{Y_n} = \{0\}$. Consequently, $\srl{Z} = \ell^{p'}(\Gamma;V)$ and, using P9' for $\dl{p'}$ one gets that $\dl{p'} Y_n^\perp \to \dim_\kk V$. By definition, $\ddlp Y_n \to 0$.
\end{proof}

\par This argument also works to show that P9' for $\dl{\infty}$ in $\ell^\infty$ implies P8' for $\ddl{1}$, but since P9' is false for $\dl{\infty}$ in $\ell^\infty$ this is not of great interest. Actually, taking the $Y^{(1)}_n$ as in \cite[Example 4.2]{moi-lp}, it is quite easy to see that P8' fails for $\ddl{1}$ and that they are the weak annihilators of the $Y^{(\infty)}_n$ of example \ref{cexlcont}.

\section{Positivity} \label{spos}

The problem of positivity is akin to a question studied by Edgar and Rosenblatt in \cite{ER}. In the cited article they show that if $\Gamma$ is a locally compact Abelian group without compact subgroup, then any $0 \neq f \in L^p(G)$, for $p \in [1,2]$, has linearly independent translates.  The question will here again be reduced to the existence, for each $\Omega_i$, of a map $B_i:\kk^{n_i} \to Y$ with $n_i \geq c |\Omega_i|$. Such maps will be realized by looking at a map sending $\{a_i\} \in \kk^n$ to the linear combination $\sum a_i \gamma_i y$ of translates of some element $y$.

\par Since $Y \neq \{0\}$, there exists $y \in Y \subset \ell^p(\Gamma;V)$ which can be renormalized so that $\|y\|_{\ell^p(\Gamma)} =1$. Then, for $\eps_0 \in ]0,1[$, choose a set $F_{\eps_0}$ such that $\|y\|_{\ell^p(F_{\eps_0}^\comp)} \leq \eps_0$. As $y$ is normalized, $\|y\|_{\ell^p(F_{\eps_0})} \geq (1-\eps_0^p)^{1/p}$.
\par Let $R_\Omega$ be the restriction map:
\[
\begin{array}{rccc} 
R_\Omega : & \ell^p(\Gamma;V) & \to     & \ell^p(\Omega;V) \\
           &  f               & \mapsto & f \bv_\Omega
\end{array}
\]
Let $y^* \in \ell^{p'}(\Gamma;V^*)$ be such that it is  supported on $F_{\eps_0}$, $\pgen{ y^* , R_{F_{\eps_0}} y} =1$ and $\|y^*\|_{\ell^{p'}(\Gamma)} \leq (1-\eps_0^p)^{-1/p}$. Let $G_i$ be the subset (depending on $\Omega_i$, but also on $F_{\eps_0}$) obtained from lemma \ref{tcardtil}. 

To construct the ``thick'' linear subspace, introduce
\[
\begin{array}{cccc}
I:& \kk^{G_i} &\to     & \ell^p(\Gamma;V) \\       
  & \{a_\gamma\}_{\gamma \in G_i} &\mapsto & \somme{\gamma \in G_i}{} a_\gamma (\gamma y) 
\end{array}
\textrm{ and }
\begin{array}{cccc}
\pi: & \ell^p(\Gamma;V) &\to     & \kk^{G_i} \\       
     & y                &\mapsto & \{  (\gamma y^*) y \}_{\gamma \in N_i}. 
\end{array}
\] 
The map $I$ puts a $\kk^{G_i}$ in $Y$ by sending elements of the (usual) basis to translates of $y$ that are sufficiently far away from each other in the hope that their linear combinations will not interfere too much. To avoid confusion due to notations, please note that $\ell^p(|G_i|)$ is $\kk^{G_i}$ with the $\ell^p$ norm and is not to be confused with $\ell^p(G_i;V)$.

\begin{lem}\label{tpertub}
Let $Q = \pi \circ I$ be as above, let $\eps_1 := \eps_0 / (1-\eps_0^p)^{1/p}$ and let $\{e_k\}$ the usual basis of $\kk^{G_i}$, then
\[
\| Q e_k - e_k \|_{\ell^p(|G_i|)} \leq \eps_1 
\]
\end{lem}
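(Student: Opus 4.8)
The plan is to read off the matrix of $Q = \pi \circ I$ in the standard basis $\{e_k\}_{k \in G_i}$ and to show it is close to the identity column by column. First I would compute the $(\gamma,k)$-entry. Since $I(e_k) = ky$ and the $\gamma$-component of $\pi(z)$ is the scalar $\pgen{\gamma y^*, z}$, the entry is $\pgen{\gamma y^*, ky}$. Using that the $\Gamma$-action is isometric and that the dual pairing between $\ell^p$ and $\ell^{p'}$ is $\Gamma$-invariant, this equals $\pgen{y^*, (\gamma^{-1}k)y}$. On the diagonal ($\gamma = k$) it is $\pgen{y^*, y} = \pgen{y^*, R_{F_{\eps_0}}y} = 1$ by the normalization of $y^*$, recalling that $y^*$ is supported on $F_{\eps_0}$ so that pairing against $y$ only sees $R_{F_{\eps_0}}y$. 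Hence the $k$-th column of $Q e_k - e_k$ has a zero in position $k$ and entries $\pgen{y^*, (\gamma^{-1}k)y}$ for $\gamma \neq k$, so the quantity to estimate is $\somme{\gamma \neq k}{} |\pgen{y^*, (\gamma^{-1}k)y}|^p$.

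Next I would bound a single off-diagonal entry by Hölder. Because $y^*$ is supported on $F_{\eps_0}$, one has $|\pgen{y^*, (\gamma^{-1}k)y}| \leq \|y^*\|_{\ell^{p'}(\Gamma)}\, \|(\gamma^{-1}k)y\|_{\ell^p(F_{\eps_0})}$, and by isometry of translation the last factor equals $\|y\|_{\ell^p(k^{-1}\gamma F_{\eps_0})}$ (the values of $(\gamma^{-1}k)y$ on $F_{\eps_0}$ are the values of $y$ on $k^{-1}\gamma F_{\eps_0}$). Raising to the $p$-th power and summing over $\gamma \neq k$ gives $\somme{\gamma \neq k}{} \|y\|_{\ell^p(k^{-1}\gamma F_{\eps_0})}^p = \|y\|_{\ell^p(\cup_{\gamma \neq k} k^{-1}\gamma F_{\eps_0})}^p$.

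The decisive point, and the only place where the choice of $G_i$ from lemma \ref{tcardtil} enters, is disjointness. The translates $\{\gamma F_{\eps_0}\}_{\gamma \in G_i}$ are pairwise disjoint, so translating the whole family by $k^{-1}$ keeps the sets $k^{-1}\gamma F_{\eps_0}$ pairwise disjoint and, in particular, each is disjoint from $k^{-1}k F_{\eps_0} = F_{\eps_0}$. Thus $\cup_{\gamma \neq k} k^{-1}\gamma F_{\eps_0} \subset F_{\eps_0}^\comp$, and the sum above is at most $\|y\|_{\ell^p(F_{\eps_0}^\comp)}^p \leq \eps_0^p$. Feeding in $\|y^*\|_{\ell^{p'}(\Gamma)} \leq (1-\eps_0^p)^{-1/p}$ yields $\|Q e_k - e_k\|_{\ell^p(|G_i|)}^p \leq (1-\eps_0^p)^{-1}\eps_0^p$, i.e. the claimed bound $\eps_1 = \eps_0/(1-\eps_0^p)^{1/p}$.

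I expect the main obstacle to be bookkeeping rather than genuine difficulty: keeping the translation convention $(\gamma f)(\eta) = f(\gamma^{-1}\eta)$ consistent throughout, so that the equivariance identity $\pgen{\gamma y^*, ky} = \pgen{y^*, (\gamma^{-1}k)y}$ is correct and the off-diagonal mass really lands in $F_{\eps_0}^\comp$. The conceptual content sits entirely in the disjointness of the packing, which converts the potential interference between the translates $\gamma y$ into a quantity controlled by the $\ell^p$-tail of $y$ outside $F_{\eps_0}$.
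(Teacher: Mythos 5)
Your proof is correct and follows essentially the same route as the paper: a column-by-column computation of $Q e_k - e_k$, H\"older's inequality against the support $F_{\eps_0}$ of $y^*$, and the disjointness of the translates $\{\gamma F_{\eps_0}\}_{\gamma \in G_i}$ to push the off-diagonal mass into $\|y\|_{\ell^p(F_{\eps_0}^\comp)} \leq \eps_0$. The only cosmetic difference is that you translate each pairing back to the base point via $\pgen{\gamma y^*, ky} = \pgen{y^*,(\gamma^{-1}k)y}$, whereas the paper works directly with the translated pairs $\pgen{y_j^*, y_k}$; the estimates are identical.
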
 

\begin{proof}
It suffices to make a direct calculation of $ Q e_k - e_k = \sum_{j \neq k} \langle y_j^*, y_k \rangle e_j$. Indeed,
\[
\begin{array}{rcl}
\| Q e_k - e_k \|_{\ell^p(N_i)}^p 
  & \leq & \sum_{j \neq k} |y^*_j y_k |^p \\
  & \leq & \sum_{j \neq k} \|y^*_j\|_{\ell^{p'}}^p \|y_k \|_{\ell^p(\gamma_j F_{\eps_0}) }^p \\
  & \leq & (1-\eps_0^p)^{-1} \|y \|_{\ell^p(F_{\eps_0}^\comp)}^p \\
  & \leq & (1-\eps_0^p)^{-1} \eps_0^p,
\end{array}
\]
which, upon setting $\eps_1 = \eps_0 / (1-\eps_0^p)^{1/p}$, is the claim of the lemma.
\end{proof}

\begin{lem} \label{tborndim=>pos}
Let $Q$ be as above. Suppose there exists $\delta_0 \in [0,1]$ such that for every $ \delta < \delta_0$ there exists $c(\delta) \in [0,1[$ with the following property: every subspace $X^k \subset \kk^{G_i}$ of dimension $k>0$ such that $ \forall v \in X^k, \| Q v\|_{\ell^p} \leq \delta \| v \|_{\ell^p}$, satisfies $k \leq c(\delta) |G_i|$. Then 
\[
\dlp Y \geq \bigg( 1- c(0) - \inff{\delta \in [0,\delta_0] } c(\delta) \bigg) \limm{i \to \infty} \frac{\beta(\Omega_i;F_{\eps_0})}{|\Omega_i|} \geq \big( 1-c(0)-\limm{\delta \to 0} c(\delta) \big) \frac{1}{|F|^2}
\]
\end{lem}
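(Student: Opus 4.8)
The plan is to bound $\ldm_\eps(B^Y_1,\ev_{\ell^p(\Omega_i)})$ from below by transporting the problem, through the map $I$, to the linear algebra of $Q=\pi\circ I$ on $\kk^{G_i}$, and then to feed in the hypothesis on $c(\delta)$. Concretely, fix $\eps$ and $i$ and let $f:\ell^p(\Gamma;V)\to V'$ be a linear map realizing the infimum in Definition~\ref{dldm}, so that $m:=\dim_\kk V'=\ldm_\eps(B^Y_1,\ev_{\ell^p(\Omega_i)})$ and $\diam(\ker f\cap B^Y_1)\le\eps$. Set $\phi=f\circ I:\kk^{G_i}\to V'$ and $K=\ker\phi$, so $\dim K\ge|G_i|-m$. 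Since $I$ takes values in $Y$ (as $y\in Y$ and $Y$ is $\Gamma$-invariant), for every $v\in K$ the vector $Iv$ lies in $\ker f\cap Y$; normalising and using that $\ker f\cap B^Y_1$ is convex and centrally symmetric (the remark after Definition~\ref{dldm}) gives
\[
\|Iv\|_{\ell^p(\Omega_i)}\le\tfrac\eps2\,\|Iv\|_{\ell^p(\Gamma)}\qquad(v\in K).
\]

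Next I would record two norm comparisons relating $Iv$ to $v$ and to $Qv$. For the lower bound, each coordinate $(Qv)_\gamma=\pgen{\gamma y^*,Iv}$ is paired with a functional supported on $\gamma F_{\eps_0}$, so Hölder gives $|(Qv)_\gamma|\le\|y^*\|_{\ell^{p'}}\|Iv\|_{\ell^p(\gamma F_{\eps_0})}$; raising to the $p$-th power and summing over the \emph{disjoint} sets $\gamma F_{\eps_0}\subset\Omega_i$ (exactly the packing $G_i$ of Lemma~\ref{tcardtil}) yields
\[
\|Iv\|_{\ell^p(\Omega_i)}\ge(1-\eps_0^p)^{1/p}\,\|Qv\|_{\ell^p}.
\]
For the upper bound I would split $\gamma y$ into its restriction to $\gamma F_{\eps_0}$ and its tail $\gamma(y|_{F_{\eps_0}^\comp})$: the main parts have pairwise disjoint supports and contribute at most $\|v\|_{\ell^p}$, while the tail contributes $\|\sum_{\gamma\in G_i}v_\gamma\,\gamma(y|_{F_{\eps_0}^\comp})\|_{\ell^p}$.

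Combining the three displayed inequalities, for $v\in K$ one gets $\|Qv\|_{\ell^p}\le\tfrac{\eps}{2}(1-\eps_0^p)^{-1/p}\|Iv\|_{\ell^p(\Gamma)}$, and once $\|Iv\|_{\ell^p(\Gamma)}$ is replaced by a constant times $\|v\|_{\ell^p}$ this reads $\|Qv\|_{\ell^p}\le\delta(\eps)\|v\|_{\ell^p}$ with $\delta(\eps)\to0$ as $\eps\to0$. The hypothesis, applied to $K$ (restricted to the subspace on which the replacement is valid), then forces $\dim K\le c(\delta(\eps))|G_i|$, whence $m\ge(1-c(0)-c(\delta(\eps)))|G_i|$; here the extra $c(0)|G_i|$ is precisely the price of making that replacement legitimate, i.e. of discarding the directions where $I$ degenerates (Lemma~\ref{tpertub} shows $Q$ is a small perturbation of the identity, so these span a subspace of dimension at most $c(0)|G_i|$). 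Recalling $|G_i|=\beta(\Omega_i;F_{\eps_0})$, dividing by $|\Omega_i|$, taking $\limsup_{i\to\infty}$ (the limit of $\beta(\Omega_i;F_{\eps_0})/|\Omega_i|$ existing by Lemma~\ref{tcardtil}), then letting $\eps\to0$ and optimising over $\delta$ gives the first inequality; the second follows from $\inf_\delta c(\delta)\le\lim_{\delta\to0}c(\delta)$ together with $\lim_i\beta(\Omega_i;F_{\eps_0})/|\Omega_i|\ge|F_{\eps_0}|^{-2}$.

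The main obstacle is the upper bound on $\|Iv\|_{\ell^p(\Gamma)}$: the tail term $\|\sum_\gamma v_\gamma\,\gamma(y|_{F_{\eps_0}^\comp})\|_{\ell^p}$ mixes overlapping translates, and the crude triangle/Hölder estimate only yields $\eps_0\|v\|_{\ell^1}\le\eps_0|G_i|^{1-1/p}\|v\|_{\ell^p}$, which is useless once $|G_i|\to\infty$ except when $p=1$. Controlling this term by $O(\|v\|_{\ell^p})$ after discarding a subspace of dimension $\le c(0)|G_i|$ is what genuinely uses $p\in[1,2]$ and is the delicate point of the argument; everything else is bookkeeping with $\ldm$ and the Følner averages.
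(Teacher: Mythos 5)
Your skeleton is exactly the paper's: realize the width by a subspace, pull it back through $I$, use the disjointness of the translates $\gamma F_{\eps_0}$ and H\"older to get $\|Qv\|_{\ell^p} \leq \|y^*\|_{\ell^{p'}} \|Iv\|_{\ell^p(\Omega_i)}$, invoke the hypothesis at $\delta$ for the main count and at $\delta=0$ for the degenerate directions, and finish with lemma \ref{tcardtil} and the two limits. One correction of attribution before the main point: the $c(0)|G_i|$ term does not come from lemma \ref{tpertub}. Since each $\gamma y^*$ ($\gamma \in G_i$) is supported in $\gamma F_{\eps_0} \subset \Omega_i$, the vector $Qv$ depends on $Iv$ only through its restriction to $\Omega_i$; hence any subspace of $\ker(R_{\Omega_i}\circ I)$ satisfies $\|Qv\|_{\ell^p} = 0 \leq 0\cdot\|v\|_{\ell^p}$, and the hypothesis applied with $\delta = 0$ bounds its dimension by $c(0)|G_i|$. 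That is how the paper controls these directions; lemma \ref{tpertub} only gives the columnwise bound $\|(Q-\Id)v\|_{\ell^p} \leq \eps_1 \|v\|_{\ell^1}$, which cannot play this role for $p>1$.

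The gap you flag at the end is genuine and your proposed repair does not close it. You need $\|Iv\|_{\ell^p(\Gamma)} \leq C\|v\|_{\ell^p}$ with $C$ independent of $i$ (equivalently, $\|v\|_{\ell^p}$ bounded below when $Iv \in B^{Y,p}_1$) in order to convert $\|Qv\|_{\ell^p} \leq \|y^*\|_{\ell^{p'}}\eps$ into the hypothesis' condition $\|Qv\|_{\ell^p} \leq \delta\|v\|_{\ell^p}$. Discarding a subspace of dimension $\leq c(0)|G_i|$ cannot achieve this: the set of $v$ on which the comparison fails is not a subspace, and it can be large. For instance with $\Gamma = \zz$, $p=2$, if the tail $y|_{F_{\eps_0}^\comp}$ is spread thinly over an interval of length much larger than $|G_i|^3$, the Gram matrix of the translates $\{\gamma\,(y|_{F_{\eps_0}^\comp})\}_{\gamma \in G_i}$ is close to $\eps_0^2$ times the all-ones matrix, so on the constant vector $\big\|\sum_\gamma v_\gamma\, \gamma (y|_{F_{\eps_0}^\comp})\big\|_{\ell^2} \sim \eps_0 |G_i|^{1/2}\|v\|_{\ell^2}$; your crude bound $\eps_0|G_i|^{1-1/p}\|v\|_{\ell^p}$ is essentially sharp, and only $p=1$ (where $\|Iv\|_{\ell^1} \leq \|v\|_{\ell^1}$ trivially) is safe. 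It is only fair to add that the paper's own proof does not supply the missing estimate either: it normalizes $Iv \in B^{Y,p}_1$, deduces $\|Qv\|_{\ell^p} \leq \|y^*\|\eps < \delta$, and declares a contradiction, thereby tacitly identifying the normalization $\|Iv\|_{\ell^p(\Gamma)} = 1$ with $\|v\|_{\ell^p} \geq 1$ --- precisely the step you could not justify. So your diagnosis locates the delicate point correctly, but as it stands your argument (like the paper's terse version) is incomplete there; a complete proof would need either a two-sided comparison of $\|v\|_{\ell^p}$ with $\|Iv\|_{\ell^p(\Gamma)}$ valid on a subspace of codimension $o(|G_i|)$, or a reformulation of the hypothesis of the lemma with $\|Iv\|_{\ell^p(\Gamma)}$ in place of $\|v\|_{\ell^p}$, together with the matching adjustment in the proof of theorem \ref{tpos}.
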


\begin{proof}
From definition \ref{defdlp}, what must be produced is a lower bound of $\ldm_\eps (B^{Y,p}_1, \ev_{\ell^p(\Omega_i)})$. So suppose there is a linear subspace $X' \subset \ell^p(\Gamma;V)$ such that $\diam_{\ev_{\ell^p(\Omega_i)}} X' \cap B^{Y,p}_1 \leq \eps$ and $\dim X' > |\Omega_i| \dim V - \big( 1- c(0) - c(\delta) \big) |G_i|$. 

\par Then, $X' \cap \img I > c(\delta) |G_i|$, as $R_{\Omega_i} \circ I:\kk^{G_i} \to \ell^p(\Gamma;V)$ has kernel of dimension at most $c(0)|G_i|$. Consequently there exists a subspace $X \subset \kk^{G_i}$ of dimension greater than $c(\delta) |G_i|$ with $I(X) \subset X'$. Thus 
\[
\forall v \in X, \| Q v \|_{\ell^p} \leq \|\pi\|_{\ell^p(\Gamma;V) \to \kk^{G_i}} \|Iv\|_{\ell^p} \leq \|y^*\|_{\ell^\infty} \eps.
\]
\par If $\|y^*\|_{\ell^\infty} \eps < \delta$, this gives a contradiction. As a result, $\ldm_\eps (B^{Y,p}_1, \ev_{\ell^p(\Omega_i)}) \geq \big(1-c(0)-c(\delta) \big) |G_i|$ for $\eps < \delta \tfrac{ \eps_0}{(1-\eps_0^p)^{1/p}}$. Since this estimate can be redone for arbitrarily small $\eps$ (and consequently $\delta$), lemma \ref{tcardtil} gives the conclusion.
\end{proof}

The proofs of positivity for $p=1$ and $2$ can be summarized as follows. First, given a $N \times N$ matrix $M$ and any $p \in [1,\infty]$, define the two following norms
\[
\begin{array}{lcl}
\| M \|_{\ell^p(N^2)}         & = & \big( \sum |m_{ij}|^p \big)^{1/p}, \\
\| M \|_{\ell^p \to \ell^q}   & = & \supp{ \| v \|_{\ell^p(N)} = 1} \|M v\|_{\ell^q(N)}.
\end{array}
\] 

\par It must be shown that if there is a subspace $X$ of dimension $k>0$ such that $ \forall v \in X, \| Q v\|_{\ell^p} \leq \delta \| v \|_{\ell^p}$, then $k \leq c N$, for some constant $ 0 \leq c <1 $ which depends only on $\delta$ and properties of $Q$ (such as $\eps_1$ in \ref{tpertub}).
\par For $p=1$ the argument of 
\cite[Proposition 4.1]{moi-lp}) is basically to construct two opposite bounds on a norm of $M = Q - \Id$. On one hand, if $v \in X$ is as in lemma \ref{tborndim=>pos} (\ie $\|Qv\| \leq \delta \|v\|$) implies that $\| M \|_{\ell^1 \to \ell^1} \geq 1-\delta$. On the other hand,
\[
\| M v \|_{\ell^1(N)} = \| M \sum_j v_j e_j \|_{\ell^1(N)}   \leq \sum_j |v_j| \|M e_j \|_{\ell^1(N)} \leq \sum_j |v_j| \eps = \eps \|v_j\|_{\ell^1(N)},
\]   
which means that $\| M \|_{\ell^1 \to \ell^1} \leq \eps$. So $c$ can be taken to be $0$ as long as $\eps + \delta < 1$.

\par The argument for $p=2$ is actually an estimate on $\| M\|_{\ell^2(N^2)}$. The upper bound is a simple consequence of the norm bound as stated in lemma \ref{tpertub}: $\| M \|_{\ell^2(N^2)}^2 \leq N \eps^2$. The lower bound is obtained by remarking that $\| M \|_{\ell^2(N^2)}^2 = \tr M^t M$ is actually invariant under changes of orthonormal basis. If an orthonormal basis of $X$ is chosen (and completed in an orthonormal basis of $\kk^N$) then the $\ell^2$ norm of the first $k$ columns is bigger than $k^{1/2} (1-\delta)$, and consequently  $\| M \|_{\ell^2(N^2)}^2 \geq k (1-\delta)^2$. Thus $c \leq \big( \frac{\eps}{1-\delta} \big)^2$.

\par The next lemmas are amusing exercises forming the basis of positivity. 

\begin{lem}\label{ttrace}
Let $\{e'_i\}_{1 \leq i \leq N}$ be an orthogonal basis of $\kk^N$, and $M : \kk^N \to \kk^N$ be a linear map such that $M e'_i = e'_i$ for $1 \leq i \leq k$ then $k \leq \| M \|_{\ell^2(N^2)}^2$.
\end{lem}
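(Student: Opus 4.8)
The plan is to exploit the unitary (orthonormal) invariance of the Hilbert--Schmidt norm. Recall that $\| M \|_{\ell^2(N^2)}^2 = \sum_{i,j} |m_{ij}|^2 = \tr M^t M$, and the crucial point is that this quantity is independent of the orthonormal basis in which one reads off the entries $m_{ij}$: passing to another orthonormal basis replaces $M$ by $U^{-1} M U$ for some orthogonal (unitary) $U$, and $\tr\big((U^{-1}MU)^t(U^{-1}MU)\big) = \tr (U^t M^t M U) = \tr M^t M$ by cyclicity of the trace together with $U^t U = \Id$. So I am free to compute the squared norm in whichever orthonormal basis is most convenient.

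First I would reduce to an orthonormal basis. The hypothesis gives an \emph{orthogonal} basis $\{e'_i\}$, so I set $f_i = e'_i / \nr{e'_i}$; these are orthonormal, and since $M$ is linear and $M e'_i = e'_i$ for $1 \le i \le k$, the same identity holds after normalization: $M f_i = f_i$ for $1 \le i \le k$.

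Next I read off the entries of $M$ in the basis $\{f_i\}$, namely $\wt{m}_{ij} = \scal{f_i}{M f_j}$. For every index $j$ with $1 \le j \le k$ one has $M f_j = f_j$, hence $\wt{m}_{ij} = \scal{f_i}{f_j} = \delta_{ij}$; in particular the diagonal entry satisfies $\wt{m}_{jj} = 1$. The desired bound then follows by keeping only those $k$ diagonal contributions:
\[
\| M \|_{\ell^2(N^2)}^2 = \sum_{i,j} |\wt{m}_{ij}|^2 \geq \somme{j=1}{k} |\wt{m}_{jj}|^2 = k .
\]

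There is no real obstacle here; this is the elementary ``amusing exercise'' the preceding paragraph advertises. The only points requiring a word of care are the basis-independence of $\tr M^t M$ (equivalently, invariance of the Frobenius norm under orthonormal changes of basis), which is what lets me work in the adapted basis $\{f_i\}$, and the minor observation that the given basis is merely orthogonal, so one must normalize before invoking that invariance --- harmless, since fixing $e'_i$ forces $M$ to fix the normalized vector $f_i$ as well.
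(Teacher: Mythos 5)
Your proof is correct and follows essentially the same route as the paper: both rest on the identity $\| M \|_{\ell^2(N^2)}^2 = \tr M^t M$ and its invariance under orthonormal changes of basis, then extract the lower bound $k$ from the fixed vectors. Your normalization step (passing from the orthogonal $e'_i$ to orthonormal $f_i$) and the explicit diagonal computation merely spell out what the paper calls ``a simple computation.''
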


\begin{proof}
The $\ell^2(N^2)$ norm for these matrices can also be expressed by $\tr M^t M$ and is consequently independent of the choice of orthogonal basis.
As $Me_i' = e'_i$ for $1 \leq i \leq k$, a simple computation yields $k \leq \tr M^t M = \|M\|_{\ell^2(N^2)}^2$.
\end{proof}

\begin{lem}\label{tbornker}
Let $p \in [1,2]$ and let $L : \kk^N \to \kk^N$ be a linear map such that $\| L e_i - e_i \|_{\ell^p(N)} \leq \eps$ for $\{e_i\}_{1 \leq i \leq N}$ the usual basis of $\kk^N$. Then $\dim \ker L \leq \eps^2 N$. 
\end{lem}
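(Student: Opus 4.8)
The plan is to follow the $p=2$ strategy sketched just before Lemma \ref{ttrace}: set $M = L - \Id$ and sandwich the Hilbert--Schmidt norm $\|M\|_{\ell^2(N^2)}$ between an upper bound extracted from the hypothesis and a lower bound coming from $\ker L$. The one genuinely useful observation is that $M$ acts as $-\Id$ on the kernel: if $v \in \ker L$ then $Mv = Lv - v = -v$. This is exactly what lets the kernel force the Hilbert--Schmidt norm of $M$ to be large, so a small upper bound on that norm will bound $\dim\ker L$ from above.

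For the upper bound, I would first rewrite the hypothesis as $\|Me_i\|_{\ell^p(N)} \le \eps$ for each standard basis vector, i.e.\ every column of $M$ has small $\ell^p$ norm. Here is the only place the assumption $p \le 2$ enters: since $t \mapsto \|x\|_{\ell^t}$ is non-increasing on $\kk^N$, we get $\|Me_i\|_{\ell^2(N)} \le \|Me_i\|_{\ell^p(N)} \le \eps$. Summing the squared column norms and using that the $i$-th column of $M$ is $Me_i$ then gives $\|M\|_{\ell^2(N^2)}^2 = \sum_{i=1}^N \|Me_i\|_{\ell^2(N)}^2 \le N\eps^2$.

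For the lower bound I would invoke Lemma \ref{ttrace}. Writing $k = \dim\ker L$, I choose an orthonormal basis $\{e'_i\}_{1\le i\le k}$ of $\ker L$ and complete it to an orthonormal basis of $\kk^N$. Since $(-M)e'_i = e'_i$ for $1 \le i \le k$, and the $\ell^2(N^2)$ norm is insensitive to an overall sign, Lemma \ref{ttrace} applied to $-M$ yields $k \le \|-M\|_{\ell^2(N^2)}^2 = \|M\|_{\ell^2(N^2)}^2$. Combining this with the upper bound gives $\dim\ker L = k \le N\eps^2$, which is the assertion.

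I do not expect a real obstacle: the argument is essentially the $p=2$ trace computation already outlined in the text. The only points requiring care are the observation that the restriction of $M$ to $\ker L$ is $-\Id$ (and not $0$), so that Lemma \ref{ttrace} can be applied to $-M$, and the monotonicity $\|\cdot\|_{\ell^2}\le\|\cdot\|_{\ell^p}$, valid precisely because $p\le 2$, which is what converts the $\ell^p$ hypothesis into a usable $\ell^2$ column bound. This last point also explains why one should not expect the same estimate to survive for $p>2$.
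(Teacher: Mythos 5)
Your proof is correct and follows essentially the same route as the paper: the lower bound $k \le \|M\|_{\ell^2(N^2)}^2$ via Lemma \ref{ttrace} and the upper bound $\|M\|_{\ell^2(N^2)}^2 \le N\eps^2$ from the column norms together with the monotonicity $\|\cdot\|_{\ell^2} \le \|\cdot\|_{\ell^p}$ for $p \in [1,2]$. Your explicit observation that $M$ acts as $-\Id$ on $\ker L$, so that Lemma \ref{ttrace} must be applied to $-M$ (harmless since the $\ell^2(N^2)$ norm ignores the sign), is in fact slightly more careful than the paper's own one-line invocation of that lemma.
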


\begin{proof}
Let $\dim \ker L = k$ and $M = L -\Id$. Since there is an orthogonal (in the usual $\ell^2$ sense!) basis of $\kk^N$ such that the first $k$ elements actually form a basis of $\ker L$, lemma \ref{ttrace} implies $\|M\|_{\ell^2(N^2)}^2 \geq k$. 
\par On the other hand, the hypothesis means that $\|M e_i\|_{\ell^p(N)} \leq \eps$. However, for $p \in [1,2]$, $\|M e_i\|_{\ell^2(N)} \leq \|M e_i\|_{\ell^p(N)}$ thus $\|M\|_{\ell^2(N^2)}^2  = \sum_i \|M e_i\|_{\ell^2(N)}^2 \leq N \eps^2$. It follows that  $k \leq \eps^2 N$ which is the claim of the lemma.
\end{proof}


\begin{teo}\label{tpos}
Let $p \in [1,2]$ and $Y \subset \ell^p(\Gamma;V)$. If $Y \neq \{0\}$, and there is a $y \in Y$ satisfying $\| y \|_{\ell^p(F^\comp)} \leq \eps_0$ for some finite subset $F \subset \Gamma$, then $\dlp Y \geq (1-2\eps_1^2) |F|^{-2}$ where $\eps_1 = \tfrac{\eps_0}{(1-\eps_0^p)^{1/p}}$  
\end{teo}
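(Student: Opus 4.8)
The plan is to verify the hypotheses of Lemma~\ref{tborndim=>pos} for the operator $Q = \pi \circ I$ built from a normalised $y \in Y$ (so $\|y\|_{\ell^p(\Gamma)} = 1$ and $\|y\|_{\ell^p(F^\comp)} \le \eps_0$) and the associated $y^*$, where I write $N = |G_i|$. By Lemma~\ref{tpertub} the matrix of $Q$ satisfies $\|Q e_k - e_k\|_{\ell^p(N)} \le \eps_1$ in the usual basis, so it remains to produce a function $c(\delta)$, valid on some interval $[0,\delta_0)$, bounding the dimension of every subspace $X^k \subset \kk^{G_i}$ on which $\|Qv\|_{\ell^p} \le \delta\|v\|_{\ell^p}$, and to check that $c(0) = \eps_1^2$ and $\lim_{\delta \to 0} c(\delta) = \eps_1^2$. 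Granting this, Lemma~\ref{tborndim=>pos} combined with the packing estimate of Lemma~\ref{tcardtil} gives
\[
\dlp Y \ge \big(1 - c(0) - \lim_{\delta \to 0} c(\delta)\big)\,|F|^{-2} = (1 - 2\eps_1^2)\,|F|^{-2},
\]
which is precisely the claim.

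For $c(0) = \eps_1^2$ I would apply Lemma~\ref{tbornker} directly to $L = Q$: since $p \in [1,2]$ and $\|Q e_k - e_k\|_{\ell^p} \le \eps_1$, that lemma yields $\dim \ker Q \le \eps_1^2 N$. This also controls the kernel of $R_{\Omega_i} \circ I$ that appears in the proof of Lemma~\ref{tborndim=>pos}, because $\pi$ factors through the restriction $R_{\Omega_i}$ (the functional $y^*$ is supported in $F$ and the translates $\gamma F$, $\gamma \in G_i$, lie inside $\Omega_i$), so $\ker(R_{\Omega_i} \circ I) \subset \ker Q$.

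The delicate point, and the main obstacle, is the approximate kernel for $\delta > 0$. Writing $M = Q - \Id$, the embedding $\|w\|_{\ell^2} \le \|w\|_{\ell^p}$ (valid for $p \le 2$) applied column by column gives the robust upper bound $\|M\|_{\ell^2(N^2)}^2 = \sum_k \|M e_k\|_{\ell^2}^2 \le N \eps_1^2$, exactly as in the $p=2$ sketch. For the matching lower bound one wants to imitate Lemma~\ref{ttrace}: taking an $\ell^2$-orthonormal basis $e'_1, \dots, e'_k$ of $X^k$ and completing it, $\|M\|_{\ell^2(N^2)}^2 \ge \sum_{i \le k} \|M e'_i\|_{\ell^2}^2$, and on $X^k$ the identity $M e'_i = Q e'_i - e'_i$ says $M$ acts almost like $-\Id$. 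The difficulty is that $\|Q e'_i\|_{\ell^2} \le \|Q e'_i\|_{\ell^p} \le \delta \|e'_i\|_{\ell^p}$, and an $\ell^2$-unit vector may have $\ell^p$ norm as large as $N^{1/p - 1/2}$, so the naive per-vector bound $\|M e'_i\|_{\ell^2} \ge 1 - \delta\|e'_i\|_{\ell^p}$ degenerates for $p < 2$; this is precisely why Lemma~\ref{tbornker} handles only the exact kernel cleanly. The way I would resolve it is to exploit that any approximate-kernel vector must be almost reproduced by $-M$ in $\ell^p$, hence (again via $\|\cdot\|_{\ell^2} \le \|\cdot\|_{\ell^p}$) lie $\ell^2$-close to the range of $M$; since the effective dimension of that range is governed by $\|M\|_{\ell^2(N^2)}^2 \le N\eps_1^2$, this forces $\dim X^k \le c(\delta) N$ with $c(\delta) \to \eps_1^2$. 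Making this ``effective range'' estimate quantitative and, crucially, uniform in $i$ (the singular-value gaps of $Q$ need not be bounded away from $0$ along the F\o{}lner sequence) is the real content of the argument; once it is established, feeding $c(0) = \eps_1^2$ and $\lim_{\delta \to 0} c(\delta) = \eps_1^2$ into Lemma~\ref{tborndim=>pos} closes the proof.
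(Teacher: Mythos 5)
You set up exactly the right framework---Lemma \ref{tpertub} for $\|Qe_k - e_k\|_{\ell^p} \leq \eps_1$, Lemma \ref{tbornker} for $c(0) = \eps_1^2$ (and your observation that $\ker (R_{\Omega_i} \circ I) \subset \ker Q$ is fine), and Lemma \ref{tborndim=>pos} together with Lemma \ref{tcardtil} for the final bound---and you correctly locate the crux: bounding approximate kernels for $\delta > 0$, where per-vector estimates degenerate because an $\ell^2$-unit vector can have $\ell^p$-norm as large as $N^{1/p - 1/2}$. But precisely there the proposal stops being a proof. The ``effective range'' resolution you sketch runs into the very degeneracy you flagged: from $\|Qv\|_{\ell^p} \leq \delta \|v\|_{\ell^p}$ one gets that $v$ lies within $\delta\|v\|_{\ell^p}$ of $-Mv \in \img M$, but relative to $\|v\|_{\ell^2}$ this distance can again be of size $\delta N^{1/p-1/2}$, so no uniform $\ell^2$-proximity of $X^k$ to a low-dimensional space follows; moreover $\|M\|_{\ell^2(N^2)}^2 \leq N\eps_1^2$ controls the sum of squared singular values, not a rank, so passing to an ``effective dimension'' of $\img M$ requires a singular-value threshold argument that you do not supply---and your own remark that the singular-value gaps of $Q$ need not be bounded below along the F{\o}lner sequence indicates you cannot get it uniformly this way. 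The step $\lim_{\delta \to 0} c(\delta) = \eps_1^2$, which you yourself call the real content of the argument, is therefore left unproven: a genuine gap.

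The paper closes exactly this hole with an averaging device absent from your proposal. Instead of per-vector comparisons, it takes an $\ell^2$-orthonormal basis $\{f_j\}$ of $X^k$ minimizing the \emph{average} $C_{X;p} = \tfrac{1}{k}\sum_j \|f_j\|_{\ell^p(N)}^2$, and uses the uniform bound $C_{X;p} \leq 4(\tfrac{N}{k})^{\frac{2}{p}-1} + 4$: this depends only on the ratio $k/N$, not on $N$ or on any spectral data of $Q$, which dissolves your uniformity worry. One then replaces $L = Q$ by a matrix $L'$ with $X^k \subset \ker L'$ at Hilbert--Schmidt cost $\|L - L'\|_{\ell^2(N^2)}^2 \leq k\delta^2 C_{X;p}$ (this is where $\|Lf_j\|_{\ell^2} \leq \|Lf_j\|_{\ell^p} \leq \delta\|f_j\|_{\ell^p}$ enters, summed over the basis), applies Lemma \ref{ttrace} to $M' = L' - \Id$ to get $\|M'\|_{\ell^2(N^2)}^2 \geq k$, and compares with your upper bound $\|M\|_{\ell^2(N^2)} \leq N^{1/2}\eps_1$ to obtain $(\tfrac{k}{N})^{1/2}\big(1 - \delta C_{k/N;p}^{1/2}\big) \leq \eps_1$. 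This yields the dichotomy $c(\delta) = \max(c_1(\delta), c_2(\delta))$ with $c_1(\delta) \to 0$ (the regime where $C_{k/N;p} \geq \delta^{-1}$, possible only for $k/N$ small) and $c_2(\delta) = \big(\tfrac{\eps_1}{1-\delta^{1/2}}\big)^2 \to \eps_1^2$, which is the quantitative statement your argument needs to feed into Lemma \ref{tborndim=>pos}. To repair your proposal you would need this averaged basis estimate, or an equivalent substitute.
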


\begin{proof}
\par The aim is to show that lemma \ref{tborndim=>pos} applies. Let $N = |G_i|$. Let $X \subset \kk^{G_i}$ be such that $\dim X =k$ and $\forall v \in X$, $\|L v\|_{\ell^p(N)} \leq \delta \| v \|_{\ell^p(N)}$. Let $f_j$ be an orthonormal basis for $X$ which minimizes $C_{X;p} = \tfrac{1}{k} \sum_j \|f_j\|_{\ell^p(N)}^2$. Obviously, $C_{X;p} \geq 1$, but more importantly, $C_{X;p} \leq 4 (\tfrac{N}{k})^{\frac{2}{p}-1} + 4$. For $r \in [0,1]$, let 
\[
C_{r;p} = \sup \{ C_{X';p} \mid X' \subset \kk^N, \dim X' = k \geq N r \}
\] 

\par As $p \in [1,2]$, $\| L f_j \|_{\ell^2(N)} \leq \| L f_j \|_{\ell^p(N)} \leq \delta \| f_j \|_{\ell^p(N)}$. Replace $L$ by a matrix $L'$ such that $X \subset \ker L'$; the estimate $\| L -L' \|_{\ell^2(N^2)}^2 \leq k \delta^2 C_{X;p}$ follows from the last equation and the invariance of the $\ell^2(N^2)$ norm under change of orthonormal basis.

\par Let $M = L - \Id$ and $M' = L' - \Id$. Then $\|M - M'\|_{\ell^2(N^2)} = \| L - L' \|_{\ell^2(N^2)} \leq k^{1/2} \delta C_{k/N;p}^{1/2}.$

Since $\| M' \|_{\ell^2(N^2)}^2 \geq k$ by lemma \ref{ttrace}, $\| M \|_{\ell^2(N^2)} \geq k^{1/2} - k^{1/2} \delta C_{k/N;p}^{1/2}$. Together with $\| M \|_{\ell^2(N^2)} \leq N^{1/2} \eps_1$ where $\eps_1 = \tfrac{\eps_0}{(1-\eps_0^p)^{1/p}}$, this yields
\[
\big( \tfrac{k}{N} \big)^{1/2} ( 1-  \delta C_{k/N;p}^{1/2}) \leq \eps_1.
\]
So if $\frac{k}{N} > c(\delta) = \max( c_1(\delta) , c_2(\delta) )$, where $c_1(\delta) = \inf \{ r \in [0,1] \mid C_{r;p} < \delta^{-1} \} < \frac{4\delta}{1-4\delta}$ and $c_2(\delta) = \frac{\eps_1}{1-\delta^{1/2}}$, then there is a contradiction.

\par Using this $c(\delta)$ in lemma \ref{tborndim=>pos}, gives
\[
\dlp Y \geq (1- 2 \eps_1^2) \limm{i \to \infty} \frac{\beta(\Omega_i;F)}{|\Omega_i|} \geq \frac{(1- 2 \eps_1^2)}{|F|^2}. \qedhere 
\]
\end{proof}
Let us indulge in a few remarks about the above estimate. First, the estimate is trivial if $\eps_0 \geq (2^{p/2}+1)^{-1/p}$. Second, for $p=1$, it is less useful than 
\cite[Proposition 4.1]{moi-lp} which gives the bound $\dl{1} Y  \geq |F|^{-2}$ as long as $\eps_0 < 1/2$. This is not only better in the value of the dimension obtained but in the interval of admissible $\eps_0$ (as $1/2 > (2^{1/2}+1)^{-1}$). Third, suppose $|F|=1$ and $V=\kk$. Then, since the von Neumann dimension can also be defined as the projection of the Dirac mass at the unit $e_\Gamma$ then evaluated at $e_\Gamma$, one has a simple bound $\dl{2} Y \geq (1-\eps_0^2)^{1/2} $. Whereas, in the same conditions, the bound above is $\dl{2} Y \geq 1 - \tfrac{2\eps_0^2}{1-\eps_0^2}$. A Taylor expansion should easily convince the reader that the result of theorem \ref{tpos} is not optimal for $p=2$ either.

\section{Continuity, completion and maximality} \label{scont}

There are very weak types of continuity which hold without any problem, as a simple consequence of inclusions:

\par P8'' (right semi-continuity) Let $Y_n$ be an decreasing sequence of closed subspaces and let $Y = \cap Y_n$ then $\limm{n \to \infty} \dlp Y_n \geq \dlp Y$.
\par P9'' (left semi-continuity) Let $Y_n$ be an increasing sequence of subspaces and let $Y = \overline{\cup Y_n}$ then $\limm{n \to \infty} \dlp Y_n \leq \dlp Y$.

\par However, asking for a bit more is not, in full generality, possible. As  \cite[Example 4.2]{moi-lp} shows, $\dl{1}$ is not right continuous at $\{0\}$ (\ie P8' is false for $\dl{1}$). As a reminder, the pathological sequence of spaces is given by the following construction. Let $\pi_n : \ell^1(\zz;\kk) \to \ell^\infty( \zz / n \zz ; \kk)$ be defined, for $y \in \ell^1(\zz;\kk)$ and $0 \leq k < n$, by $\pi_n(y) (k) = \sum_i y(k+ni)$. Then $Y^{(1)}_n = \ker \pi_n$ satisfies $\forall n, \dl{1} Y^{(1)}_n = 1$ and $\cap_n Y^{(1)}_n = \{0\}$. Let us give another (though related) example where some type of continuity fail.

\begin{ex}\label{cexlcont}
Let $Y^{(\infty)}_n \subset \ell^\infty(\zz; \kk)$ be the linear subspace of sequence of period $n$. Being a space of dimension $n$ it is clear that $\forall n \in \nn,  \dl{\infty} Y^{(\infty)}_n =0$. 

\par Let $\yper = \cup Y^{(\infty)}_n$ be the linear subspace of all periodic sequences. Then on any interval $[-k,k] \subset \zz$, $\yper$ is not distinguishable from $\ell^\infty$. In particular, $\forall \eps \in ]0,1[, \ldm_\eps (B^{\yper}_1, \ev_{\ell^\infty([-k,k])}) = 2k +1$. As a consequence, $\dl{\infty} Y = 1$.

\par This is a counterexample to P10, as $\yper \subset \yperf$, so that $\yperf$ is closed and has the full dimension but is clearly not of finite codimension. It is also a counterexample to P9' (and P9) as an increasing sequence (take $Y'_k = \cup_{1 \leq i \leq k} Y^{(\infty)}_i$)  of null-dimensional space has a full dimensional one as a limit.
\end{ex}

\par As in \cite[Example 4.2]{moi-lp}, this phenomenon is due to the quite peculiar nature of $\ell^\infty$ and it seems plausible that it does not happen in the other spaces under consideration (not even in $c_0$). 
These two examples are, in some sense, dual: the above $Y^{(\infty)}_n$ are contained in the annihilators of the $Y^{(1)}_n$ of \cite[Example 4.2]{moi-lp}.

\begin{prop}\label{tcontlp}
Let $p \in [1,2]$ and let $Y_n$ be an increasing sequence of $\Gamma$-invariant subspaces of $\ell^p(\Gamma;V)$. If $\srl{\cup Y_n} = \ell^p(\Gamma;V)$, then $\limm{n \to \infty} \dlp Y_n = \dim V$. In other words, P9' is true for $\dlp$ in $\ell^p$ when $p \in [1,2]$.
\end{prop}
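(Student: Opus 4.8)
The upper bound requires nothing new: by P9'' (left semi-continuity) and P1, $\lim_{n} \dlp Y_n \leq \dlp \ell^p(\Gamma;V) = \dim_\kk V$, and the sequence $\dlp Y_n$ is non-decreasing so the limit exists. The entire content is therefore the reverse inequality $\lim_n \dlp Y_n \geq \dim_\kk V$, and the plan is to produce, for each $\eta>0$, an index $n$ for which the positivity machinery of Section \ref{spos} gives $\dlp Y_n \geq (1-\eta)\dim_\kk V$. The guiding idea is that density lets us insert into some $Y_n$ a whole \emph{frame} of sharply localized elements, rather than the single element used in Theorem \ref{tpos}.

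Fix a basis $\{v_1,\ldots,v_d\}$ of $V$ (with $d=\dim_\kk V$) and the dual basis $\{v_1^*,\ldots,v_d^*\}$ of $V^*$. Since $\srl{\cup Y_n}=\ell^p(\Gamma;V)$, for any $\eta'>0$ there is an $n$ and elements $y_1,\ldots,y_d\in Y_n$ with $\|y_l-\delta_{e,v_l}\|_{\ell^p(\Gamma)}<\eta'$ for all $l$. In particular each $y_l$ is concentrated at the identity: $\|y_l\|_{\ell^p(\{e\}^\comp)}<\eta'$ and $\|y_l(e)-v_l\|_V<\eta'$. This concentration at a single point is the crucial gain, since it lets us run the construction of Section \ref{spos} with $F=\{e\}$, for which $\beta(\Omega_i;\{e\})=|\Omega_i|$; thus the packing factor $\lim_i \beta(\Omega_i;F)/|\Omega_i|$ appearing in Lemma \ref{tborndim=>pos} equals $1$ rather than the lossy $|F|^{-2}$.

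I would then carry out the thick-subspace construction simultaneously over the index set $G_i\times\{1,\ldots,d\}$, where $G_i$ is the $\{e\}$-packing of $\Omega_i$, so $|G_i|=|\Omega_i|$. Taking $I:\kk^{G_i\times\{1,\ldots,d\}}\to Y_n$, $e_{(\gamma,l)}\mapsto \gamma y_l$, and $\pi$ built from the translated functionals $\gamma\,\delta_{e,v_l^*}$, the composite $Q=\pi\circ I$ has diagonal entries $\langle \delta_{e,v_l^*}, y_l\rangle\approx 1$ and off-diagonal entries $\langle v_m^*, y_l(\gamma^{-1}\gamma')\rangle$. The estimate replacing Lemma \ref{tpertub} is $\|Qe_{(\gamma,l)}-e_{(\gamma,l)}\|_{\ell^p}\leq \eps_1'$ with $\eps_1'\to 0$ as $\eta'\to 0$: the same-site, cross-direction terms ($\gamma'=\gamma$, $m\neq l$) are small because $y_l(e)\approx v_l$ and $\{v_m^*\}$ is dual to $\{v_m\}$, while the different-site terms ($\gamma'\neq\gamma$) sum to at most a constant times $\|y_l\|_{\ell^p(\{e\}^\comp)}^p\leq \eta'^p$, since the arguments $\gamma^{-1}\gamma'$ range over distinct non-identity elements. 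With this perturbation bound, the argument of Theorem \ref{tpos} applies essentially verbatim — invoking Lemma \ref{tbornker}, which is exactly where the hypothesis $p\in[1,2]$ enters through $\|\cdot\|_{\ell^2}\leq\|\cdot\|_{\ell^p}$ — and yields $\ldm_\eps(B^{Y_n}_1,\ev_{\ell^p(\Omega_i)})\geq (1-2\eps_1'^2)\,d\,|\Omega_i|$ for all small $\eps$ and large $i$. Dividing by $|\Omega_i|$ and letting $i\to\infty$ gives $\dlp Y_n\geq (1-2\eps_1'^2)\dim_\kk V$, which exceeds $(1-\eta)\dim_\kk V$ once $\eta'$ is small enough.

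The main obstacle is this perturbation estimate on $Q-\Id$: one must control \emph{simultaneously} the inter-site interference (handled, as in Lemma \ref{tpertub}, by localization of the $y_l$ and disjointness of the translates) and the new intra-site, cross-direction interference intrinsic to the $V$-valued setting (handled by biorthogonality of $\{v_l\}$ and $\{v_l^*\}$). One must also check that the F\o{}lner boundary effect — the fraction of centers $\gamma\in\Omega_i$ close to $\del\Omega_i$ whose translate $\gamma y_l$ leaks out of $\Omega_i$ — is negligible in the $\limsup_i$, so that the leading map $R_{\Omega_i}\circ I$ is nearly injective (the quantity $c(0)$ in Lemma \ref{tborndim=>pos} tends to $0$) and $G_i$ may be taken of size $(1-o(1))|\Omega_i|$. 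Once these estimates are organized exactly as in Section \ref{spos}, the conclusion follows.
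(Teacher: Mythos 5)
Your proposal is correct and follows essentially the same route as the paper: density supplies elements of $Y_n$ concentrated at the identity, and the positivity machinery of Section \ref{spos} is rerun with $F=\{e\}$, for which $\beta(\Omega_i;\{e\})=|\Omega_i|$ upgrades the packing factor $|F|^{-2}$ to $1$. The only (cosmetic) divergence is the case $\dim_\kk V>1$: where you run the explicit $d$-fold biorthogonal construction indexed by $G_i\times\{1,\ldots,d\}$, the paper encodes the same computation by viewing $\ell^p(\Gamma;V)$ as $\ell^p(\Gamma\times\zz_d;\kk)$ with $d$ functions of mass concentrated at the points $(e_\Gamma,i)$, and then likewise redoes the arguments of Theorem \ref{tpos}.
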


\begin{proof}
Assume first that $V = \kk$. Since $\srl{\cup Y_n} = \ell^p(\Gamma;V)$, there is a sequence $y_n \in Y_n$ such that $y_n \to \delta_e$. 
Using the positivity estimates of theorem \ref{tpos} and that $\beta(\Omega_i;\{e\}) = |\Omega_i|$, one gets that $\forall \eps, \exists N_\eps$ such that $\dlp Y_{N_\eps} \geq 1 - 2 \eps^2 /(1-\eps^p)^{2/p}$.
Since $\beta(\Omega_i;\{e\}) = |\Omega_i|$, one gets that $\forall \eps, \exists N_\eps$ such that $\dlp Y_{N_\eps} \geq 1 - 2 \eps^2 /(1-\eps^p)^{2/p}$.

\par If $V \neq \kk$ then consider $\zz_d$ the cyclic group of order $d = \dim_\kk V$, and let $\Gamma' = \Gamma \times \zz_d$. Let $Y_n'$ be the spaces $Y_n$ seen as in $\mysp(\Gamma';\kk)$. Although they are not $\Gamma'$-equivariant, there are, for $n$ big enough, $d$ functions whose mass $> 1- \eps_0$ at each element of $(e_{\Gamma, i})$ (here $1 \leq i \leq d$). The arguments of theorem \ref{tpos} (and its previous lemmas) apply to these spaces to give the desired estimate.   
\end{proof}

The next proposition is a small strengthening of P3.

\begin{prop}\label{tcomp}
Let $p \in [1,\infty]$, let $Y \subset Y' \subset \ell^p(\Gamma;V)$ be two $\Gamma$-invariant subspaces. If $B^{Y}_1$ is $\tau^*$-dense in $B^{Y'}_r$ (for some $r \in ]0,1]$), then $\dlp Y =\dlp Y'$. In particular,  $\dlp Y = \dlp \srl{Y}$.
\end{prop}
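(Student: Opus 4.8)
The plan is to prove the two inequalities $\dlp Y \le \dlp Y'$ and $\dlp Y' \le \dlp Y$ separately. The first is essentially free: since $Y \subset Y'$ (with the same ambient norm) we have $B^Y_1 \subset B^{Y'}_1$, and enlarging the set can only enlarge the width, because a linear $f$ realizing $\diam(\ker f \cap B^{Y'}_1)\le\eps$ also realizes $\diam(\ker f \cap B^Y_1)\le\eps$. Hence $\ldm_\eps(B^Y_1,\ev_{\ell^p(\Omega_i)}) \le \ldm_\eps(B^{Y'}_1,\ev_{\ell^p(\Omega_i)})$ for every $i,\eps$, and dividing by $|\Omega_i|$ and taking the limits gives $\dlp Y \le \dlp Y'$. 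For the reverse inequality I would first dispose of the radius $r$: since $\ker f$ is a subspace, $\ldm_\eps(sX,\ev_{\ell^p(\Omega_i)}) = \ldm_{\eps/s}(X,\ev_{\ell^p(\Omega_i)})$, and as $B^{Y'}_r = r\,B^{Y'}_1$ the factor $r$ is absorbed in the $\eps\to0$ limit; thus $\dlp Y'$ may be computed from the sets $B^{Y'}_r$. It therefore suffices to show, for each $i$ and each $\eps$, that $\ldm_\eps(B^{Y'}_r,\ev_{\ell^p(\Omega_i)}) \le \ldm_\eps(B^Y_1,\ev_{\ell^p(\Omega_i)})$.

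The core of the argument is that the \emph{same} optimal map works on both sides, and here the density hypothesis enters. Write $R=R_{\Omega_i}$ and choose $f:\ell^p(\Gamma;V)\to V'$ realizing the right-hand side, so $\dim V' = \ldm_\eps(B^Y_1,\ev_{\ell^p(\Omega_i)})$ and $\sup_{v\in\ker f\cap B^Y_1}\|Rv\|\le \tfrac\eps2$; by minimality of $\dim V'$ one may assume $f|_Y$ is onto $V'$, and, crucially, that $f$ is $\tau^*$-continuous (see the obstacle below). Fix a basis $\{e_j\}$ of $V'$ and $u_j\in Y$ with $f(u_j)=e_j$. Now take any $w\in\ker f\cap B^{Y'}_r$. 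Since $B^Y_1$ is $\tau^*$-dense in $B^{Y'}_r$, there is a net $y_\alpha\in B^Y_1$ with $y_\alpha\xrightarrow{\tau^*}w$; by $\tau^*$-continuity of $f$, writing $f(y_\alpha)=\sum_j c_{\alpha,j}e_j$ we get $c_{\alpha,j}\to 0$. Setting $\tilde y_\alpha = y_\alpha - \sum_j c_{\alpha,j}u_j \in Y$ we have $f(\tilde y_\alpha)=0$ and $\|\tilde y_\alpha\|_{\ell^p}\le 1+o(1)$, so
\[
\|R\tilde y_\alpha\|\le \tfrac\eps2\|\tilde y_\alpha\|_{\ell^p}\le \tfrac\eps2\,(1+o(1)).
\]
Because the coordinate functionals are $\tau^*$-continuous and $\Omega_i$ is finite, $R$ is $\tau^*$-to-norm continuous, so $R\tilde y_\alpha\to Rw$ and therefore $\|Rw\|\le\tfrac\eps2$. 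Thus the same $f$ certifies $\diam(\ker f\cap B^{Y'}_r)\le\eps$, giving the desired inequality, and hence $\dlp Y'\le \dlp Y$. The particular case $Y'=\srl Y$ follows since norm-density implies $\tau^*$-density, with $r=1$.

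The main obstacle is the legitimacy of choosing the width-realizing $f$ to be $\tau^*$-continuous (equivalently, $\ker f$ to be $\tau^*$-closed); without this one cannot pass $f(y_\alpha)\to f(w)$ along a merely $\tau^*$-convergent net, and an arbitrary linear $f$ need not be continuous. I would resolve this by replacing $\ker f\cap Y$ by its $\tau^*$-closure: this does not increase the codimension, and a $\tau^*$-closed subspace of finite codimension is the common kernel of finitely many predual elements, i.e. the kernel of a $\tau^*$-continuous map. It remains to check the seminorm bound survives the closure, which uses that $\|\cdot\|_{\ell^p}$ is $\tau^*$-lower semicontinuous while $\ev_{\ell^p(\Omega_i)}$ is $\tau^*$-continuous on bounded sets. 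For $p\in\,]1,\infty[$ this is transparent (the space is reflexive, $\tau^*$ is the weak topology, and by Mazur the weak closure of the convex set $\ker f\cap Y$ is its norm closure, on which the norm converges); for $p=1$ and for the $c_0$/$\ell^\infty$ cases one must instead argue on bounded sets via Goldstine/Krein–Šmulian, which is where the careful bookkeeping lies.
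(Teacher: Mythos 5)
Your first inequality, your handling of the radius $r$, and the overall shape of your second inequality (transport a width certificate from $B^{Y}_1$ to $B^{Y'}_r$ through the density hypothesis) are sound, and you have correctly located the crux in the $\tau^*$-continuity of the certifying map $f$. But your proposed repair of that crux has a genuine gap, and it occurs exactly in the cases where the proposition has content. Replacing $\ker f\cap Y$ by its $\tau^*$-closure does preserve finite codimension, as you say, but there is no reason the diameter bound survives, for two reasons. First, the closure can be catastrophically large: in $\ell^1(\Gamma)=c_0(\Gamma)^*$ the kernel of any $\phi\in\ell^\infty\setminus c_0$ is a norm-closed hyperplane which is $\tau^*$-dense, so the $\tau^*$-closure of a perfectly good width certificate can be the whole space --- ``does not increase the codimension'' is then true but vacuous, the seminorm bound being annihilated. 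Second, your tool for rescuing the bound, the $\tau^*$-continuity of $\ev_{\ell^p(\Omega_i)}$ on bounded sets, cannot be applied: a point of the $\tau^*$-closure of a convex set need not be the $\tau^*$-limit of a \emph{bounded} net from that set. Krein--\v{S}mulian and Banach--Dieudonn\'e identify the weak$^*$-closure of a convex set with its bw$^*$-closure, but the bw$^*$-closure is not the set of bounded-net limits, so the ``careful bookkeeping'' you defer is not bookkeeping; it is the missing argument, and the step as stated is false. Your Mazur argument does settle the reflexive range $p\in\,]1,\infty[$, but there $\tau^*$-density of balls reduces to norm density anyway; the proposition is invoked in the paper precisely for weak$^*$-dense proper subspaces of $\ell^1$ and $\ell^\infty$ (the spaces $Y^{(1)}_n$, $\yper$, $c_0$), where your reduction breaks. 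A further wrinkle you only half-register: in definition \ref{dldm} the certifying linear map need not even be norm-continuous, so already the first step of your upgrade requires justification.

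The paper's proof avoids all of this by never taking closures of infinite-dimensional kernels. Since the pseudo-norm $\ev_{\ell^p(\Omega_i)}$ factors through the restriction $R_{\Omega_i}$ to the \emph{finite-dimensional} space $\ell^p(\Omega_i;V)$, the width $\ldm_\eps(\cdot\,,\ev_{\ell^p(\Omega_i)})$ is computed on the finite-dimensional shadows $R_{\Omega_i}B^{Y}_1$ and $R_{\Omega_i}B^{Y'}_r$ (this is the role of the restriction map and of the linear section $s$ in the paper's proof), and the density hypothesis enters only through the elementary fact that weak$^*$ convergence implies coordinatewise, hence norm, convergence on the finite set $\Omega_i$, so that $R_{\Omega_i}(B^{Y'}_r)\subset\srl{R_{\Omega_i}(B^{Y}_1)}$, where closures cost nothing by finite-dimensional compactness. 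If you want to salvage your route, the fix is not a weak$^*$-closure of the kernel but showing the certificate may be taken of the pulled-back form $R_{\Omega_i}^{-1}(W)$ with $W\subset\ell^p(\Omega_i;V)$ --- which in substance re-proves the paper's restriction-and-section argument.
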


\begin{proof}
Roughly speaking, when restricted to a finite $\Omega \subset \Gamma$, these two spaces cannot be distinguished. More precisely, there exists a linear map, given by the restriction $R_\Omega$, and whose kernel is contained in the ``ball'' of radius $0$:
\[
R_\Omega: (B^{Y'}_r, \ev_{\ell^p(\Omega)})  \to  (R_\Omega B^{Y}_1, \ev_{\ell^p(\Omega)}).
\]
Thus, $\forall \eps \in [0,1]$, $\ldm_{\eps/r} (B^{Y'}_1, \ev_{\ell^p(\Omega)}) = \ldm_\eps(B^{Y'}_r, \ev_{\ell^p(\Omega)}) \leq \ldm_\eps(R_\Omega B^{Y}_1, \ev_{\ell^p(\Omega)})$.
\par On the other hand, let $s: R_\Omega B^{Y}_1 \to B^{Y}_1$ such that $R_\Omega \circ s = \Id$ be determined by an inverse of $R_\Omega Y \to Y$, then $s$ is a linear map which increases distances. Consequently, $\ldm_\eps(R_\Omega B^{Y}_1, \ev_{\ell^p(\Omega)}) \leq \ldm_\eps(B^{Y}_1, \ev_{\ell^p(\Omega)})$. Finally, by inclusion $Y \subset \srl{Y}$, we have \linebreak $\ldm_\eps(B^{Y}_1, \ev_{\ell^p(\Omega)}) \leq  \ldm_\eps(B^{\srl{Y}^{w^*}}_1, \ev_{\ell^p(\Omega)})$.
\end{proof}

A consequence of this remark, is if $B^Y_1$ is weak$^*$ dense in $B^{\ell^p}_r$ then $\dlp Y = \dim_\kk V$. Note that this is the case of all the spaces of full dimension exhibited ($Y^{(1)}_n$ in $\ell^1$, see \cite[Example 4.2]{moi-lp}; $\Pi$ (see example \ref{cexlcont}) or $c_0$ in $\ell^\infty$).

Let us now turn to maximality. The case $p \in [2,\infty]$ (or $c_0$) is the simplest. It is also convenient to begin with $V = \kk$.
\begin{prop}\label{maxlp+2}
Let $p \in [2,\infty[$. If $Y \subset \ell^p(\Gamma)$ (resp. $c_0(\Gamma)$) is a closed $\Gamma$-invariant space such that $\dlp Y = 1$ (resp $\dl{\infty} Y = 1$) then $Y = \ell^p(\Gamma)$ (resp. $c_0(\Gamma)$).
\end{prop}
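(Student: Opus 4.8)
The plan is to argue by contraposition: assuming that $Y \neq \ell^p(\Gamma)$ (resp. $c_0(\Gamma)$) is closed and $\Gamma$-invariant, I will show $\dlp Y < 1$, which contradicts $\dlp Y = 1$. Since $Y$ is a proper closed subspace, the Hahn--Banach theorem produces a nonzero $y^* \in Y^\perp \subset \ell^{p'}(\Gamma)$, and $Y^\perp$ is itself a closed $\Gamma$-invariant subspace. As $p \in [2,\infty[$ forces $p' \in ]1,2]$ (with $p'=1$ in the $c_0$ case), positivity (theorem \ref{tpos}) applies to $Y^\perp$ and gives $\dl{p'} Y^\perp \geq c > 0$, where $c$ is the explicit constant attached to $y^*$. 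The whole statement then reduces to the superadditivity estimate
\[
\dlp Y + \dl{p'} Y^\perp \leq \dim_\kk V = 1,
\]
since this, together with $\dl{p'}Y^\perp > 0$, forces $\dlp Y \leq 1-c < 1$.

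For the $c_0$ case this superadditivity can be proven directly, which is what makes it the simplest. Fix a finite $F$ with $\|y^*\|_{\ell^1(F^\comp)} \leq \eps_0$ and use lemma \ref{tcardtil} to pack inside $\Omega_i$ about $|G_i| \geq c|\Omega_i|$ translates $\gamma F$ that are disjoint and, by amenability, with $\gamma$ \emph{deep} inside $\Omega_i$, so that the tails $\|(\gamma y^*)|_{\Omega_i^\comp}\|_1$ are uniformly $\leq \eta$ for any prescribed $\eta$ once $i$ is large. The annihilation $\pgen{\gamma y^*, y}=0$ for $y \in B^{Y}_1$ then forces the coordinates of the restriction of $y$ along the disjointly supported system $\{\gamma y^*|_{\Omega_i}\}$ to be $\leq \eta/m_0$, where $m_0>0$ is a fixed lower bound for the norms of the generators. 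Because the sup-norm of a sum of disjointly supported functions is a maximum rather than a sum, this bounds $\|y|_{\Omega_i}\|_{\ell^\infty(\Omega_i)}$ by $C\eta$ on the subspace $L$ spanned by these translates; as $\dim L \geq c|\Omega_i|$, one obtains a linear map of rank $\leq (1-c)|\Omega_i|$ with small diameter on its kernel, i.e. $\ldm_\eps(B^{Y}_1, \ev_{\ell^\infty(\Omega_i)}) \leq (1-c)|\Omega_i|$ for every $\eps$, whence $\dl{\infty} Y \leq 1-c$.

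For finite $p \in [2,\infty[$ the same construction breaks down: the $\ell^p$-norm of a disjoint sum of $\sim |\Omega_i|$ coordinates of size $\eta$ grows like $\eta|\Omega_i|^{1/p}$, and for a generic $y^*$ with slowly decaying tail one cannot shrink $\eta$ fast enough to beat this accumulation. I would therefore establish the superadditivity at the level of F\o{}lner sets through width duality rather than by explicit packing. The point is that (by the remark after definition \ref{dldm}) $\ldm$ is a Gelfand-type width-dimension, and the Gelfand width of the restriction of $B^{Y}_1$ to $\ell^p(\Omega_i)$ equals the Kolmogorov width of its polar in $\ell^{p'}(\Omega_i)$; a direct computation identifies this polar with the $\ell^{p'}$-unit-neighbourhood of the restriction of $Y^\perp$. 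Passing between $\ldm$ and $\wdm$ via Donoho's inequality $\ldm_\eps X \leq \wdm_{\eps/2^{1+1/p}} X$, one converts this into a bound of the form $\ldm_\eps(B^{Y}_1, \ev_{\ell^p(\Omega_i)}) \leq |\Omega_i| - \ldm_{\eps'}(B^{Y^\perp}_1, \ev_{\ell^{p'}(\Omega_i)}) + o(|\Omega_i|)$, which after dividing by $|\Omega_i|$ and taking limits yields the claimed superadditivity. The case of general $V$ is then reduced to $V=\kk$ exactly as in the proof of proposition \ref{tcontlp}, by passing to $\Gamma \times \zz_d$ with $d = \dim_\kk V$.

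The main obstacle is the uniform control of the boundary/tail terms in the duality step for finite $p$: the polar identity is exact only on $\kk^{\Omega_i}$, whereas the functions realizing the widths live in $\ell^p(\Gamma)$ and may carry mass outside $\Omega_i$. Amenability (the density of deep translates furnished by lemma \ref{tcardtil}) is what keeps these errors $o(|\Omega_i|)$, and the hypotheses $p \geq 2 \geq p'$ together with the self-duality of the exponent $2$ (so that $\tfrac12 = \tfrac12(\tfrac1p + \tfrac1{p'})$, giving $\|a\|_2^2 \leq \|a\|_p\,\|a\|_{p'}$) are precisely what make the two width dimensions genuinely complementary rather than merely comparable.
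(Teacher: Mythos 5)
Your reduction of the statement to the inequality $\dlp Y + \dl{p'} Y^\perp \leq \dim_\kk V$ (plus positivity of $\dl{p'}$ on $Y^\perp$, which is indeed available from theorem \ref{tpos} since $p'\in[1,2]$) is where the genuine gap lies: this superadditivity is exactly the assertion $\dlp Y \leq \ddlp Y$, and it is proved nowhere in the paper --- on the contrary, the paper treats the interchangeability of $\ldm$ with its polar/dual widths as open (see the discussion of $\mathrm{bdim},\mathrm{tdim},\mathrm{cdim}$ and question \ref{q:egal}), and it proves P10 for $\ddlp$ by duality \emph{separately} from the present proposition precisely because no comparison between $\dlp$ and $\ddlp$ is known. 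Your sketch of the duality step does not close this: the polar identity $\big(R_{\Omega_i}B^{Y}_1\big)^\circ \approx B_{\ell^{p'}(\Omega_i)} + R_{\Omega_i}Y^\perp$ is exact only for the pairing over $\Omega_i$, while annihilation $\pgen{y^*,y}=0$ holds for the pairing over all of $\Gamma$; the discrepancy is $\pgen{y^*|_{\Omega_i^\comp}, y}$, and an arbitrary $y^*\in B^{Y^\perp}_1$ (unlike translates of one fixed, finitely-concentrated element) has no uniform tail concentration, so these errors are not $o(|\Omega_i|)$ by any estimate you invoke --- lemma \ref{tcardtil} is a packing count and controls tails only for translates of a single element deep inside $\Omega_i$. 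Moreover, Ioffe--Tikhomirov-type dualities at a fixed scale $\eps$ relate the Gelfand width of $X$ to Kolmogorov widths of $X^\circ$ at the reciprocal scale, not at a fixed small scale, so even on $\kk^{\Omega_i}$ the conversion into $\ldm_\eps(B^{Y}_1) \leq |\Omega_i| - \ldm_{\eps'}(B^{Y^\perp}_1) + o(|\Omega_i|)$ is unjustified. Since $p\in[2,\infty[$ finite is the main case of the proposition, the proof is not complete.

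For contrast, the paper's argument is a forward one and avoids $Y^\perp$ entirely: from $\dlp Y = 1$ it extracts, for large $\Omega$ in a F{\o}lner sequence, a subspace $L\subset\ell^p(\Omega)$ of dimension $\geq \alpha|\Omega|$ whose unit vectors lift to elements of $Y$ of norm $\leq \eps^{-1}$; the hypothesis $p\geq 2$ enters only through $\|\cdot\|_{\ell^p}\leq\|\cdot\|_{\ell^2}$, which combined with $\tr P_{L^\perp}\leq (1-\alpha)|\Omega|$ shows that at least half the Dirac masses are within $\sqrt{2(1-\alpha)}$ of $L$; weak compactness of the bounded lifts (using density of finitely supported functionals, valid in $\ell^p$, $p<\infty$, and $c_0$), Mazur's lemma and closedness of $Y$ then put an element of $Y$ at distance $\sqrt{2(1-\alpha)}$ from $\delta_e$, and $\alpha\to 1$ with density of translates of $\delta_e$ concludes. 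Incidentally, your direct packing argument for the $c_0$ case is essentially sound and genuinely different from the paper's unified proof: the disjointly supported functionals $(\gamma y^*)|_{\Omega_i}$ admit a projection of norm $O(1/m_0)$ \emph{because} the sup-norm of a disjoint sum is a max, and discarding the non-deep translates (a F{\o}lner-boundary fraction) keeps the packing density $c=|F|^{-2}$ independent of $\eps$, yielding $\dl{\infty}Y\leq 1-c$; as you correctly diagnose, it is exactly this disjointness bonus that disappears for finite $p$, which is why that case needs a different idea --- but the one you substitute for it is not established.
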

\begin{proof}
For any $\alpha \in ]0,1[$, there is an $\eps>0$ such that for any $\Omega$ large enough in a F{\o}lner sequence, there is a linear space $L$ of size at least $\alpha |\Omega|$ in $\ell^p(\Omega)$, such that if $y \in L$ and $\|y\| = 1$, then there is a $y' \in Y$ with $\|y'\|_{\ell^p(\Gamma)} \leq \eps^{-1}$ and $y$ and $y'$ agree on $\Omega$. Let $x_\gamma$ a point in $L$ closest to the Dirac mass $\delta_\gamma$. Note this point is actually unique (by strict convexity) unless the ambient space is $c_0$ (see \cite[Section 2.2, page 40]{BL}). This point is closer than the point that one would find assuming things happen in $\ell^2(\Omega)$:
\[
\sum \|\delta_\gamma - x_\gamma\|^2_{\ell^p(\Omega)} \leq  \sum \|P_{L^\perp} \delta_\gamma \|^2_{\ell^p(\Omega)} \leq \sum \|P_{L^\perp} \delta_{\gamma} \|^2_{\ell^2(\Omega)} = \sum \pgen{ P_{L^\perp} e_{\gamma}, P_{L^\perp} e_{\gamma} } = \tr P_{L^\perp} \leq (1-\alpha) |\Omega| 
\]
In particular, the average squared distance between a Dirac mass and $L$ is less than $1-\alpha$. This means at least half of the Dirac mass at are distance less than $\sqrt{2(1-\alpha)}$ from $L$ and this closest element is of norm (in $\ell^p(\Gamma)$) at most $\eps^{-1}$. Take an increasing sequence $y_{\Omega_n}$ of such elements, where $\Omega_n$ is a F{\o}lner sequence. Being bounded and since functions of finite support are dense in the dual (this excludes $\ell^\infty$ but not $c_0$), this sequences converges weakly to some element $y$ which at distance less than $\sqrt{2(1-\alpha)}$. However, weak convergence implies norm convergence of some convex combination (a consequence of the Hahn-Banach theorem, see \cite[Theorem 3.13]{Rud}). Thus, since $Y$ is closed, there is an element in $Y$ which is at distance $\sqrt{2(1-\alpha)}$ from the Dirac mass. Letting $\alpha \to 1$ gives a sequence which tends to the Dirac mass. Since the closure of the Dirac mass and its translates is $\ell^p$, this implies $Y = \ell^p$.
\end{proof}
For the case where $\dim_\kk V >1$, a problem seems to come up: half of the Dirac mass might lay only in the half of the space $V$. This is easily remedied. If an average (of positive numbers) is less than $c$ then a fraction $\delta$ of them is less than $c/\delta$. So it suffices to pick $\delta = \dim_\kk V /(\dim_\kk V +1)$. It will worsen the approximation by a (multiplicative) constant which does not matter so much.


The case $p \in ]1,2]$ is still unclear. The following lemma might be of help. For $p\in ]1,\infty[$, let $\maz: \ell^p \to \ell^{p'}$ be the Mazur map, defined by $\maz(f) (\gamma) = |f(\gamma)|^{p-2} f(\gamma)$.

\begin{lem}\label{tracelp}
Let $p \in ]1,\infty[$. Assume $L \subset \ell^p(\nn)$ is a closed linear space and let $\delta$ some Dirac mass. Let $x$ be the closest point to $\delta_e$ in $B^L_1$. Then
\[
\pgen{ \maz( \delta - x) , x } =0 \quad \textrm{ and } \quad \|x\|^p_{\ell^p} = \pgen{ \maz( \delta - x) + \maz(x) , x }.
\]
\end{lem}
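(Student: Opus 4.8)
The plan is to notice that the two equalities are really one. Writing out the pairing, $\pgen{\maz(x),x}=\sum_\gamma|x(\gamma)|^{p-2}x(\gamma)\,x(\gamma)=\sum_\gamma|x(\gamma)|^p=\|x\|_{\ell^p}^p$, so the second identity is obtained from the first merely by adding this tautology. Hence it suffices to prove the first equality $\pgen{\maz(\delta-x),x}=0$, and everything below is devoted to it.

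The first step is to read the left-hand side as a derivative. Set $g(t)=\tfrac1p\|\delta-tx\|_{\ell^p}^p$; since for $1<p<\infty$ the map $s\mapsto\tfrac1p|s|^p$ is $C^1$ with derivative $|s|^{p-2}s$, termwise differentiation gives $g'(t)=-\pgen{\maz(\delta-tx),x}$, so that $g'(1)=-\pgen{\maz(\delta-x),x}$. Thus the claim is exactly that $t=1$ is a critical point of $g$, i.e. that among the points $tx$ of the line $\kk x\subset L$ the closest one to $\delta$ is $x$ itself.

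The second step extracts this criticality from the minimality of $x$ in $B^L_1$. As $\|x\|_{\ell^p}\le1$, the whole real segment $\{tx:|t|\le1\}$ lies in $B^L_1$, and $x$ minimises the distance to $\delta$ over $B^L_1$, hence over this segment; so $g$ attains its minimum on $[-1,1]$ at $t=1$. When $\|x\|_{\ell^p}<1$ the value $t=1$ is interior to the admissible set $\{t:\|tx\|_{\ell^p}\le1\}$, so $g'(1)=0$ and the first identity (and with it the second) follows. In this regime $x$ is in fact the metric projection of $\delta$ onto $L$, unique by the uniform convexity and smoothness of $\ell^p$, and one even gets $\pgen{\maz(\delta-x),v}=0$ for every $v\in L$.

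The main obstacle is the boundary case $\|x\|_{\ell^p}=1$, where $t=1$ is an endpoint of the admissible interval and minimality yields only the one-sided inequality $g'(1)\le0$, i.e. $\pgen{\maz(\delta-x),x}\ge0$. Here the clean identity is equivalent to the ball constraint being inactive, i.e. to the free projection $P_L\delta$ lying in $\overline{B^L_1}$. I would handle this by the Kuhn--Tucker relation for the constrained minimiser, $\pgen{\maz(\delta-x),v}=\lambda\,\pgen{\maz(x),v}$ for all $v\in L$ with a multiplier $\lambda\ge0$, which gives $\pgen{\maz(\delta-x),x}=\lambda\|x\|_{\ell^p}^p$ and so reduces the identity to verifying $\lambda=0$. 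Showing that the constraint does not genuinely bind --- automatic for $p=2$, where the projection is norm-nonincreasing, and the point that must be secured in the $1<p<2$ maximality application --- is the delicate step on which the stated form of the lemma rests.
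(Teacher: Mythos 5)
Your reduction of the two identities to the single orthogonality relation $\pgen{\maz(\delta-x),x}=0$ (via $\pgen{\maz(x),x}=\|x\|^p_{\ell^p}$), and your proof of that relation by first-order criticality of $t\mapsto\|\delta-tx\|^p_{\ell^p}$, is essentially the paper's argument. The only difference of route is that the paper perturbs in \emph{every} direction: it asserts that $f(t)=\|\delta_e-x+tx'\|^p$ is minimal at $t=0$ for each $x'\in L$, concludes $\maz(\delta-x)\in L^\perp$ (which is exactly your interior-case remark), and then specializes to $x'=x$; you perturb only along the line $\kk x$, which suffices for the stated identity.

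The boundary case $\|x\|_{\ell^p}=1$ that you single out is a genuine subtlety, and you are right that your argument (and any argument of this shape) only yields the one-sided inequality $\pgen{\maz(\delta-x),x}\geq 0$ there, with the multiplier relation $\pgen{\maz(\delta-x),v}=\lambda\pgen{\maz(x),v}$ on $L$ and $\lambda\geq 0$ reducing the identity to $\lambda=0$, i.e.\ to the ball constraint being inactive. You should know, however, that the paper's own proof does not address this at all: its assertion that $f$ has a (two-sided) minimum at $t=0$ for every direction $x'\in L$ is legitimate only when the minimization is effectively unconstrained, that is, when the nearest point of $\delta_e$ in all of $L$ already lies in $\srl{B^L_1}$ --- automatic for $p=2$, where the metric projection onto a subspace does not increase norms, but not justified for general $p\in\,]1,\infty[$ given that the lemma minimizes over $B^L_1$, matching the nonlinear projection $P_X\colon B^{\ell^p}_1\to B^X_1$ used later in the paper. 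So the ``delicate step'' you honestly leave open is a gap, but it is one shared with (indeed silently elided by) the paper's proof; in the interior case $\|x\|_{\ell^p}<1$ your argument is complete and coincides with the paper's, and your Kuhn--Tucker bookkeeping in the boundary case is the more careful account of what minimality actually gives.
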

When $p=2$, this is trivial. If $P_L$ is the projection on $L$, then its adjoint $P_L^*$ is the inclusion $L \inj \ell^2(\nn)$. Thus $\| x \|^2_{\ell^2} = \pgen{ P_L \delta, P_L \delta} = \pgen{ \delta, P_L \delta}$. This is consistent with the above formula since, if $p=2$, $\maz$ is the identity.
\begin{proof}
Assume $x$ is as above. Then for any $x' \in L$, the function $f(t) = \| \delta_e -x + t x'\|^p$ has a minimum at $t=0$. This implies that $\maz(\delta-x) \in L^\perp$. In particular, $\pgen{ \maz(\delta-x) , x} =0$ and the conclusion follows upon remarking that $\pgen{\maz (x), x} = \|x\|^p_{\ell^p}$.  
\end{proof}
\section{Additivity and Reciprocity} \label{sadd}

Given $\Gamma$-invariant subspaces $Y_j \subset \ell^p(\Gamma;V_j)$ where $2 \leq j \leq k$ and the $V_j$ are finite dimensional normed (and without loss of generality Hilbertian) spaces, there is a natural construction of $Y = \oplus_j Y_j \subset \ell^p(\Gamma; \oplus V_j)$.

\par Sub-additivity of $\dlp$, \ie $Y = Y_1 \oplus Y_2 \imp \dlp Y \leq \dlp Y_1 + \dlp Y_2$ is a direct consequence of sub-additivity of $\ldm$: 
\[
\ldm_\eps X \leq \ldm_{2^{-1/p}\eps} X_1 + \ldm_{2^{-1/p}\eps} X_2,
\] 
for $X \subset X_1 \times X_2$ convex centrally symmetric sets in finite-dimensional $\ell^p$ spaces. This can quite readily be seen: if $L_j^{-k_j}$ is the vector space realizing the width of $X_j$ then $L_1^{-k_1} \oplus L_2^{-k_2}$ gives an upper bound for the width of $X$.
 
\par A good starting point is to study, under the pretence of amnesia, the $\ell^2$ case (as additivity holds for $\dl{2}$ since it equal to the von Neumann dimension, see \cite[Corollary A.2]{moi-lp}).

\par The upcoming lemma is only of use when one actually restricts to a \emph{finite} subset $\Omega \subset \Gamma$, though the reader can extend it to close subspaces when $\Omega$ is infinite. 

\begin{lem} \label{taddl2}
Let $Y = Y_1 \oplus Y_2$ be subspaces, $Y_j \subset \ell^2(\Gamma;V_j)$. Then $\ldm_\eps (B^{Y}_1 , \ev_{\ell^2(\Omega)}) \geq \ldm_{2\eps} (B^{Y_1}_1 , \ev_{\ell^2(\Omega)}) + \ldm_{2\eps} (B^{Y_2}_1 , \ev_{\ell^2(\Omega)})$.
\end{lem}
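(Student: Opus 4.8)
The plan is to recognize $\ldm_\eps$, in the Hilbertian setting, as a count of singular values, after which additivity becomes the triviality that a block-diagonal operator has as its singular values the union of the blocks'. First I would record the codimension reformulation already noted after Definition~\ref{dldm}: since $B^Y_1$ is convex and centrally symmetric and $\ev_{\ell^2(\Omega)}(y,y')=\|R_\Omega(y-y')\|_{\ell^2(\Omega)}$ with $R_\Omega$ the restriction map, the $\ev_{\ell^2(\Omega)}$-diameter of $L\cap B^Y_1$ equals $2\sup_{w\in L\cap B^Y_1}\|R_\Omega w\|_{\ell^2(\Omega)}=2\,\|R_\Omega|_L\|$ for every linear subspace $L\subseteq Y$, the last being the operator norm of $R_\Omega$ restricted to $L$ (from the $\ell^2(\Gamma)$-norm). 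Hence $\ldm_\eps(B^Y_1,\ev_{\ell^2(\Omega)})$ is exactly the least codimension of a subspace $L\subseteq Y$ with $\|R_\Omega|_L\|\le\eps/2$.

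Next, set $T:=R_\Omega|_Y$, a norm-$\le 1$ operator from (the closure of) $Y$ into the finite-dimensional space $\ell^2(\Omega;V)$; taking $\Omega$ finite as intended, $T$ has finite rank. By the Courant--Fischer min--max principle, $\min_{\codim L=k}\|T|_L\|$ equals the $(k{+}1)$-st singular value $\sigma_{k+1}(T)$, so the least codimension achieving $\|T|_L\|\le\eps/2$ is precisely $\#\{j:\sigma_j(T)>\eps/2\}$. Thus $\ldm_\eps(B^Y_1,\ev_{\ell^2(\Omega)})=\#\{j:\sigma_j(T)>\eps/2\}$, and likewise for $Y_1$ and $Y_2$.

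Now I would invoke the orthogonal splitting. Assuming, as permitted, that $V=V_1\oplus V_2$ is Hilbertian, the decompositions $\ell^2(\Gamma;V)=\ell^2(\Gamma;V_1)\oplus\ell^2(\Gamma;V_2)$ and $\ell^2(\Omega;V)=\ell^2(\Omega;V_1)\oplus\ell^2(\Omega;V_2)$ are orthogonal, $Y_1\perp Y_2$, and $R_\Omega$ respects the $V$-grading; hence $T=T_1\oplus T_2$ with $T_j:=R_\Omega|_{Y_j}$ block-diagonal for these orthogonal decompositions. The multiset of singular values of a block-diagonal operator is the union of those of the blocks, so
\[
\#\{j:\sigma_j(T)>\tfrac\eps2\}=\#\{j:\sigma_j(T_1)>\tfrac\eps2\}+\#\{j:\sigma_j(T_2)>\tfrac\eps2\},
\]
which reads $\ldm_\eps(B^Y_1,\ev_{\ell^2(\Omega)})=\ldm_\eps(B^{Y_1}_1,\ev_{\ell^2(\Omega)})+\ldm_\eps(B^{Y_2}_1,\ev_{\ell^2(\Omega)})$. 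Since $\ldm_{\eps'}$ is non-increasing in $\eps'$, replacing $\eps$ by $2\eps$ on the right only decreases it, which yields the stated inequality; in fact this argument proves the sharper equality at the same $\eps$.

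The only genuine care needed — the main obstacle — is setting up the singular-value count when $Y$ is infinite-dimensional and possibly non-closed. I would dispose of this by passing to closures: by completion-invariance (Proposition~\ref{tcomp}), $\ldm_\eps(B^Y_1,\ev_{\ell^2(\Omega)})=\ldm_\eps(B^{\srl Y}_1,\ev_{\ell^2(\Omega)})$, and orthogonality gives $\srl{Y_1\oplus Y_2}=\srl{Y_1}\oplus\srl{Y_2}$, so one may assume $Y,Y_1,Y_2$ complete. Finiteness of $\Omega$ then makes $T$ finite-rank, so only finitely many singular values are positive and the min--max is the elementary finite-dimensional statement applied to $T^{*}T$ (equivalently to $TT^{*}$ on the finite-dimensional $\ell^2(\Omega;V)$); for infinite $\Omega$ one would instead restrict attention to closed subspaces and a compact $T$, exactly the extension flagged in the remark preceding the lemma.
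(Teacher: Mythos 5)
Your proof is correct, and it takes a genuinely different route from the paper's. The paper works directly on a subspace $L^{-k}$ realizing $\ldm_\eps (B^{Y}_1,\ev_{\ell^2(\Omega)})$: it diagonalizes the self-adjoint operators $\pi_L\circ i_j\circ \pi_j$ on $L$, splits the eigenbasis at $\lambda\geq 1/2$ (the two spectral decompositions being exchanged by $\lambda\mapsto 1-\lambda$), and projects the top halves into $Y_1$ and $Y_2$; the threshold $1/2$ is precisely where the $2\eps$ is lost. You instead diagonalize globally, identifying $\ldm_\eps$ with the singular-value count $\#\{j : \sigma_j(R_\Omega|_{\srl{Y}})>\eps/2\}$ via Courant--Fischer and using that $R_\Omega$ is block-diagonal for the orthogonal splitting $Y_1\perp Y_2$; this yields the \emph{equality} $\ldm_\eps(B^{Y}_1,\ev_{\ell^2(\Omega)})=\ldm_\eps(B^{Y_1}_1,\ev_{\ell^2(\Omega)})+\ldm_\eps(B^{Y_2}_1,\ev_{\ell^2(\Omega)})$ at the same $\eps$, strictly sharper than the lemma, and exposes the $2\eps$ slack as an artifact of the paper's method. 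Both arguments rely equally on the ``WLOG $V$ Hilbertian'' reduction, and both transfer to general $p$ (lemma \ref{taddlp}) only through norm-comparison distortions, so nothing is sacrificed there. Two small repairs: your appeal to Proposition \ref{tcomp} is off-label, since that proposition concerns the limit quantity $\dlp$, not $\ldm_\eps$ at fixed $\Omega$ and $\eps$; but the fixed-$\eps$ identity $\ldm_\eps(B^{Y}_1,\ev_{\ell^2(\Omega)})=\ldm_\eps(B^{\srl{Y}}_1,\ev_{\ell^2(\Omega)})$ you need is immediate --- given $L\subseteq Y$ of codimension $k$ with $\|R_\Omega|_L\|\leq \eps/2$, the closure $\srl{L}$ has codimension at most $k$ in $\srl{Y}$ (as $\srl{Y}\subseteq \srl{L}+W$ for any algebraic complement $W$ of $L$ in $Y$), and $B^{L}_1$ is norm-dense in $B^{\srl{L}}_1$ while $R_\Omega$ is continuous, so the bound passes to the closure; the reverse inequality is monotonicity of $\ldm_\eps$ in the set. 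The same one-line device is needed in your min--max lower bound: the top singular vectors of $T^*T$ live in $\srl{Y}$ rather than $Y$, so a non-closed $L\subseteq Y$ of small codimension must be replaced by $\srl{L}$ before intersecting with their span. With those remarks made explicit, your argument is complete and in fact improves the constant in the lemma.
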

\begin{proof}
Abbreviate $X := R_\Omega B^{Y,2}_1$, where $R_\Omega: \ell^2 (\Gamma;V) \to \ell^2(\Omega;V)$ is the restriction map, and let us consider everything as inside the subspace $\ell^2(\Omega;V)$.  Indeed, the $\ldm$ of $B^{Y,2}_1$ with the pseudo-norm $\ev_{\ell^2(\Omega)}$ turns out that this ball is ``isometric'' (meaning fibres of diameter $0$) to $R_\Omega B^{Y,2}_1$ with the $\ell^2$ norm on $\ell^2(\Omega;V)$. For simplicity, assume that $V$ is Hilbertian (being of finite dimension this does not account for much distortion).

\par Suppose $\ldm_\eps X =k$, then let $L^{-k}$ be a subspace of codimension $k$ realizing this width, \ie such that $\diam L^{-k} \cap X \leq \eps$. The aim is to construct two subspaces $L_j$ whose intersection with $X_j := B^{Y_j,2}_1$ is small. Let $\pi_L : Y \to L$ be the projection on $L$, $\pi_j : Y \to Y_j$ the projections on the $Y_j$ and $i_j: Y_j \to Y$ the natural inclusion.
\par Each operator $\pi_L \circ i_j \circ \pi_j : L \to L$ is self adjoint and gives rises to an eigenvector basis $\{v_{j,\lambda_m}\}$ of $L$. Actually, 
\[
\pi_L \circ i_2 \circ \pi_2 v_{1,\lambda} = \pi_L (\Id_L - i_1 \circ \pi_1) v_{1,\lambda} = (1-\lambda) v_{1,\lambda},
\]
so that the decomposition is the same for $j=1,2$ up to the involution $\lambda \to 1-\lambda$. Let $L_j' = \gen{v_{j,\lambda_m} | \lambda_m \geq 1/2}$ and $L_j = \pi_j L_j'$. Obviously $\dim L_1' + \dim L_2' \geq \dim L^{-k}$ and $\dim L_j = \dim L_j'$ since $\pi_j$ is injective on $L_j'$ ($\ker \pi_L \circ i_j \circ \pi_j \cap L_j' =0$). It remains only to be shown that each $L_j \cap X_j$ has small diameter.
\par Suppose $\exists x,x' \in L_j \cap X_j$ such that $\|x-x'\|_{\ell^2} \geq \delta$. Let $l, l' \in L_j'$ such that $\pi_j l = x$ and $\pi_j l' = x'$. Then $\pi_L \circ i_j \circ \pi_j l, \pi_L \circ i_j \circ \pi_j l' \in L \cap X$, and consequently, writing $l-l' = \sum_{\lambda_m \geq 1/2} c_m v_{j,\lambda_m}$, 
\[
\eps^2 
    \geq \| \pi_L \circ i_j \circ \pi_j (l-l') \|^2 
    =    \somme{\lambda_m \geq 1/2}{} c_m^2 \lambda_m^2
    \geq \tfrac{1}{4} \somme{\lambda_m \geq 1/2}{} c_m^2
    =    \tfrac{1}{4} \| l-l'\|^2
    \geq \tfrac{1}{4} \| \pi_j (l-l') \|^2
    \geq \tfrac{\delta^2}{4}.
\] 
Thus $\diam L_j \cap X_j \leq 2 \eps$, as desired.
\end{proof}

\par An interesting comment due to B.~Hayes is that a result described in G.~Pisier's book \cite[Theorem 6.1]{Pis} can help getting quite uniform distortion bound for finite dimensional subspaces, even inside an infinite dimensional one. This might simplify the proof in the case $p \in [1,2]$. Inside finite-dimensional spaces, there is a simple distortion factor between the metrics. Thus lemma \ref{taddl2} will, with greater losses as the size is big, pass to the $\ell^p$ setting.

\begin{lem}\label{taddlp}
Let $Y_1$ and $Y_2$ be finite dimensional $\ell^p$ spaces, let $Y = Y_1 \oplus Y_2$, let $n = |\Omega| \dim_\kk V <\infty$ and let $\Delta(p) = |\frac{1}{2} - \frac{1}{p}|$. Then $\ldm_\eps (B^{Y,p}_1,\ev_{\ell^p(\Omega)}) \geq \ldm_{2 n^{\Delta(p)} \eps} ( B^{Y_1,p}_1 , \ev_{\ell^p(\Omega)}) + \ldm_{2 n^{\Delta(p)} \eps} (B^{Y_2,p}_1 ,\ev_{\ell^p(\Omega)})$.
\end{lem}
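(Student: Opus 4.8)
The plan is to deduce the $\ell^p$ statement from the Hilbertian Lemma \ref{taddl2} by exploiting that, inside a finite-dimensional space, the $\ell^p$ and $\ell^2$ norms differ only by the distortion $n^{\Delta(p)}$, which is exactly the factor appearing in the statement. As in Lemma \ref{taddl2}, because $\Omega$ is finite the restriction $R_\Omega$ turns the pseudo-norm $\ev_{\ell^p(\Omega)}$ into a genuine norm on $R_\Omega Y$ with fibres of diameter $0$, so no $\ldm$ is affected and I may work inside the single vector space $W = \ell^p(\Omega;V) = \ell^2(\Omega;V)$ of dimension $n = |\Omega| \dim_\kk V$. The only quantitative input is the sharp comparison on $\kk^n$: for $p \le 2$ one has $\|v\|_{\ell^2} \le \|v\|_{\ell^p} \le n^{\Delta(p)} \|v\|_{\ell^2}$, and for $p \ge 2$ the two norms exchange roles with the same constant. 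I will feed this into the elementary monotonicity properties of $\ldm_\eps(X,\pnc)$: it increases when the set or the norm increases, it decreases when $\eps$ increases, and it obeys the scaling rule $\ldm_\eps(\lambda X, \pnc) = \ldm_{\eps/\lambda}(X,\pnc)$.

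The key point I would try to establish is that Lemma \ref{taddl2} holds verbatim when the \emph{balls are kept as the $\ell^p$-balls} but the diameters are still measured with $\ev_{\ell^2(\Omega)}$, that is,
\[
\ldm_\eps(B^{Y,p}_1, \ev_{\ell^2(\Omega)}) \ge \ldm_{2\eps}(B^{Y_1,p}_1, \ev_{\ell^2(\Omega)}) + \ldm_{2\eps}(B^{Y_2,p}_1, \ev_{\ell^2(\Omega)}).
\]
The reason to hope for this is that the proof of Lemma \ref{taddl2} uses the shape of the ambient ball in a single place: the $\ell^2$-orthogonal projection $\pi_L$ is used to send $i_j \circ \pi_j\, l$ back inside the ambient ball via $\|\pi_L x\| \le \|x\| \le 1$, after which the eigenvalue argument ($\lambda_m \ge 1/2$) is pure Hilbert-space linear algebra and is untouched. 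Granting this $\ell^p$-ball form, the remaining conversions are \emph{metric-only}: the ambient width $\ldm_\eps(B^{Y,p}_1, \ev_{\ell^p(\Omega)})$ is compared with $\ldm_\bullet(B^{Y,p}_1, \ev_{\ell^2(\Omega)})$ on the \emph{same} set, and symmetrically each summand width is compared back on its own fixed set. Since the ball never changes, only the one unfavourable passage between $\ev_{\ell^2(\Omega)}$ and $\ev_{\ell^p(\Omega)}$ is charged, and that single passage is absorbed into the factor $2 n^{\Delta(p)} \eps$ on the right-hand side; for $p \le 2$ the downward passage is free and the loss sits on the summands, and the roles reverse for $p \ge 2$.

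The step I expect to be the genuine obstacle is exactly the projection step just invoked: the $\ell^2$-orthogonal projection contracts the $\ell^2$ norm but \emph{not} the $\ell^p$ norm, so for $x \in B^{Y_j,p}_1$ the point $\pi_L x$ need only satisfy $\|\pi_L x\|_{\ell^2} \le 1$ and may fall outside $B^{Y,p}_1$, landing merely in $n^{\Delta(p)} B^{Y,p}_1$. Handled bluntly this re-introduces the distortion a second time — changing both the ball and the metric on a round trip pays the Banach–Mazur factor twice — giving $2 n^{2\Delta(p)} \eps$ instead of the stated $2 n^{\Delta(p)} \eps$; avoiding this double charge is the whole difficulty. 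The route I would pursue is to match the ambient ball to the side on which the orthogonal projection does contract (the $\ell^2$-ball when $p \le 2$, the $\ell^p$-ball when $p \ge 2$) while keeping the summand balls in $\ell^p$, so that exactly one metric change carries the loss, and then to verify meticulously that no second factor sneaks in. If this matching resists a clean verification, the fallback is the sharper distortion estimates for finite-dimensional subspaces mentioned above, via \cite[Theorem 6.1]{Pis}, which for $p \in [1,2]$ improve on the crude $n^{\Delta(p)}$ and are designed precisely to control such projections.
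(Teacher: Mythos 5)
Your proposal correctly sets up the same reduction the paper uses (rerun Lemma \ref{taddl2} inside the finite-dimensional space $\ell^p(\Omega;V)$ and pay the distortion $n^{\Delta(p)}$ between $\ell^p$ and $\ell^2$), and your diagnosis of the obstacle is exactly right --- but your proposal stops at that obstacle, so as it stands it is not a proof. Neither of your two escape routes delivers the stated constant: ``matching the ambient ball to the contracting side'' only relocates the second charge, since for $p\leq 2$ (ambient $\ell^2$-ball, so membership of $\pi_L i_j \pi_j l$ is free) the loss reappears in the final conversion $\|x-x'\|_p \leq n^{\Delta(p)}\|x-x'\|_2$, while for $p\geq 2$ (ambient $\ell^p$-ball) it reappears at the start, because $x \in B^{Y_j,p}_1$ only gives $\|x\|_2 \leq n^{\Delta(p)}$; and the Pisier fallback controls the distortion of subspaces, not the incompatibility of the $\ell^2$-orthogonal projection with $\ell^p$ balls. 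For what it is worth, the paper's own proof is precisely the blunt route you criticize: it declares the argument ``identical to \ref{taddl2} except for the norm estimates'' and swaps norms only in the two endpoint inequalities ($\|\pi_L(x-x')\|_p\leq\eps$ at the start, $\|x-x'\|_p\geq\delta$ at the end), silently assuming $\pi_L i_j \pi_j l \in L\cap X$ --- the very membership point you flag --- so your obstacle analysis is a fair criticism of the paper's write-up as well.

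The stated single factor is nevertheless attainable, by a rearrangement you did not find: apply the $\ell^p$-diameter hypothesis to the preimages $l,l' \in L_j'$ themselves rather than to their projections. Since $\pi_L\circ i_j\circ\pi_j$ acts on $L_j'$ diagonally, in an orthonormal eigenbasis, with eigenvalues $\lambda_m\geq 1/2$, one has $\|l\|_2 \leq 2\|\pi_L i_j \pi_j l\|_2 \leq 2\|x\|_2$. For $p\leq 2$ this gives $\|l\|_2 \leq 2\|x\|_p \leq 2$ and hence $\|l\|_p \leq n^{\Delta(p)}\|l\|_2 \leq 2n^{\Delta(p)}$; for $p\geq 2$ it gives $\|l\|_2 \leq 2n^{\Delta(p)}\|x\|_p$ and hence $\|l\|_p \leq \|l\|_2 \leq 2n^{\Delta(p)}$. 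In both cases $l/(2n^{\Delta(p)})$ and $l'/(2n^{\Delta(p)})$ lie in $L\cap B^{Y,p}_1$, so the hypothesis $\diam(L\cap B^{Y,p}_1)\leq \eps$ in $\ev_{\ell^p(\Omega)}$ yields $\|l-l'\|_{\ell^p(\Omega)} \leq 2n^{\Delta(p)}\eps$, and then $\|x-x'\|_{\ell^p(\Omega)} = \|\pi_j(l-l')\|_{\ell^p(\Omega)} \leq \|l-l'\|_{\ell^p(\Omega)}$ with no further conversion: the $\ell^2$ norm is invoked only where the comparison is free in the needed direction, so the factor $n^{\Delta(p)}$ is charged exactly once. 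Note finally that for the application in Theorem \ref{tadd} the precise constant is immaterial (it is absorbed as $\eps\to 0$), so your double-charged bound $2n^{2\Delta(p)}\eps$ would actually suffice for the paper's purposes --- but it does not prove the lemma as stated.
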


\begin{proof}
The argument is identical as that of \ref{taddl2} except for the norms estimates. Indeed, $\| \pi_L (x-x') \|_p \leq \eps$ implies only that $\| \pi_L (x-x') \|_2 \leq (\un_{p \in [1,2]} + \un_{p \in ]2,\infty]} n^{ 1/2 - 1/p }) \eps$. Similarly from the assumption $\| x-x'\|_p \geq \delta$ no more than $\| x - x'\|_2 \geq (\un_{p \in [1,2[} n_j^{ 1/p - 1/2 } + \un_{p \in [2,\infty]}) \delta$ can be deduced, where $n_j = |\Omega| \dim_\kk V_j \leq n$. The corresponding conclusion becomes $\delta \leq 2 n^{\Delta(p)} \eps$.
\end{proof}

\par The upcoming proof is very close the proof of the independence of $\dlp$ on the choice of F{\o}ner sequence. An important ingredient from the latter proof (which turns out to be a corollary to some extension of the Ornstein-Weiss lemma, \cite[Theorem 5.1]{moi-lp}) is Helly's selection principle which allows the limit to be computed from a particular subsequence. To alleviate the notations, the following shorthands will now be in use: $a^Y(\Omega;\eps) = \ldm_\eps (B^{Y,p}_1, \ev_{\ell^p(\Omega)})$ and $b^Y(\Omega,\eps)$ is $|\Omega|^{-1} \ldm_\eps (B^{Y,p}_1, \ev_{\ell^p(\Omega)})$.

\begin{teo}\label{tadd}
Let $\{\Omega_i\}$ be a F{\o}ner sequence for $\Gamma$ and let $Y_j \subset \ell^p(\Gamma;V_j)$, for $j=1$ or $2$, be $\Gamma$-invariant linear subspaces. Let $Y_0 = Y_1 \oplus Y_2 \subset \ell^p(\Gamma;V_1 \oplus V_2)$ be their direct sum. Then $\dlp Y_0 = \dlp Y_1 + \dlp Y_2$.
\end{teo}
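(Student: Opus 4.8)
The plan is to establish the two inequalities separately. The bound $\dlp Y_0 \le \dlp Y_1 + \dlp Y_2$ is already available: dividing the sub-additivity estimate $\ldm_\eps X \le \ldm_{2^{-1/p}\eps}X_1 + \ldm_{2^{-1/p}\eps}X_2$ (with $X = R_{\Omega_i}B^{Y_0,p}_1$ and $X_j = R_{\Omega_i}B^{Y_j,p}_1$) by $|\Omega_i|$, passing to $\limsup_i$, and then letting $\eps \to 0$ gives it at once. Hence the whole content lies in the super-additivity inequality $\dlp Y_0 \ge \dlp Y_1 + \dlp Y_2$. Following lemma \ref{taddl2} I would first reduce to $V_1, V_2$ Hilbertian (a bounded finite-dimensional distortion), so that on every finite $\Omega$ the eigenvector/trace decomposition of the projections $\pi_L \circ i_j \circ \pi_j$ used there is available.

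The $\ell^2$ computation of lemma \ref{taddl2} already yields the exact super-additivity $a^{Y_0}(\Omega;\eps) \ge a^{Y_1}(\Omega;2\eps) + a^{Y_2}(\Omega;2\eps)$, and lemma \ref{taddlp} transports it to $\ell^p$ at the cost of replacing $2\eps$ by $2n^{\Delta(p)}\eps$, where $n = |\Omega|\dim_\kk V$. One cannot insert $\Omega = \Omega_i$ here, since $2n^{\Delta(p)}\eps \to \infty$ would force the right-hand widths to vanish. Instead I would apply lemma \ref{taddlp} only on a fixed finite set $F$ taken from the F{\o}lner sequence, where the distortion $2m^{\Delta(p)}$, with $m = |F|\dim_\kk V$, is a genuine constant. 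For a target tolerance $\eta$ set $\eps = \eps(F) := \eta / (2m^{\Delta(p)})$; dividing the resulting tile estimate by $|F|$ gives $b^{Y_0}(F;\eps(F)) \ge b^{Y_1}(F;\eta) + b^{Y_2}(F;\eta)$. Letting $F$ run to infinity along the F{\o}lner sequence, the right-hand side converges (this is the content of the Ornstein--Weiss lemma, which also underlies the independence of $\dlp$ from the F{\o}lner sequence) to $\mu^{Y_1}(\eta) + \mu^{Y_2}(\eta)$, where $\mu^{Y}(\eta) := \lim_i b^{Y}(\Omega_i;\eta)$.

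The main obstacle is to show that the left-hand side does not overshoot in the limit, namely that $\limsup_{F} b^{Y_0}(F;\eps(F)) \le \dlp Y_0$ even though the precision $\eps(F) = \eta/(2(|F|\dim_\kk V)^{\Delta(p)})$ is allowed to degrade to $0$ as $F$ grows. This is exactly where the scale-dependence of the $\ell^p$--$\ell^2$ distortion fights against the large scales needed to compute $\dlp$, and it is the place where the Helly selection principle and the Ornstein--Weiss averaging from the F{\o}lner-independence proof are indispensable: one re-tiles the large $F$ by intermediate blocks to see that a precision vanishing only polynomially in the scale cannot inflate the averaged dimension past $\mu^{Y_0}(\eps) \le \dlp Y_0$. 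Granting this, the per-tile estimate gives $\dlp Y_0 \ge \mu^{Y_1}(\eta) + \mu^{Y_2}(\eta)$ for every $\eta > 0$; since $\mu^{Y_j}(\eta)$ is monotone in $\eta$ with $\mu^{Y_j}(\eta) \to \dlp Y_j$ as $\eta \to 0$, letting $\eta \to 0$ produces $\dlp Y_0 \ge \dlp Y_1 + \dlp Y_2$ and closes the proof.
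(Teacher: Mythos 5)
Your sub-additivity half is correct and matches the paper, and your diagnosis that lemma \ref{taddlp} may only be invoked on sets of bounded size is also right. But the super-additivity half has a genuine gap, and it is exactly the step you label ``the main obstacle'' and then grant: the diagonal bound $\limsup_F b^{Y_0}(F;\eps(F)) \leq \dlp Y_0$ with $\eps(F) = \eta/\big(2(|F|\dim_\kk V)^{\Delta(p)}\big) \to 0$. Nothing in the definition gives this. Since $b^{Y_0}(F;\cdot)$ increases as the precision shrinks, the diagonal sequence dominates every fixed-precision sequence, so its $\limsup$ is automatically $\geq \dlp Y_0$; the reverse inequality is an interchange of limits ($\eps \to 0$ before $F \to \infty$, versus along a diagonal) which would require a quantitative statement on how the widths $\ldm_\eps(B^{Y_0}_1,\ev_{\ell^p(F)})$ grow when $\eps$ decays polynomially in $|F|$, and you supply none. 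Worse, the mechanism you gesture at --- re-tiling the large $F$ by intermediate blocks --- goes the wrong way: $c$-subadditivity bounds $a^{Y_0}(\eps(F),F)$ by per-block terms $a^{Y_0}(c^{\kappa}\eps(F),F')$ at a \emph{further} degraded precision, and for a fixed block $F'$, as the precision tends to $0$, the quantity $b^{Y_0}(F';\cdot)$ climbs to $\dim_\kk(R_{F'}Y_0)/|F'|$, which can equal $\dim_\kk V$ even when $\dlp Y_0 < \dim_\kk V$. For instance, for $\Gamma = \zz$ and $Y_0 = \{f \in \ell^2(\zz) : \hat f \textrm{ vanishes on } E\}$ with $E$ of positive measure, the restriction of $Y_0$ to any finite interval is everything (no nonzero trigonometric polynomial vanishes on a set of positive measure), yet $\dl{2} Y_0 = 1 - |E|/2\pi < 1$. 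So the re-tiling collapses to the trivial bound and cannot close the argument.

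The paper's proof is arranged precisely so that no diagonal upper bound on the $Y_0$ side along large sets is ever needed: the roles of the two sides are distributed in the opposite way from yours. It keeps the precision fixed at $\eps$ on the $Y_1,Y_2$ side, evaluated on the \emph{large} set $\Omega$, and uses $c$-subadditivity to break $a^{Y_j}(\eps,\Omega)$, $j=1,2$, into per-tile terms $a^{Y_j}(c^{\kappa}\eps,F_i)$ over an Ornstein--Weiss $\delta$-quasi-tiling by a fixed finite family of tiles of size at most $N(\delta)$; lemma \ref{taddlp} is then applied tile by tile, where the distortion $N^{\Delta(p)}$ is a genuine constant, so the degraded precision lands only in the argument of the $Y_0$-quantities. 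Those are controlled --- via the Helly limits $l^{Y}$ and the ``essentially increasing'' refinement of the F{\o}lner sequence, which forces $b^{Y_0}(F_i,\eps') \leq l^{Y_0}(\eps') + \delta$ on the tiles --- by the bound $l^{Y_0}(\eps') \leq \dlp Y_0$, which holds \emph{uniformly} in $\eps'$ because $l^{Y_0}$ increases to $\dlp Y_0$ as $\eps' \to 0$. In short: degraded precision is harmless where only an upper bound by $\dlp Y_0$ is required, and fatal where, as in your scheme, one needs an upper bound along large sets at vanishing precision. To repair your proposal you would essentially have to reverse it into the paper's architecture; as written, the key claim is unproven and its proposed justification degenerates.
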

\begin{proof}

\par As remarked before, $\dlp Y \leq \dlp Y_1 + \dlp Y_2$ is a rather simple consequence of the properties of $\ldm$. The rest of the proof is quite close to that of \cite[Theorem 5.1]{moi-lp}, and is consequently quite technical.

\par It is straightforward to check that the function $a$ has these four properties:
\[
\begin{array}{lll}
(\mathrm{a}) \; a \textrm{ is $\Gamma$-invariant, \ie }
          & \forall \gamma \in \Gamma,
                   & a(\eps,\gamma \Omega) = a(\eps,\Omega)\\
(\mathrm{b}) \; a \textrm{ is decreasing in $\eps$, \ie}
          & \forall \eps'\leq \eps,
                   &  a(\eps',\Omega) \geq a(\eps,\Omega) \\
(\mathrm{c}) \; a \textrm { is $K$-sublinear in $\Omega$, \ie}
          & \exists K \in \rr_{>0},
                   & a(\eps, \Omega) \leq K|\Omega|\\
(\mathrm{d}) \; a \textrm { is $c$-subadditive in $\Omega$, \ie}
          & \exists c \in ]0,1]   ,
                   & a(\eps, \Omega\cup \Omega') \leq a(c\eps, \Omega) + a(c\eps, \Omega')\\
\end{array}
\]
\par Monotonicity in $\eps$ is a direct consequence as this same property for $\ldm_\eps$ whereas $\Gamma$-invariance follow from that of $Y$. In the present case $c = 2^{-1/p}$ (as was explained at the beginning of section \ref{sadd}) and $K$ is, depending on which subspace one looks at, $\dim_\kk V_1$, $\dim_\kk V_2$ or the sum of the two (since $(B^Y,\ev_{\ell^p(\Omega)})$ always maps to $\ell^p(\Omega;V_i)$ with fibres of diameter $0$). In what follows, it is sufficient to have $K= \dim_\kk V_1 + \dim_\kk V_2$. 

\par Given a F{\o}lner sequence $\{\Omega_i\}$ there exists a subsequence $\{\Omega_i'\}$ such that $\limm{i \to \infty} b(\Omega'_i,\eps)$ converges to a decreasing function $l^Y:[0,1] \to [0,\dim_\kk V]$ such that $\limm{\eps \to 0} l^Y(\eps) = \dlp Y =: l^Y$. This is a consequence of a more general theorem, known as Helly's selection principle, concerning sequences of functions of bounded variation (\cf \cite[{\S}36.5 theorem 5, p.372]{KF}).

\par As we are dealing with a finite number of spaces, there exists a [sub]sequence $\{\Omega_i''\}$ of F{\o}lner sets such that, for $j=0,1$ or $2$, $\limm{i \to \infty} b^{Y_j}(\Omega_i'',\eps) = l^{Y_j}(\eps)$. 

\par Now, fix some $\delta \in ]0,\tfrac{1}{2}[$. Indeed, lemma \ref{paciter} ensures it is possible to find a \emph{finite} subsequence $\{F_i\}_{1 \leq i \leq n}$ of F{\o}lner sets so that  any $\Omega$ far away in the (many times refined) infinite sequence admits an $\delta$-quasi-tiling by such sets missing at most $\delta |\Omega|$ elements of $\Omega$.  It has to be stressed that $n$ does not depend on the set $\Omega$, but is valid for any big enough set in the sequence. Denote by $G_{F_j}$ the set of translates of $F_j$ obtained (some of the $G_{F_j}$ may be empty, but not all of them), and let $\Omega^{(0)}$ be the elements not covered by the quasi-tiling (so $|\Omega^{(0)}| \leq \delta |\Omega|$).

\par Hence, given $\Omega$, $\{\gamma F_i\}$ forms an $\delta$-quasi-tiling, where $1\leq i\leq n$ and $\gamma \in G_{F_i} \subset \Gamma$. Let $k = a(\Omega,\eps)$ and $L^{-k}$ be the associated linear space, and let $k_{i,\gamma}$ be the codimension of $L^{-k} + \ell^p(F_{i,\gamma}';V)$ where $F_{i,\gamma}'$ is the disjoint family extracted from the $\delta$-quasi-tiling. Then
\[
k \geq \somme{i,\gamma \in G_{F_i}}{} k_{i,\gamma} - \dim \ell^p(\Omega^{(0)};V) \geq \somme{i=1, \gamma \in G_{F_i}}{} k_i - 2 \delta K |\Omega| 
\]
where $k_i = a^{Y_0}(\gamma F_i,\eps)$. On the other hand, the $\{ \gamma F_i \}$ miss only the elements of $|\Omega^{(0)}|$, \ie
\[
\sum | \gamma F_i| \geq |\cup \gamma F_i| \geq (1-\delta) |\Omega |.
\]
This gives, using $\Gamma$-invariance (a),
\[
b^{Y_0}(\Omega,\eps) = \frac{k}{|\Omega|} 
  \geq \somme{i=1}{n} b^{Y_0}(F_i,\eps) \somme{\gamma \in G_{F_i}}{} \frac{|\gamma F_i|}{|\Omega|}  - 2 \delta K \geq \minn{1 \leq i \leq n} b^{Y_0}(F_i,\eps) (1-\delta) -2 \delta K.
\]
So the sequence of function $b^{Y_0}(\Omega_i,\eps)$ is essentially an increasing sequence (of decreasing functions). Thus, up to refining even more the sequence, it can be assumed that $b^{Y_0}(\Omega_i''',\eps) < l^{Y_0}(\eps) + \delta$.

\par Reapplying lemma \ref{paciter} to $\Omega$, a set in this F{\o}lner sequence $\{\Omega_i'''\}$, and thanks to repeated use of $c$-subadditivity (d), we have that (for $j=1,2$)
\[
\begin{array}{rll}
a^{Y_j}(\eps, \Omega)
   & \leq & \Somme{i=1}{n} \Big( \Somme{\gamma \in G_{F_i}}{} a^{Y_j} (c^{\kappa}\eps, \gamma F_i) \Big) + a^{Y_j} (c^{\kappa} \eps,\Omega^{(0)}),
\end{array}
\]
where $\kappa = n+ \somme{i=1}{n}|G_{F_i}|$. Using $\Gamma$-invariance (a), the fact that these functions are decreasing in $\eps$ (b), and the $K$-sublinear property (c), this inequality yields
\[
a^{Y_j} (\eps, \Omega) \leq \somme{i=1}{n} \Big( \somme{\gamma \in G_{F_i}}{} a^{Y_j} (c^{\kappa} \eps, F_i) \Big) +K |\Omega^{(0)}|.
\]
Lemma \ref{taddlp}, with $N= \maxx{1 \leq i \leq n} |F_i|$, gives
\[
a^{Y_1}(\eps, \Omega) + a^{Y_2}(\eps, \Omega) \leq \somme{i=1}{n} \Big( \somme{\gamma \in G_{F_i}}{} a^{Y_0} ( N^{-\Delta(p)} c^\kappa \eps, F_i) \Big) + 2 K |\Omega^{(0)}|
\]
On one hand, $|\Omega^{(0)}|\leq \delta |\Omega|$  and $\frac{ a^{Y_0} ( c^\kappa \eps, F_i)}{|F_i|} \leq l^{Y_0} (c^\kappa \eps)+\delta $. Thence,
\[
\begin{array}{rll}
\fr{a^{Y_1}(\eps, \Omega)}{|\Omega|} + \fr{a^{Y_2}(\eps, \Omega)}{|\Omega|}
  & \leq & \Somme{i,\gamma \in G_i}{} \fr{ a^{Y_0}(  N^{-\Delta(p)} c^\kappa \eps, F_i)}{|F_i|} \fr{|\gamma F_i|}{|\Omega|} +2K\fr{|\Omega^{(0)}|}{|\Omega|} \\
  & \leq & \big( l^{Y_0} (  N^{-\Delta(p)} c^\kappa \eps) + \delta \big) \Somme{i,\gamma \in G_{F_i}}{} \fr{|\gamma F_i|}{|\Omega|}+ 2K \delta
\end{array}
\]
On the other hand, the $\{ \gamma F_i \}$ are $\delta$-disjoint. Thus
\[
(1-\delta) \sum | \gamma F_i| \leq |\cup \gamma F_i| \leq |\Omega |.
\]
This shows that, $\forall \eps>0$ and for any $\Omega$ big enough,
\[
b^{Y_1}(\eps, \Omega) + b^{Y_2}(\eps, \Omega) \leq \big( l^{Y_0} (  N^{-\Delta(p)} c^\kappa ) + \delta \big) \somme{i,\gamma \in G_i}{} \fr{|\gamma F_i|}{|\Omega|}+ 2K \delta \leq \fr{  l^{Y_0} (  N^{-\Delta(p)} c^\kappa \eps ) + \delta }{1-\delta} + 2K \delta
\]
Lastly, for $j=1$ or $2$, take $\eps$ and $\Omega$ so that $l^{Y_j}  - \delta \leq b^{Y_j}(\eps,\Omega)$. This means that
\[
l^{Y_1}  + l^{Y_2}  \leq \fr{  l^{Y_0} (  N^{-\Delta(p)} c^\kappa \eps ) + \delta }{1-\delta} + 2(K+1) \delta.
\]
At this point, the only place where $\Omega$ still (indirectly) appears is in $\kappa$. However, taking $\eps \to 0$ gives that $\forall \delta \in ]0,\tfrac{1}{2} [$,
\[
\dlp Y_1 + \dlp Y_2 \leq \frac{\dlp Y_0 +\delta}{1-\delta} + 2(K+1) \delta
 \leq \dlp Y_0 + 4 \delta (K+1)
\qedhere
\]
\end{proof}

\par Reciprocity (P5) can be obtained as a simple consequence of additivity and reduction. Let us briefly recall the definition of the induced and reduced subspace.
\begin{defi}\label{dinduc}
Let $\Gamma_1 \subset \Gamma_2$ be an (amenable) subgroup and $Y_1 \subset \ell^p(\Gamma_1;V)$ a $\Gamma_1$-invariant subspace. The induced subspace $Y_2 \subset \ell^p(\Gamma_2;V)$ is
\[
Y_2 = \{ y \in \ell^p(\Gamma_2;V) \mid \forall g \in \Gamma_2 /\Gamma_1, \big( \gamma \mapsto y(\gamma g) \big) \in Y_1 \subset \ell^p(\Gamma_1;V) \}
\]
Given $Y_2 \subset \ell^p(\Gamma_2;V)$, the reduced subspace $Y_1 \subset \ell^p(\Gamma_1;V^{[\Gamma_2:\Gamma_1]})$ is 
\[
Y_1 = \big\{ y \in \ell^p(\Gamma_1;V^{[\Gamma_2:\Gamma_1]}) \mid \big( \gamma_2 \mapsto y(\gamma_1)(g) \big) \in Y_2 \textrm{ où } \gamma_2 = \gamma_1 g \big\}
\]
\end{defi}

\begin{prop}\label{tredu}
If $\Gamma_1 \subset \Gamma_2$ is a subgroup of finite index, and let $Y_2 \subset \ell^p(\Gamma_2;V)$. If $Y_1 \subset \ell^p(\Gamma_1;V^{[\Gamma_2:\Gamma_1]})$ is the reduced subspace then $[\Gamma_2:\Gamma_1] \ell^p (Y,\Gamma_2) = \ell^p (Y,\Gamma_1)$. 
\end{prop}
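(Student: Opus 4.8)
The plan is to realize the reduced subspace through an explicit $\Gamma_1$-equivariant isometry and then match the two dimension computations over compatible F{\o}lner sequences. Write $m = [\Gamma_2:\Gamma_1]$ and fix representatives $g_1,\dots,g_m$ so that every $\gamma_2 \in \Gamma_2$ is \emph{uniquely} $\gamma_2 = \gamma_1 g_j$ with $\gamma_1 \in \Gamma_1$. Equip $V^m = V^{[\Gamma_2:\Gamma_1]}$ with the $\ell^p$-sum norm $\|(v_1,\dots,v_m)\|^p = \somme{j}{} \|v_j\|_V^p$ and define $\Phi : \ell^p(\Gamma_2;V) \to \ell^p(\Gamma_1;V^m)$ by $(\Phi \tilde{y})(\gamma_1) = \big( \tilde{y}(\gamma_1 g_1), \dots, \tilde{y}(\gamma_1 g_m) \big)$. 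A direct check using $\somme{\gamma_2}{} = \somme{\gamma_1}{}\somme{j}{}$ shows that $\Phi$ is a surjective isometry; it is $\Gamma_1$-equivariant because left multiplication by $\gamma_1 \in \Gamma_1$ only moves the $\Gamma_1$-coordinate and fixes the coset index $j$. By Definition \ref{dinduc}, $\Phi$ carries $Y_2$ exactly onto the reduced subspace $Y_1$, and being isometric it carries $B^{Y_2}_1$ onto $B^{Y_1}_1$.

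Next I would relate the F{\o}lner data. Let $\{\Omega_i\}$ be a F{\o}lner sequence for $\Gamma_1$ and set $\Omega_i' = \union{j=1}{m} \Omega_i g_j$. If $\gamma_1 g_j = \gamma_1' g_k$ with $\gamma_1,\gamma_1' \in \Gamma_1$ then $g_k g_j^{-1} \in \Gamma_1$, forcing $j=k$; hence the translates $\Omega_i g_j$ are pairwise disjoint and $|\Omega_i'| = m|\Omega_i|$. A standard finite-index argument shows $\{\Omega_i'\}$ is a F{\o}lner sequence for $\Gamma_2$. Since $\dlp$ is independent of the choice of F{\o}lner sequence (\cite[Corollary 5.2]{moi-lp}), I may compute $\dlp(Y_2,\Gamma_2)$ along $\{\Omega_i'\}$.

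The crux is that $\Phi$ intertwines the two evaluation pseudo-norms. A point of $\Omega_i'$ has the form $\gamma_1 g_j$ with $\gamma_1 \in \Omega_i$, and under $\Phi$ the value $\tilde{y}(\gamma_1 g_j)$ is precisely the $j$-th coordinate of $(\Phi \tilde{y})(\gamma_1)$. Summing over $j$ and over $\gamma_1 \in \Omega_i$ gives $\ev_{\ell^p(\Omega_i')}(\tilde{y},\tilde{y}') = \ev_{\ell^p(\Omega_i)}(\Phi\tilde{y}, \Phi\tilde{y}')$. Thus $(B^{Y_2}_1, \ev_{\ell^p(\Omega_i')})$ and $(B^{Y_1}_1, \ev_{\ell^p(\Omega_i)})$ are isometric pairs, so $\ldm_\eps (B^{Y_2}_1, \ev_{\ell^p(\Omega_i')}) = \ldm_\eps (B^{Y_1}_1, \ev_{\ell^p(\Omega_i)})$ for every $\eps$. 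Dividing by $|\Omega_i'| = m|\Omega_i|$ and taking $\limsup_i$ and then $\eps \to 0$ yields $\dlp(Y_2,\Gamma_2) = \tfrac{1}{m}\dlp(Y_1,\Gamma_1)$, which is exactly $[\Gamma_2:\Gamma_1]\,\dlp(Y_2,\Gamma_2) = \dlp(Y_1,\Gamma_1)$.

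The only genuinely nontrivial step is verifying that $\{\Omega_i'\}$ is a F{\o}lner sequence for $\Gamma_2$; everything else is bookkeeping about cosets and an isometry. This is standard for finite-index subgroups, but one does have to control $\del_F \Omega_i'$ for finite $F \subset \Gamma_2$ in terms of a $\Gamma_1$-boundary of $\Omega_i$ (enlarging $F$ to absorb the finitely many representatives $g_j$ and their inverses). I would also note that the $\ell^p$-sum norm on $V^m$ is chosen purely to make $\Phi$ an exact isometry: as $V^m$ is finite dimensional, any equivalent norm alters $\ev$ by bounded factors and hence leaves the $\eps \to 0$ limit defining $\dlp$ unchanged, so the conclusion is insensitive to this choice.
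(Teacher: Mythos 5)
Your proposal is correct and takes essentially the same route as the paper: the paper's (one-line) proof also passes from a F{\o}lner sequence $\{\Omega^{(1)}_i\}$ for $\Gamma_1$ to $\{\Omega^{(1)}_i G\}$ in $\Gamma_2$ and observes that $(B^{Y_2}_1, \ev_{\Omega^{(2)}_i})$ is ``isometric'' to $(B^{Y_1}_1, \ev_{\Omega^{(1)}_i})$, which is exactly your unfolding map $\Phi$ together with the disjointness of the right cosets. Your added verifications (the $\Gamma_1$-equivariant isometry, $|\Omega_i'| = m|\Omega_i|$, the F{\o}lner property of $\Omega_i'$ via controlling $\del_F \Omega_i'$ by a $\Gamma_1$-boundary, and the appeal to independence of the F{\o}lner sequence) are precisely the details the paper leaves implicit.
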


\begin{proof}
Let $\{\Omega^{(1)}_i \}$ be a F{\o}lner sequence for $\Gamma_1$ and let $\{\Omega^{(2)}_i \} = \{\Omega^{(1)}_i G \}$ be the corresponding F{\o}lner sequence in $\Gamma_2$. Then $(B^{Y_2}_1 , \ev_{\Omega_i^{(2)}})$ is, by construction, ``isometric'' to $(B^{Y_1}_1 , \ev_{\Omega_i^{(1)}})$.
\end{proof}

That said, P5 becomes a simple consequence of P4 and P6 (\ie theorem \ref{tadd} and proposition \ref{tredu}):

\begin{cor}\label{trec}
Let $Y_2 \subset \ell^p(\Gamma_2;V)$ be the space induced from $Y_1 \subset \ell^p(\Gamma_1;V)$. Then $\dlp (Y_2,\Gamma_2) = \dlp (Y_1,\Gamma_1)$.
 \end{cor}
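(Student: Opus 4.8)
The corollary is meant to follow by combining reduction (Proposition \ref{tredu}, \ie P4) with additivity (Theorem \ref{tadd}, \ie P6), so the plan is to exhibit the reduced subspace of the induced subspace as a direct sum of copies of $Y_1$. Write $n = [\Gamma_2:\Gamma_1]$ and fix coset representatives $g_1,\dots,g_n$ of $\Gamma_2/\Gamma_1$, so that every $\gamma_2 \in \Gamma_2$ is written uniquely as $\gamma_2 = \gamma_1 g_j$. This yields the standard identification $\ell^p(\Gamma_2;V) \cong \ell^p(\Gamma_1;V^n)$ sending $y$ to $\wh{y}$ with $\wh{y}(\gamma_1 g_j) = y(\gamma_1)(g_j)$; a direct check shows that under this identification the restriction of the $\Gamma_2$-action to $\Gamma_1$ becomes the componentwise left-translation action on $\ell^p(\Gamma_1;V^n) = \ell^p(\Gamma_1;V)^{\oplus n}$.

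The main step is to identify the reduced subspace of $Y_2$. By Definition \ref{dinduc}, an element $\wh{y}$ lies in the induced subspace $Y_2$ precisely when, for each coset $g_j$, the map $\gamma_1 \mapsto \wh{y}(\gamma_1 g_j) = y(\gamma_1)(g_j)$ belongs to $Y_1$. In the reduced picture this says exactly that the $j$-th component of $y$ lies in $Y_1$, for each $j$. Hence the reduced subspace of $Y_2$ is $Y_1^{\oplus n} \subset \ell^p(\Gamma_1;V)^{\oplus n}$, where each summand is a copy of $Y_1$ carrying the left-translation action of $\Gamma_1$. This is the only place where one must unwind the two definitions carefully, and it is the step I expect to be the main (purely bookkeeping) obstacle: one has to keep track of the coset representatives and verify that the $\Gamma_1$-module structure is the diagonal one, so that the direct-sum decomposition is genuinely $\Gamma_1$-equivariant.

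With this identification in hand the computation is immediate. Applying Theorem \ref{tadd} inductively to the $n$ summands gives
\[
\dlp(Y_1^{\oplus n}, \Gamma_1) = n\, \dlp(Y_1,\Gamma_1).
\]
On the other hand, Proposition \ref{tredu} applied to $Y_2$ and its reduced subspace $Y_1^{\oplus n}$ gives
\[
[\Gamma_2:\Gamma_1]\, \dlp(Y_2,\Gamma_2) = \dlp(Y_1^{\oplus n},\Gamma_1).
\]
Combining the two displays yields $n\, \dlp(Y_2,\Gamma_2) = n\, \dlp(Y_1,\Gamma_1)$, and dividing by $n = [\Gamma_2:\Gamma_1]$ gives $\dlp(Y_2,\Gamma_2) = \dlp(Y_1,\Gamma_1)$, as claimed.
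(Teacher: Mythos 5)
Your proof is correct and follows exactly the paper's own route: identify the reduced subspace of the induced space $Y_2$ as the direct sum $\bigoplus_{g \in \Gamma_2/\Gamma_1} Y_1$, then combine additivity (Theorem \ref{tadd}) with reduction (Proposition \ref{tredu}) and divide by the index. The only difference is that you spell out the coset bookkeeping and the $\Gamma_1$-equivariance of the identification, which the paper leaves implicit.
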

\begin{proof}
Let $Y \subset \ell^p(\Gamma_1;V^{[\Gamma_2:\Gamma_1]})$ be defined by $Y =  \oplus_{g \in \Gamma_2 / \Gamma_1} Y_1$. Then $\dlp (Y:\Gamma_1) = [\Gamma_2:\Gamma_1] \dlp (Y_1:\Gamma_1)$ by additivity (P6). However, $Y$ is nothing else than the reduced space from $Y_2$, so by P4, $[\Gamma_2: \Gamma_1] \dlp (Y_2,\Gamma_2) = \dlp (Y,\Gamma_1) = [\Gamma_2:\Gamma_1] \dlp (Y_1,\Gamma_1)$.
\end{proof}
The case of infinite index subgroup would be interesting, but it definitively requires finer arguments than the above.

\section{Invariance}\label{sinv}

The aim of this section is to discuss more in details properties P2, namely give a counterexample in  $\ell^\infty$ and gives two cases where it holds. 

The following counter-example to P2 expands on the problems present in $\ell^\infty$. The author apologizes as this same problem was overlooked in the previous paper \cite[Corollary 3.9]{moi-lp}. Two ingredients are used in this the proof which are not mentioned in the statement: the image has to be closed (so that the inverse is bounded), and the $\ell^\infty$ should be excluded (so that functions of finite support are norm-dense).

In the following example, a continuous map of finite type and closed image (in $\ell^\infty$) is shown not to satisfy P2.  

\begin{ex}\label{cexinv}
Let $c_0(\zz;\kk)$ be the space of sequences tending to $0$ at infinity and $\yperf$ be the closure of the space of periodic sequences, both sitting inside $\ell^\infty(\zz;\kk)$. As shown before, $\dl{\infty} c_0 = \dl{\infty} \yperf = \dl{\infty} \ell^\infty =  1$. Quite trivially, $c_0 \cap \yperf = \{0\}$ and both spaces are closed. Let $Y = c_0 + \yperf$. The map $f:c_0 \oplus \yperf \to Y$ is of finite type and injective. Clearly, $\dl{\infty} Y = 1$ and $f$ maps a space of $\ell^\infty$-dimension $2$ in a space of dimension $1$, a contradiction to P2.

\par The problem lies in the inverse of this map $f$. Actually, if $c_0$ is included in a separable space (such as $Y$, but not $\ell^\infty$) then there exist a projection on $c_0$. The interesting one is the map back to $\yperf$, as  upon restricting to finite subset of $\zz$, this map appears to be the trivial map. To put things briefly, the problem is related to the fact that this $\Gamma$-equivariant map will not be of any ``type'', or, in other words, to the presence in $(\ell^\infty)^*$ of elements whose support lies outside $c_0$.

\par It is possible to describe explicitly the projection from $Y$ onto $\yperf$. This can be done using invariant mean, or directly using a F{\o}lner sequence $\{ \Omega_i \}$ (to make things simple, here $\Omega_i = \{0,1, \ldots ,i\}$). Indeed, let 
\[
\mu_{k,n} (y) = \limm{i \to \infty} \frac{1}{|i|} \sum_{j \in k + n i} y(j).
\]
Then $\mu_{k,n}$ is trivial on $c_0$. To extend it to $\ell^\infty$ one requires ultrafilters, however it is well-defined on $\yperf$ (that is the above limits converge in the usual sense). Thus given an element $y$ in $c_0 + \yperf$ one can reconstruct its periodic part (in a $\Gamma$-equivariant fashion) using the whole collection of $\mu_{k,n}(y)$ (where $n \in \nn$ and $0 \leq k < n$). 
\end{ex}


\par Before moving on, we make some additional results to achieve P2 for more general maps. The first step is to deal with inclusions. 

\begin{lem}\label{tinclusion}
If $Y \subset Y'$ then $\dlp Y \leq \dlp Y'$.
\end{lem}
\begin{proof}
This is an obvious consequence of $\ldm_\eps (X, \pnc) \leq \ldm_\eps (X',\pnc)$ when $X \subset X'$ (and $B^{f(Y)}_1 \subset B^{Y'}_1$). 
\end{proof}

As a consequence, it can now be assumed that $f(Y)= Y'$. The kernel of such maps has already received much attention. 
Compare for example with Ceccherini-Coornaert \cite{CC} work on the Garden of Eden theorem, in particular as they use mean dimension, a concept from which $\ell^p$-dimension stems. Many subtleties will here be avoided as the basic hypothesis is that $f$ is an isomorphism. A problem which clearly arises in example \ref{cexinv} is that the inverse is not of finite type (actually not of any type).

\begin{prop}\label{tinvtypl1}
If $f:Y \to Y'$ is a $\Gamma$-invariant isomorphism whose inverse is of $\ell^1$ type, then $\dlp Y \leq \dlp Y'$.
\end{prop}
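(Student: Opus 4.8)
The plan is to exploit that the inverse $g = f^{-1}$, being of $\ell^1$ type, is a convolution operator bounded on every $\ell^p$ (Young's inequality) which splits into a finite-type part plus a small-norm tail. Write $g$ as convolution by $h \in \ell^1(\Gamma;\homo(V',V))$. Given $\eta>0$, choose a finite $F\subset\Gamma$ with $e_\Gamma \in F$ and $\nr{h\bv_{F^\comp}}_{\ell^1}\leq\eta$, and decompose $g = g_F + g_{F^\comp}$, where $g_F$ is convolution by $h\bv_F$ (finite type) and $\nr{g_{F^\comp}}_{\ell^p\to\ell^p}\leq\eta$. The crucial localisation is that for $\eta\in\Omega$ the value $g_F(y')(\eta)$ involves $y'$ only on the slightly enlarged set $\Omega^+ := \Omega F^{-1}$; hence $R_\Omega\circ g_F$ factors as $\Phi\circ R_{\Omega^+}$ for a linear map $\Phi$ with $\nr{\Phi}_{\ell^p(\Omega^+)\to\ell^p(\Omega)}\leq\nr{h}_{\ell^1}=:C$.

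The heart of the argument is to turn a good subspace for $Y'$ over $\Omega^+$ into a good subspace for $Y$ over $\Omega$. Suppose a width realiser for $(B^{Y'}_{\nr{f}},\ev_{\ell^p(\Omega^+)})$ is given by a codimension-$k'$ subspace $L'\subset\ell^p(\Omega^+;V')$ with $\diam_{\ell^p(\Omega^+)}(L'\cap R_{\Omega^+}B^{Y'}_{\nr{f}})\leq\eps'$. Define $\psi:\ell^p(\Gamma;V)\to\ell^p(\Omega^+;V')/L'$ on $Y$ by $\psi = q\circ R_{\Omega^+}\circ f$ (with $q$ the quotient map) and extend it arbitrarily to the ambient space; its target has dimension $k'$. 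For $y_1,y_2\in\ker\psi\cap B^Y_1$, the images $y'_i = f(y_i)$ lie in $B^{Y'}_{\nr{f}}$ and satisfy $R_{\Omega^+}(y'_1-y'_2)\in L'$, so $\nr{R_{\Omega^+}(y'_1-y'_2)}_{\ell^p(\Omega^+)}\leq\eps'$. Writing $y_1-y_2 = g(y'_1-y'_2) = g_F(y'_1-y'_2)+g_{F^\comp}(y'_1-y'_2)$ and restricting to $\Omega$, the finite-type part is bounded by $C\eps'$ through $\Phi$, while the tail is bounded by $\eta\nr{y'_1-y'_2}_{\ell^p(\Gamma)}\leq 2\eta\nr{f}$. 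Hence $\ev_{\ell^p(\Omega)}(y_1,y_2)\leq C\eps'+2\eta\nr{f}$, yielding the width comparison $\ldm_{C\eps'+2\eta\nr{f}}(B^Y_1,\ev_{\ell^p(\Omega)})\leq\ldm_{\eps'}(B^{Y'}_{\nr{f}},\ev_{\ell^p(\Omega^+)}) = \ldm_{\eps'/\nr{f}}(B^{Y'}_1,\ev_{\ell^p(\Omega^+)})$, the last equality by homogeneity of $\ev_{\ell^p(\Omega^+)}$.

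It remains to pass to the limit. Dividing by $|\Omega_i|$ and taking $\limsup_i$, I would use that $\{\Omega_i^+\}$ is again a F{\o}lner sequence with $|\Omega_i^+|/|\Omega_i|\to1$ (the enlargement $\Omega_i F^{-1}\setminus\Omega_i$ is controlled by the $F^{-1}$-boundary), together with the independence of $\dlp$ on the F{\o}lner sequence (\cite[Corollary 5.2]{moi-lp}); this makes $\limsup_i|\Omega_i|^{-1}\ldm_{\eps'/\nr{f}}(B^{Y'}_1,\ev_{\ell^p(\Omega_i^+)})$ equal to the pre-limit quantity for $Y'$ at scale $\eps'/\nr{f}$. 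Letting $\eps'\to0$ and $\eta\to0$ simultaneously, the left threshold $C\eps'+2\eta\nr{f}\to0$ while the right-hand side tends to $\dlp Y'$; since the pre-limit function is monotone decreasing in $\eps$, the limit along any such sequence of thresholds tending to $0$ computes $\dlp Y$, giving $\dlp Y\leq\dlp Y'$.

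I expect the main obstacle to be the bookkeeping of the two independent small parameters $\eps'$ (the width tolerance) and $\eta$ (the $\ell^1$-tail of $h$): one must arrange the enlargement $\Omega^+$, the constant $C$, and the factor $\nr{f}$ so that the double limit $\eps',\eta\to0$ can be taken cleanly, which rests on the monotonicity of $\ldm_\eps$ in $\eps$ and on the fact that the enlargement changes cardinalities only by a factor tending to $1$. The essential point where $\ell^1$ (rather than merely $\ell^{p'}$) is used is Young's inequality, guaranteeing that the convolution tail $g_{F^\comp}$ is small in the operator norm on $\ell^p$ for every $p$ uniformly.
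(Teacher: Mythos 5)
Your proposal is correct and takes essentially the same route as the paper's proof: truncating the $\ell^1$ kernel of $g=f^{-1}$ to a finite set, bounding the tail in operator norm by Young's inequality, using the locality of the truncated convolution to trade the width of $B^{Y}_1$ over $\Omega$ against the width of $B^{Y'}_1$ over a slightly enlarged set (your $\Omega F^{-1}$ playing the role of the paper's $\FE_{A_\delta}\Omega$), and then absorbing the enlargement in the F{\o}lner limit. The only divergence is cosmetic bookkeeping at the end — the paper passes to the limit via Helly's selection principle (the functions $l^Y$, $l^{Y'}$ from the additivity proof), whereas you use monotonicity of $\ldm_\eps$ in $\eps$; for that step you only need $\lims{i} |\Omega_i|^{-1}\ldm_{\eps'/\nr{f}}(B^{Y'}_1,\ev_{\ell^p(\Omega_i^+)}) \leq \dlp Y'$, which follows from monotonicity and the fact that $\{\Omega_i^+\}$ is again F{\o}lner, rather than your stronger (and not obviously justified at fixed $\eps'$) claim that the pre-limit quantities along the two sequences coincide.
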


\begin{proof}
Suppose $h: B^{Y'}_1 \to \kk^k$ is a map realizing $\ldm_\eps(B^{Y'}_1,\ev_{\ell^p(\Omega)}) = k$. Then $h \circ f$ is a logical candidate to show that $\ldm$ of $B^Y_1$ must be $\leq k$. However, $ \ker h \circ f = f^{-1} \ker h$. Consequently, we are looking for a bound on $\| y \|_{\ell^p(\Omega)}$ given that $\| f(y) \|_{\ell^p(\Omega)}$ is small. Roughly, if $g:Y' \to Y$ is the inverse of $f$, then a bound on the norm of $g$ is sought.

Thus, let $a \in \ell^1(\Gamma;\homo(V_1,V_2))$ be the element realizing the convolution describing $g$. Let $A_\delta \subset \Gamma$ be such that $\|a\|_{\ell^1(A_\delta)} \geq \|a\|_{\ell^1(\Gamma)}-\delta$ (hence $\|a\|_{\ell^1(A_\delta^\comp)} \leq \delta$). Let $g_\delta$ be the convolution by the restriction of $a$ to $A_\delta$. Young's inequality gives that $\| g - g_\delta \|_{\ell^p \to \ell^p} \leq \| a \|_{\ell^1(A_\delta^\comp)} \leq \delta$. Furthermore,
\[
\|g_\delta (y') \|_{\ell^p(\Omega)} \leq \|g_\delta \|  \|y'\|_{\ell^p( \FE_{A_\delta} \Omega)} \leq (\|g\| + \delta)  \|y'\|_{\ell^p( \FE_{A_\delta} \Omega)}
\]
where $\FE_A \Omega = \Omega \cup \del_A \Omega$ is the $A$-closure of $\Omega$. Putting everything together gives
\[
\begin{array}{r@{\, \leq \,}l}
\| g(y') \|_{\ell^p(\Omega)} 
  & \| g(y') - g_\delta(y') \|_{\ell^p(\Omega)} + \|g_\delta (y') 
\|_{\ell^p(\Omega)}  \\
  & \| g(y') - g_\delta(y') \|_{\ell^p(\Gamma)} + \|g_\delta \| \cdot \|y'\|_{\ell^p( \FE_{A_\delta} \Omega)} \\
  & \delta + (\|g\| +\delta) \|y'\|_{\ell^p( \FE_{A_\delta} \Omega)} \\
\end{array}
\]
This, and the fact that $f(B^Y_1) \subset B^{Y'}_{\|f\|}$, shows that, 
\[
\ldm_{\delta + (\|g\| +\delta) \eps } (B^Y,\ev_{\ell^p(\Omega)}) \leq \ldm_{\eps /\| f\|} (B^{Y'}_1 , \ev_{\ell^p(\FE_{A_\delta} \Omega)}).
\]
Upon taking the limit along $\{\Omega_i\}$, this implies that $l^Y( \delta + (\|g\| +\delta) \eps) \leq l^{Y'}( \eps /\| f\| )$ ($l^Y$ are obtained by Helly's selection principle, see the proof of theorem \ref{tadd}). As this last inequality holds for any $\delta \in ]0,\tfrac{1}{2}[$ and $\eps > 0$, the conclusion follows.
\end{proof}
The previous result hinges only on a approximation of the map $g$ in operator norm. Note that approximations in the strong operator topology also hold if $g$ is of $\ell^p$ type (for $1<p<\infty$) and the group is amenable thanks to a result of Cowling \cite{Cow}. This might enable an extension of the previous result, but the weak operator topology seems insufficient. What is however simple, is that it extends to the norm closure (for the operator norm $Y \to Y'$) of maps of finite type, \ie pseudo-functions.

The argument of proposition \ref{tinvtypl1} requires boundedness of the inverse to construct a linear map. However, only a linear subspace is really necessary.
\begin{teo}\label{tinv-new}
Let $Y, Y' \subset \mysp(\Gamma;V)$ where $\mysp \neq \ell^\infty$. If $f:Y \to Y'$ is a $\Gamma$-invariant isomorphism of finite type, then $\dlp Y \leq \dlp Y'$.
\end{teo}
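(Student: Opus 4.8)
The plan is to mimic Proposition \ref{tinvtypl1} but to replace the bounded, $\ell^1$-type inverse by a finite-dimensional construction on each F{\o}lner window, exploiting the remark that computing $\ldm$ only requires exhibiting a linear subspace of the right codimension. By Lemma \ref{tinclusion} one first reduces to the case where $f:Y\to Y'$ is bijective; write $g=f^{-1}$, which is bounded and $\Gamma$-equivariant, so that $\|f(y)\|_{\ell^p(\Gamma)}\ge C^{-1}\|y\|_{\ell^p(\Gamma)}$ for all $y\in Y$, with $C=\|g\|$. Since $f$ is of finite type, its convolution kernel $h$ has finite support; fix a finite symmetric $T\ni e$ containing $\SUP(h)$ and, for a set $\Omega$, put $\Omega^+=\FE_T\Omega$. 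The two locality facts I would use are that $f(y)\bv_\Omega$ depends only on $y\bv_{\Omega^+}$, and, dually, that $f(y)\bv_{(\Omega^+)^\comp}$ depends only on $y\bv_{\Omega^\comp}$, together with the bound $\|f(y)\|_{\ell^p(E)}\le\|h\|_{\ell^1}\|y\|_{\ell^p(\FE_T E)}$ (Young, valid for every $p$ because $h$ has finite support).

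Fix a F{\o}lner set $\Omega$ and let $\phi':Y'\to\kk^{k'}$ realise $\ldm_{\eps'}(B^{Y'}_1,\ev_{\ell^p(\Omega^+)})=k'$, so $\diam_{\ev_{\ell^p(\Omega^+)}}(\ker\phi'\cap B^{Y'}_1)\le\eps'$. I would take as a candidate witnessing map for $Y$ on $\Omega$ the pair $\Xi=(\phi'\circ f,\,R_{\del_T\Omega}):Y\to\kk^{k'}\oplus\ell^p(\del_T\Omega;V)$, where $R_{\del_T\Omega}$ is restriction to the collar $\del_T\Omega$, of cardinality $o(|\Omega|)$ by the F{\o}lner property. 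The target of $\Xi$ has dimension $k'+|\del_T\Omega|\dim_\kk V=k'+o(|\Omega|)$, and for $y\in\ker\Xi\cap B^Y_1$ one has $\|f(y)\|_{\ell^p(\Omega^+)}\le\|f\|\,\eps'$ together with $y\bv_{\del_T\Omega}=0$. Thus, writing $a^Y(\Omega,\eps)=\ldm_\eps(B^Y_1,\ev_{\ell^p(\Omega)})$ as in Theorem \ref{tadd}, it suffices to show that $\|y\|_{\ell^p(\Omega)}$ is small for such $y$, for this gives $a^Y(\Omega,\eps'')\le a^{Y'}(\Omega^+,\eps')+o(|\Omega|)$, hence $\dlp Y\le\dlp Y'$ after dividing by $|\Omega|$, letting $i\to\infty$ along the F{\o}lner sequence, and finally $\eps',\eps''\to0$. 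Everything therefore reduces to estimating the \emph{local-kernel directions}: those $y\in Y$ for which $\|f(y)\|_{\ell^p(\Omega^+)}$ is small while $\|y\|_{\ell^p(\Omega)}$ is not.

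The bounded inverse is what controls these. Since $\|f(y)\|_{\ell^p(\Gamma)}\ge C^{-1}\|y\|$, a local-kernel direction must carry a definite fraction of the mass of $f(y)$ \emph{outside} $\Omega^+$, and the second locality fact then pushes that mass back onto $y\bv_{\Omega^\comp}$:
\[
\|y\|_{\ell^p(\Omega^\comp)}\ge \frac{1}{\|h\|_{\ell^1}}\,\|f(y)\|_{\ell^p((\Omega^+)^\comp)}\ge \frac{1}{\|h\|_{\ell^1}}\big(C^{-p}-(\eps')^{p}\big)^{1/p}\|y\|_{\ell^p(\Gamma)}.
\]
It is exactly here that the hypothesis $\mysp\neq\ell^\infty$ is indispensable. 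In $\ell^p$ with $p<\infty$, or in $c_0$, an element cannot carry uniformly spread mass, so the compensating mass of a local-kernel direction is forced to localise near the boundary of $\Omega$, where the constraint $y\bv_{\del_T\Omega}=0$ (suitably thickened) can act on it; the counterexample \ref{cexinv}, in which the periodic tail spreads the mass of $f(y)$ over all of $\zz$, is precisely the $\ell^\infty$ configuration in which the local kernel has dimension $\Theta(|\Omega|)$ and P2 genuinely fails.

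The step I expect to be the main obstacle is upgrading the factor $\big(1-(C^{-1}/\|h\|_{\ell^1})^{p}\big)^{1/p}<1$ coming from the displayed inequality — which only shows a single-window contraction, not smallness — into a genuine $o(|\Omega|)$ bound on the number of local-kernel directions of large $\ell^p(\Omega)$-mass, uniform enough to survive averaging (naive iteration over nested windows multiplies the $Y'$-count and destroys the sharp constant). I would handle this exactly as in the proof of additivity, Theorem \ref{tadd}: check that $a^Y(\Omega,\eps)$ is $\Gamma$-invariant, monotone in $\eps$, $K$-sublinear and subadditive, pass by Helly's selection principle to a subsequence along which the normalised widths converge, and propagate the single-window comparison through a $\delta$-quasi-tiling (Lemma \ref{paciter}) so that the collars and the leakage defects accumulate only on the $o(|\Omega|)$ uncovered and boundary regions. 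Finally, I would record why the tempting shortcut fails: representing $g$ on $Y'$ as convolution by the $\ell^{p'}$-kernel dual to the bounded functional $z\mapsto g(z)(e)$ (well defined because $(\ell^p)^*=\ell^{p'}$ and $(c_0)^*=\ell^1$, but \emph{not} in $\ell^\infty$, cf.\ Lemma \ref{ttypl1}) only yields an $\ell^p\to\ell^\infty$ bound, whose tail estimate picks up a factor $|\Omega|^{1/p}$ and is too weak to control the $\ell^p(\Omega)$-sum — which is why the finite-dimensional subspace construction, rather than an approximate inverse, seems to be the right tool.
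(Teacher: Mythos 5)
Your opening move --- ``write $g=f^{-1}$, which is bounded'' --- assumes exactly the hypothesis that theorem \ref{tinv-new} is designed to dispense with. The subspaces $Y,Y'$ are arbitrary $\Gamma$-invariant subspaces, not assumed closed, so the open mapping theorem does not apply and nothing makes $f^{-1}$ continuous; the Apology in the introduction identifies the flaw of \cite[Corollary 3.9]{moi-lp} as precisely the implicit assumption of a closed image ``so that the inverse is bounded'', and this theorem is advertised as the replacement that works without it. With $C=\|g\|<\infty$ granted, your single-window inequality is fine, but you have then proved a variant of the corrected old statement (intermediate between proposition \ref{tinvtypl1} and the theorem), not the theorem itself. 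The paper's proof never invokes continuity of $f^{-1}$: it only uses that $f$ is a linear bijection in order to pull the witness subspace back, $L=f^{-1}(L')$, which is pure linear algebra.

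The second, and decisive, gap is the one you flag yourself: your displayed estimate shows that a local-kernel direction carries a definite fraction of its mass outside $\Omega$, but this gives no upper bound on $\|y\|_{\ell^p(\Omega)}$, and the obstruction is per-element rather than an averaging defect --- a single $y\in\ker\Xi\cap B^Y_1$ with vanishing collar could a priori keep unit mass on $\Omega$ --- so the quasi-tiling machinery of theorem \ref{tadd}, which only redistributes window estimates, cannot supply the missing bound. Nor can truncation rescue it: the lower bound $\|f(z)\|\geq C^{-1}\|z\|$ holds only for $z\in Y$, and the truncation of $y$ to the interior of $\Omega$ leaves $Y$, where $f$ (a convolution defined on all of $\mysp$) may be nearly singular. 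The paper's proof replaces all of this by a normalization trick: for $y\in L\cap B^Y_1$ the \emph{nonlinear} map $y\mapsto \|y\|\,f(y)/\|f(y)\|$ lands in $L'\cap B^{Y'}_1$, hence $\|y\|\,\|f(y)\|_{\ell^p(\Omega)}/\|f(y)\|<\eps$; and when $y$ is supported in $\IN_A\Omega$, the finitely supported kernel forces $f(y)$ to be supported in $\Omega$, the quotient collapses to $\|y\|$ itself, and one gets $\|y\|<\eps$ outright --- no inverse norm, no tiling. Finally, the hypothesis $\mysp\neq\ell^\infty$ enters elsewhere than where you place it: a limiting argument (resting on density of finitely supported elements, which fails in $\ell^\infty$) shows that if $f$ had nontrivial kernel in $L+\ell^p\big((\IN_A\Omega)^\comp\big)$ for all large F{\o}lner sets, it would have nontrivial kernel meeting $Y$, contradicting injectivity; this is what keeps the codimension of the candidate subspace at $k_\Omega+|\del_A\Omega|\dim_\kk V$ and lets the width comparison close.
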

\begin{proof}
Assume that $\ldm_\eps (B^{Y'}_1,\ev_{\ell^p(\Omega)}) =k_\Omega$. Let $L'$ be a linear subspace of $Y'$ such that $\big( L'+\ell^p(\Omega^\comp) \big)\cap \img f$ is of codimension $k$ (in $\img f$) and $\forall y \in B^{Y'}_1 \cap L', \|y\|_{\ell^p(\Omega)} < \eps$. Since $f$ is injective and linear, there exists a linear subspace $L \subset Y$ such that $f(L) = L'$. Suppose $F$ is the support of the convolution defining $f$. If for all large enough sets $\Omega$ in a F{\o}lner sequence $\ker f \cap (Y + \ell^p(\Omega^\comp))$ is non-empty, then (unless $\mysp = \ell^\infty$) $\ker f \cap Y \neq \vide$. Thus, for $\Omega$ large enough (and $\IN_A \Omega = \Omega \setminus \del_A \Omega$, $f$ has no kernel in $L + \ell^p\big ((\IN_A \Omega)^\comp \big)$, and $L$ will be our candidate with codimension less than $k_\Omega + |\del_A \Omega|  \dim V$.

To show the norm estimate on such elements, a (non-linear) map will come handy.
Let $g(0)=0$ and, for $y \neq 0$, $g(y) = \frac{f(y)}{\|f(y)\|} \|y\|$. This maps sends $L \cap B^Y_1$ to $L' \cap B^{Y'}_1$. So if $y \in L \cap B^Y_1$ then $g(y) \in L'\cap B^{Y'}_1$ and so $\|g(y)\|_{\ell^p(\Omega)} < \eps$. Furthermore, $y$ can be assumed to be supported on $\ell^p(\IN_A \Omega)$, and so,
\[
\eps > \frac{\|f(y)\|_{\ell^p(\Omega)}}{\|f(y)\|} \|y\| = \|y\|_{\ell^p(\FE_A \Omega)}.
\]
Thus, $\ldm_\eps (B^Y_1, \ev_{\ell^(\IN_A \Omega)}) \leq  \ldm_\eps (B^Y_1, \ev_{\ell^(\IN_A \Omega)}) + |\del_A \Omega| \dim V$, which yields the conclusion.
\end{proof}
There are continuous linear maps of finite type without inverse of $\ell^1$ type (or pseudo-functions) which means that theorem \ref{tinv-new} adds to proposition \ref{tinvtypl1}. It is not obvious to the author if there are continuous linear maps $f$ whose inverse $f^{-1}$ is a pseudo-function and who are not themselves of finite or $\ell^1$ type.

\section{Further remarks and questions}\label{sfin}

Question \ref{laquestion} admits a very simple answer in the case of $\ell^1$ (actually, for any group $\Gamma$), due to the peculiar behaviour of increasing sequences whose union is norm dense.
\begin{rem}\label{repl1}
Indeed, from the fact that $\srl{\cup Y_k} = \ell^1$, there is a also sequence $y_k \in Y_k$ which tends (with $\ell^1$-convergence) to $\delta_e$. Let $\eps_k = \|\delta_e-y_k\|_{\ell^1}$. This sequence allows a uniform weak approximation of the elements of $\ell^1$ by elements in the $Y_k$. Indeed, let $z \in \ell^1$, then the convolution $z * y_k$ lies in $Y_k$ (by closedness of $Y_k$, $\Gamma$-invariance of $Y_k$, norm density of $\delta_e$ and its translates in $\ell^1$ and Young's inequality). Furthermore, $\forall \alpha \in \ell^\infty$, 
\[
|\pgen{ \alpha , z * y_k} - \pgen{ \alpha, z}| = |\pgen{ \alpha , z * (y_k - \delta_e)}| \leq \|\alpha\|_{\ell^\infty} \|z * (y_k - \delta_e)\|_\ell^1 \leq \eps_k \|\alpha\|_{\ell^\infty} \| z\|_\ell^1.
\]
Take $y_k^* \in Y_k^\perp$, $\|y_k^*\|_{\ell^\infty} = \sup_{\|z\|_{\ell^1} = 1} |\pgen{y_n^*,z}|$. Taking some $z'$ of norm $1$ (in $\ell^1$) which is $\eps_k$-close to the supremum and using the above-mentioned uniform weak approximation $\tilde{y}_k = z' * y_k$,
\[
(1-2\eps_k)\|y_k^*\|_{\ell^\infty} \leq |\pgen{y_k^* , \tilde{y}_k}| =  0, 
\]
since $\pgen{y_k^* , \tilde{y}_k} =0$. Consequently, $y_k^* =0$ as soon as $k$ is large enough. This implies that there is actually a $k$ for which $Y_k = \ell^1$ and in particular that $X \cap Y_k \neq \{0\}$. \hfill $\Diamond$
\end{rem}

It would be nice if this argument could be adapted to answer question \ref{laquestion} in $\ell^p$ with $p \in ]1,\infty[$. Indeed, it seems somehow strange to have a sequence $y_k \to_k \delta_e$ (where $y_k \in Y_k$) and $x_{n,k}^\perp + y_{n,k}^\perp \to_n \delta_e$ (where $x_{n,k}^\perp \in X^\perp$ and $y_{n,k}^\perp \in Y_k^\perp$) while $\cap Y_k^\perp =\{0\}$. The problem lies in the fact that, though the norm of $x_{n,k}^\perp + y_{n,k}^\perp$ is (obviously) bounded, the author failed to find a reason forcing the norm of $x_{n,k}^\perp$ to be bounded. If this would be the case, then question \ref{lareponse} would have an easy answer.

The first questions concerns the properties that are not determined here.

\begin{ques}\label{qp10}
Does P8' hold for some $p\neq 2$? Does P10 holds for $p =1$, \ie are closed spaces of full $\ell^1$-dimension of finite codimension?
\end{ques}


\par It seems quite probable that P9' holds for $\ddl{\infty}$ in $c_0$ even though P8' is false for $\dl{1}$ in $\ell^1$. The point being that for a space $Y \subset c_0$, its annihilator $Y^\perp$ is weak$^*$-closed. As such, this excludes the spaces constituting the main counterexample to P8'. In other words,

\begin{ques}\label{qp9'}
If $Y_n$ is a weak$^*$-closed sequence of decreasing subspaces in $\ell^1$ such that $\cap Y_n = \{0\}$, does $\dl{1} Y_n \to 0$?
\end{ques}

\par Indeed, the dimension constructed here being defined by looking at increasing sequences of finite dimensional spaces, it should behave correctly only in a weak sense. 

Thanks to a conversation with B.Nica, the author realised there is an strong obstruction to positivity for $p>2$.
\begin{rem}\label{paspos}
It is not too hard to devise arguments which show that if P7 is true, then the algebraic zero divisor conjecture holds if and only if the $\ell^p$ analytic version holds (see \cite{Linn}). Indeed, if $\dlp Y$ is positive, then it is possible to construct an element which vanishes on the boundary of a F{\o}lner set. Since it holds in $\ell^2$ by \cite{El-zd}, it also holds in $\ell^p$ for $p \in [1,2]$. So given the actual properties, there is no new result obtained. However, if one would have positivity for $p> 2$, then there would be a contradiction, as it is known (see \cite{Puls-zd1}) that it does not even hold for the Abelian groups $\zz^k$ (where $k$ is large enough). \hfill $\Diamond$
\end{rem}

%
%

\par Given the recent work of B.Hayes \cite{Hay}, another valuable question is to ask whether the two definitions are equal. Furthermore, it might be true that Hayes' $\ell^p$ dimension coincide with $\ddlp$. Here is another small step in this direction. In the paper of Ioffe and Tikhominov \cite[\S 4]{IT} one sees 4 (more or less) classical notions of width coming up. In the present formulation, they could be rephrased, for $X$ a subset of a (pseudo-)normed linear space, as 
\[
\begin{array}{ll}
\mathrm{bdim}_\eps X = \sup \{ k \mid \exists L^{k}, B_\eps \cap L^k \subset X \} &
\ldm_\eps X =  \inf \{ k \mid \exists L^{-k}, L^{-k} \cap X \subset B_\eps \}  \\
\mathrm{tdim}_\eps X = \sup \{ k \mid \exists L^{-k}, L^{-k} + X \supset B_\eps \} &
\mathrm{cdim}_\eps X = \inf \{ k \mid \exists L^{k}, B_\eps + L^k \supset X \} 
\end{array}
\]
where $L^k$ denotes a subspace of dimension $k$ and $L^{-k}$ a subspace of codimension $k$. Changing $\ldm$ by one of the other three numbers does not seem change things so drastically (though, if in some cases sub-additivity is straightforward with the other it might become sup-additivity). The above question (using polars) would probably follow if these four numbers agree. Some inequalities are relatively easy to get: $\mathrm{bdim}_{2\eps} X \leq \ldm_\eps X \leq \mathrm{cdim}_{\eps/2} X$.

\begin{ques}\label{q:egal}
Does $\ddlp$ coincide with Hayes' $\ell^p$ dimension (see \cite{Hay}) and/or by the replacement of $\, \ldm_\eps$ by one of the above widths?
\end{ques}

\par In $\ell^p$ there are uncomplemented spaces. However the examples of such spaces are quite irregular (for a simplified proof of Sobczyk's result, see Tomczak-Jaegermann's book \cite[p.252]{TJ}). Thence:
\begin{ques}
Are all $\Gamma$-invariant closed subspaces $Y \subset \ell^p(\Gamma;V)$ complemented? 
\end{ques}
\par Many partial answers exist. 
Among many, note that Liu, van Rooij and Wang \cite{LRW} and Bekka \cite{Bek} exhibit an intimate connection between the existence of projections and the presence of bounded approximate identities. Such results would be interesting in $\ell^p$. 
H.P.~Rosenthal \cite{Ros} basically solves the question for the Abelian case if $1<p<2$: $A$ is complemented in $L^p(\Gamma)$ if and only if $A$ is the set of all elements whose Fourier transforms vanish almost everywhere outside of some measurable set $E$ of $\hat \Gamma$, and $\widehat{L^p(\Gamma)}$ is closed under (pointwise) multiplication by the characteristic function $\chi_E$. Note also that, by \cite[Lemma 3.1]{Ros} if there is a projection and $1<p<\infty$, there is automatically a $\Gamma$-equivariant projection ($\Gamma$ arbitrary). 



\par A proof of additivity would be much simpler if one could have something in the flavour of lemma \ref{taddl2} (which in some sense only involves a decomposition in a direct sum) for any $p$.

\begin{ques}
For which $p\in [1,\infty]$ (if any outside $p=2$) does there exist a constant $ C>1$ such that given $Y_1$,$Y_2 \subset \ell^p$ of finite dimension, $\forall L \subset Y_1 \oplus Y_2$, $\exists L_i \subset L$ and a map $\pi_i: L_i \to Y_i$ satisfying

(a) $L_1 \cap L_2 = \{0\}$ and $L_1 + L_2 = L$.

(b) $\forall l \in L_i, \tfrac{1}{C} \| \pi_i l \|_p \leq \|l\|_p \leq C \| \pi_i l\|_p$.
\end{ques}



Finally, there would be a somehow more intuitive (and probably much weaker) $\ell^p$-dimension if the aim is to answer \ref{laquestion}. Remember that to define von Neumann dimension of a closed $\Gamma$-invariant subspace $X \subset \ell^2(\Gamma)$, simply take $P_X$ the projection on $X$. Then $P_X$ is $\Gamma$-equivariant, \ie $\gamma \cdot (P_X f) = P_X (\gamma \cdot f)$ for any $f \in \ell^2(\Gamma)$. Define the von Neumann dimension of $X$ to be $\tau(P_X) = \pgen{\delta_e,P_X \delta_e}$. Four facts are needed to conclude: 
\begin{enumerate}\renewcommand{\labelenumi}{{\normalfont T\arabic{enumi}}}
\item $\tau(P_X) \in [0,1]$ (since $P_X \delta_e$ is the closest point to $\delta_e$ in $X$ and projections reduce norm).
\item $\tau(P_{Y_k}) \to 1$ if $Y_k$ is an increasing sequence of subspaces whose reunion is dense (as one gets always closer to $\delta_e$, \ie there is a sequence $y_k \in Y_k$ with $y_k \to \delta_e$).
\item $\tau(P_X)=0$ if and only if $X= \{0\}$. 
\item If $X \cap Y = \{0\}$ then the direct sum $X + Y$ (inside $\ell^2(\Gamma)$) has dimension $\tau(P_{X + Y}) = \tau(P_X) + \tau(P_Y)$.
\end{enumerate}
Thus, if $Y_k \cap X = \{0\}$ for all $k$, the spaces $X + Y_k$ get eventually of dimension bigger than $1$, a contradiction. Thus, there is a $k$ for which $Y_k \cap X \neq \{0\}$. 

Now, T1 and T2 are very straightforward. For T3, observe that the Dirac mass and its translates are dense in $\ell^2(\Gamma)$ and $P_X$ is $\Gamma$-equivariant, so $P_X \delta_e =0$ if and only if $X= \{0\}$. Again, as $P_X \delta_e$ is the closest point to $\delta_e$ in $X$, $\tau(P_X)$ must be positive. This motivates the following definition. Le $X$ be a closed $\Gamma$-invariant subspace of $X \subset \ell^p(\Gamma)$ where $1<p<\infty$. There is not necessarily a projection on $X$. However, there is a trick: Let $B^X_1$ be the unit ball in $X$, then there is a nearest point projection $P_X:B^{\ell^p}_1 \to B^X_1$ (see \cite[Section 2.2, page 40]{BL}). Let $D(X) = \pgen{ \delta_e , P_X \delta_e}$. The three easy facts are obtained again (with basically the same arguments):
\begin{enumerate}\renewcommand{\labelenumi}{{\normalfont D\arabic{enumi}}}
\item $D(X) \in [0,1]$.
\item $D(Y_k) \to 1$ if $Y_k$ is an increasing sequence of subspaces whose reunion is dense.
\item $D(X)=0$ if and only if $X= \{0\}$. 
\end{enumerate}
One also has very cheaply that $D(X) = 1$ if and only if $X = \ell^p(\Gamma)$. So this brings the following
\begin{ques}
Does there exist a function $f:[0,1] \times [0,1] \to [0,2]$ such that, if $X \cap Y = \{0\}$ then $f(D(X),D(Y)) \leq f(D(X+Y))$ and $\forall x \in ]0,1]$ there is an $\eps>0$ such that $f(x,1-\eps) > 1$?
\end{ques}
Obviously, since the definition holds for any group, it can not have normal additivity (see introduction of \cite{moi-lp}). But some non-linear monotonicity as above is sufficient to get an answer to question \ref{lareponse} (and is insufficient to contradict anything). Note further that $\tau(P_X) = \|P_x \delta_e\|^2_{\ell^2}$. Lemma \ref{tracelp} could be of help to study $D(X)$. One could define alternatively define the ``dimension'' as $N(X) = \|P_X \delta_e\|^p_{\ell^p}$. This is probably not much of a change since, lemma \ref{tracelp} relates these two quantities by $N(X) = D(X)^p + (1-D(X))^{p-1} D(X)$.

%
%


\end{document}